\documentclass[reqno]{amsart}
\usepackage{graphicx}
\usepackage{amstext}
\usepackage{amssymb}
\usepackage{amsmath}
\newtheorem{theorem}{Theorem}[section]
\newtheorem{lemma}[theorem]{Lemma}
\newtheorem{corollary}[theorem]{Corollary}
\newtheorem{proposition}[theorem]{Proposition}
\theoremstyle{definition}
\newtheorem{definition}[theorem]{Definition}

\theoremstyle{remark}
\newtheorem{remark}[theorem]{Remark}
\numberwithin{equation}{section}

\begin{document}

\title{STABLE SELF-SIMILAR BLOW-UP DYNAMICS FOR SLIGHTLY $L^2$-SUPERCRITICAL GENERALIZED KDV EQUATIONS}\def\rightmark{BLOW-UP FOR SUPERCRITICAL GKDV EQUATIONS}

\author{Yang LAN}

\address{Laboratoire de Mathematiques D'Orsay, Universite Paris-Sud, Orsay, France}

\email{yang.lan@math.u-psud.fr}

\keywords{KdV, Supercritical, Self-similar, Blow-up}

\begin{abstract}
In this paper we consider the slightly $L^2$-supercritical gKdV equations $\partial_t u+(u_{xx}+u|u|^{p-1})_x=0$, with the nonlinearity $5<p<5+\varepsilon$ and $0<\varepsilon\ll 1$ . We will prove the existence and stability of a blow-up dynamics with self-similar blow-up rate in the energy space $H^1$ and give a specific description of the formation of the singularity near the blow-up time.
\end{abstract}

\maketitle

\section{Introduction}

\subsection{Setting of the problem}
We consider the following gKdV equations:
\begin{equation}\label{CP}\begin{cases}
\partial_t u +(u_{xx}+u|u|^{p-1})_x=0, \quad (t,x)\in[0,T)\times\mathbb{R},\\
u(0,x)=u_0(x)\in H^1(\mathbb{R}),
\end{cases}
\end{equation}
with $1\leq p<+\infty$.

From the result of C. E. Kenig, G. Ponce and L. Vega \cite{KPV} and N. Strunk \cite{S}, \eqref{CP}  is locally well-posed in $H^1$ and thus for all $ u_0\in H^1$, there exists a maximal lifetime $0<T\leq+\infty$ and a unique solution $u(t,x)\in C([0,T), H^1(\mathbb{R}))$ to \eqref{CP}. Besides, we have the blow-up criterion: either $T=+\infty$ or $T<+\infty$ and $\lim_{t\rightarrow T}\|u_x(t)\|_{L^2}=+\infty$.

\eqref{CP} admits 2 conservation laws, i.e. the mass and energy:
\begin{gather*}
M(u(t))=\int |u(t,x)|^2 dx=M(u(0)),\\
E(u(t))=\frac{1}{2}\int |u(t,x)|^2 dx-\frac{1}{p+1}\int |u(t,x)|^{p+1} dx=E(u(0)).
\end{gather*}

For all $ \lambda>0$, $u_{\lambda}(t,x)=\lambda^{\frac{2}{p-1}}u(\lambda^3t,\lambda x)$ is also a solution which leaves the Sobolev space $\dot{H}^{\sigma_c}$ invariant with the index:
\begin{equation}
\sigma_c=\frac{1}{2}-\frac{2}{p-1}.
\end{equation}

We introduce the  ground state $\mathcal{Q}_p$, which is the unique radial nonnegative function with exponential decay at infinity to the following equation:
\begin{equation}
\mathcal{Q}_p''-\mathcal{Q}_p+\mathcal{Q}_p|\mathcal{Q}_p|^{p-1}=0.
\end{equation}
$\mathcal{Q}_p$ plays a distinguished role in the analysis. It provides a family of travelling wave solutions:
$$u(t,x)=\lambda^{\frac{2}{p-1}}\mathcal{Q}_p(\lambda(x-\lambda^2 t-x_0)),\quad (\lambda,x_0)\in\mathbb{R}_{+}^*\times\mathbb{R}.$$

For $p<5$ or equivalently $\sigma_c<0$, \eqref{CP} is subcritical in $L^2$. The mass and energy conservation laws imply that the solution is always global and bounded in $H^1$. So a necessary condition for the occurrence of blow-up is $p\geq 5$. For $p=5$, the blow up dynamics have been studied in a series of papers of Y. Martel, F. Merle and P. Rapha\"{e}l in \cite{M1,MM1,MM2,MM4,MM5,MM3,MMR1,MMR2,MMR3}.

\subsection{On  the supercritical problem}
Let us first consider the focusing $L^2$ supercritical NLS equations:
$$\begin{cases}
i\partial_t u +\Delta u+|u|^{p-1}u=0, \quad (t,x)\in[0,T)\times\mathbb{R}^d,\\
u(0,x)=u_0(x)\in H^1(\mathbb{R}^d),
\end{cases}$$
with nonlinearity $p>1+\frac{4}{d}$. In \cite{MRS1}, F. Merle, P. Rapha\"{e}l and J. Szeftel  show that for $d\geq 2$, there are radial solutions which blow up on an asymptotic blow-up sphere instead of a blow-up point. And in \cite{MRS}, F. Merle, P. Rapha\"{e}l and J. Szeftel construct a stable self-similar blow-up dynamics for slightly $L^2$-supercritical nonlinearity, with nonradial initial data in low dimension (i.e. $d\leq 5$).

Now let us return to the gKdV equations. In this paper we consider the slightly supercritical case:
$$5<p<5+\varepsilon,\quad 0<\varepsilon\ll1.$$

The explicit description of blow-up dynamics for supercritical gKdV equations is mostly open. But numerical simulation of D. B. Dix and W. R. McKinney \cite{DM} suggests that there are self-similar blow-up solutions to supercritical gKdV equations%
\footnote{We know from \cite{MM3} that there are no self-similar blow-up solutions for the $L^2$-critical gKdV equation.}%
. We can expect a similar result to the slightly supercritical Schr\"{o}dinger equations, i.e. \cite{MRS}. More precisely, we expect a blow-up solution of the following form:
$$u(t,x)\sim\frac{1}{\lambda(t)^{\frac{2}{p-1}}}P(\frac{x}{\lambda(t)}),\quad \lambda(t)\sim \sqrt[3]{T-t}.$$
But here the delicate issue is that the profile $P$ seems not to be provided by the ground state $\mathcal{Q}_p$.  If we explicitly let:
$$u(t,x)=\frac{1}{\lambda(t)^{\frac{2}{p-1}}}Q_b(\frac{x}{\lambda(t)}),\quad \lambda(t)=\sqrt[3]{3b(T-t)},\quad b>0.$$
Then $u$ solves \eqref{CP} if and only if $Q_b(y)$ solves the following ODE%
\footnote{See the definition of \lq\lq$\Lambda$" in Section 1.4.}%
:
\begin{equation}\label{PF}
b\Lambda Q_b+(Q_b''-Q_b+Q_b|Q_b|^{p-1})'=0.
\end{equation}

The exact solutions of \eqref{PF} have been exhabited by H. Koch \cite{K}, for the slightly supercritical nonlinearity $5<p<5+\varepsilon$, $0<\varepsilon\ll 1$. It is related to an eigenvalue problem, i.e. for all $5<p<5+\varepsilon$, there exists an unique $b=b(p)>0$, such that a unique smooth solution $Q_b$ to \eqref{PF} with zero energy is found. Moreover $Q_{b}$ belongs to
$\dot{H}^1\cap L^{p+1}$, but always misses the invariant Sobolev space $\dot{H}^{\sigma_c}$ (hence $Q_b\notin L^2$) due to a slowly decaying tail at the infinity:
$$Q_b(y)\sim\frac{1}{|y|^{\frac{1}{2}-\sigma_c}}.$$

Despite the slowly decaying tail, we can choose a suitable cut-off of $Q_b$ as an approximation, such that it is bounded in $L^2$ with exponential decay on the right. We claim that the approximate self-similar profile generates a stable self-similar blow-up dynamics for the time dependent problems.

\subsection{Statement of the result}
\begin{theorem}[Existence and stability of a self-similar blow-up dynamics]\label{MT}
There exists a $p^*>5$ such that for all $p\in(5,p^*)$, there exist constants $\delta(p)>0$ and $b^*(p)>0$ with
\begin{gather}
\lim_{p\rightarrow5}\delta(p)=0\\
0<c_0(p-5)\leq b^*(p)\leq C_0(p-5)
\end{gather}
and a nonempty open subset $\mathcal{O}_p$ in $H^1$ such that the following holds. If $u_0\in \mathcal{O}_p$, then the corresponding solution to \eqref{CP} blows up in finite time $0<T<+\infty$, with the following dynamics : there exist geometrical parameters $(\lambda(t),x(t))\in\mathbb{R}_{+}^*\times\mathbb{R} $ and an error term $\varepsilon(t)$ such that:
\begin{equation}\label{11}
u(t,x)=\frac{1}{\lambda(t)^{\frac{2}{p-1}}}\big{[}\mathcal{Q}_p+\varepsilon(t)\big{]}\bigg{(}\frac{x-x(t)}{\lambda(t)}\bigg{)}
\end{equation}
with
\begin{equation}\label{12}
\|\varepsilon_{y}(t)\|_{L^2}\leq \delta(p).
\end{equation}
Moreover, we have:
\begin{enumerate}
\item The blow-up point converges at the blow-up time:
\begin{equation}\label{13}
x(t)\rightarrow x(T) \text{ as } t\rightarrow T,
\end{equation}
\item The blow-up speed is self-similar:
\begin{equation}\label{14}
\forall t\in[0,T),\quad (1-\delta(p))\sqrt[3]{3b^*(p)}\leq\frac{\lambda(t)}{\sqrt[3]{T-t}}\leq(1+\delta(p))\sqrt[3]{3b^*(p)}.
\end{equation}
\item The following convergence holds:
\begin{equation}\label{15}
\forall q\in[2,\frac{2}{1-2\sigma_c}), \quad u(t)\rightarrow u^*\text{ in $L^q$ as $t\rightarrow T$}.
\end{equation}
\item The asymptotic profile $u^*$ displays the following singular behavior:
\begin{equation}\label{16}
\big{(}1-\delta(p)\big{)}\int \mathcal{Q}^2_p\leq \frac{1}{R^{2\sigma_c}}\int_{|x-x(T)|<R}|u^*|^2\leq \big{(}1+\delta(p)\big{)}\int \mathcal{Q}^2_p.
\end{equation}
for $R$ small enough.
In particular, we have for all $ q\geq \frac{2}{1-2\sigma_c}$:
$$u^*\notin L^q.$$
\end{enumerate}
\end{theorem}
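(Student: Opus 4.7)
The strategy is to adapt the Merle--Rapha\"el--Szeftel framework for slightly supercritical NLS \cite{MRS} to the gKdV setting, combined with the modulation and monotonicity technology developed by Martel--Merle--Rapha\"el in the $L^2$-critical case, and built on the Koch approximate self-similar profile $Q_b$ of \eqref{PF}. First I would construct a suitable localization $\tilde Q_b$ of the Koch solution by keeping $Q_b$ on the right, where it decays exponentially, and smoothly cutting off the slowly decaying left tail so that $\tilde Q_b\in H^1\cap L^2$. The associated residual
\[
\Psi_b:=b\Lambda\tilde Q_b+\bigl(\tilde Q_b''-\tilde Q_b+\tilde Q_b|\tilde Q_b|^{p-1}\bigr)'
\]
is then small and supported in a controlled region. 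In parallel I would establish the spectral structure of the linearized operator around $\tilde Q_b$: since $\tilde Q_b\to \mathcal Q_p$ as $b\to 0^+$, the known coercivity for the $L^2$-critical problem transfers perturbatively to a coercivity modulo three natural orthogonality directions.

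The second step is a geometric decomposition
\[
u(t,x)=\frac{1}{\lambda(t)^{2/(p-1)}}\bigl[\tilde Q_{b(t)}+\varepsilon(t)\bigr]\!\left(\frac{x-x(t)}{\lambda(t)}\right)
\]
with three orthogonality conditions on $\varepsilon$ uniquely fixing the triple $(\lambda,x,b)$ via the implicit function theorem. Differentiating these conditions in the rescaled time $s$ defined by $ds/dt=1/\lambda^3$ produces modulation equations whose three left-hand sides $\lambda_s/\lambda+b$, $x_s/\lambda-1$, $b_s$ are all of size $O\bigl(\|\varepsilon\|_{\mathrm{loc}}+\|\Psi_b\|_{\mathrm{loc}}\bigr)$. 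I would then run a bootstrap in a trapped regime $|b(s)-b^*(p)|\leq\delta(p)$, $\|\varepsilon_y(s)\|_{L^2}\leq\delta(p)$, with $b^*(p)\sim p-5$ selected dynamically by the conservation of $L^2$-mass through the leading $(p-5)$-asymptotics of $\|\tilde Q_b\|_{L^2}^2$.

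The heart of the proof is the construction of a Lyapunov functional, a $b$-dependent modification of the linearized energy, which is coercive in $\|\varepsilon_y\|_{L^2}^2$ modulo the orthogonality directions and almost non-increasing along the rescaled flow, up to errors of size $\|\Psi_b\|^2$. Combined with the modulation equations, this forces $\lambda(t)^3\sim 3b^*(p)(T-t)$ in finite time $T$, which is \eqref{14}. The convergence \eqref{13} then follows by integrating the equation for $x_s$ on $[0,T)$ against the finite measure $\lambda^{-2}\,dt$; the $L^q$ convergence \eqref{15} for $q<2/(1-2\sigma_c)$ comes from a dispersive/smoothing argument away from the blow-up point, combined with explicit $L^q$-boundedness of the rescaled profile in exactly this range; finally, the concentration bound \eqref{16} reduces to evaluating the $L^2$-mass of the rescaled $\tilde Q_{b(t)}$ on balls of radius $R$, which scales as $R^{2\sigma_c}\int\mathcal Q_p^2$ up to $o(1)$ as $p\to 5$.

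The main obstacle will be the construction and monotonicity of the Lyapunov functional. Unlike in the $L^2$-critical case, every integration by parts against $\tilde Q_b$ generates residual terms both from the truncation region and from the $b$-dependence of the profile, and all these error terms have to be kept of size $o(p-5)$ uniformly in the trapped regime. The interaction between the degenerating parameter $b^*(p)\to 0$ and the simultaneously shrinking spectral gap around $\tilde Q_b$ is precisely what selects the smallness threshold $p^*-5$, and controlling this quantitatively is the technical crux of the argument.
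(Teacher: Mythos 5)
Your overall architecture (Koch profile, localization at scale $b_c^{-1}$, geometric decomposition with three orthogonality conditions, modulation equations, a localized Lyapunov functional, bootstrap) is the same route the paper takes, but two load-bearing steps are wrong or missing. First, the selection and trapping of $b$. You propose that $b^*(p)\sim p-5$ is ``selected dynamically by the conservation of $L^2$-mass through the leading $(p-5)$-asymptotics of $\|\tilde Q_b\|_{L^2}^2$.'' In the supercritical regime the mass is a subcritical quantity: under the decomposition $\int u_0^2=\lambda^{-2\sigma_c}\int(Q_b+\varepsilon)^2$ with $\sigma_c>0$ and $\lambda\to0$, mass conservation imposes no constraint pinning $b$, and $\|Q_b\|_{L^2}^2$ depends only weakly on $b$ near $b_c$. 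What actually fixes $b^*(p)=b_c=b(p)$ is Koch's eigenvalue problem (the unique $b$ for which the exact profile has the correct decay/zero energy), and what traps $b(t)$ near $b_c$ is the damped structure of the modulation equation $b_s+c_p b_c\tilde b=O(b_c^3+b_c\mathcal N^{1/2})$ with $c_p>0$, $\tilde b=b-b_c$. This damping comes from the precise form of the profile error, $\Phi_b=C_p\tilde b\,b_c\,Q_b+\dots$ with $C_p=\partial_b\gamma|_{b=b_c}<0$, together with the non-degeneracy $(P_b,\mathcal Q_p)>0$ of the $b$-derivative of the profile. Without identifying this mechanism your bootstrap assumption $|b-b^*|\le\delta(p)$ cannot be improved, and the argument does not close.

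Second, the supercritical obstruction on the right. The real difficulty with the Lyapunov functional here is not only the truncation and $b$-dependence errors you flag, but that $\int_{y>0}\varepsilon^2$ is not controlled at all in the supercritical setting, so a weight that is merely bounded on the right produces error terms involving the full $L^2$ norm of $\varepsilon$ there. The paper handles this by taking the $\varepsilon^2$-weight compactly supported on the right (cut at $y\sim 2B^2$) and then controlling the resulting localized terms through two auxiliary inputs that are absent from your proposal: a monotonicity formula for a localized energy on the half-line $y>\kappa B$ (Kato-type identity in the original variables), giving $\int_{y>\kappa B}\varepsilon_y^2\lesssim b_c^{55/7}$ and hence smallness of $\|\varepsilon\|_{L^\infty(y>\kappa B)}$; and an $L^{p_0}$ bound on $\varepsilon$ ($p_0=5/2$) obtained from refined inhomogeneous Strichartz estimates for the Airy group applied to $\tilde u$ in the original variables. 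Both bounds enter the bootstrap on an equal footing with the Lyapunov control of $\|\varepsilon\|_{H^1_{\mathrm{loc}}}$; a proof that only runs a trapped regime in $\|\varepsilon_y\|_{L^2}$ and $b$, as you sketch, has no way to estimate the nonlinear and cutoff terms living in the region $\kappa B<y<2B^2$, and the monotonicity claim for your functional would fail there.
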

\begin{remark}
Here the meaning of $q_c=\frac{2}{1-2\sigma_c}$ is given by the following Sobolev embedding:
$$\dot{H}^{\sigma_c}\hookrightarrow L^{q_c}.$$
That is, the asymptotic profile $u^*$ is not in the critical space $\dot{H}^{\sigma_c}$, and the strong convergence \eqref{15} only exists in subcritical Lebesque spaces.
\end{remark}
\begin{remark}
It is easy to see from the $L^2$ conservation law that $\int|u^*|^2=\int|u_0|^2$.
\end{remark}
\begin{remark}
Theorem \ref{MT} is the first construction of blow-up solutions to the supercritical gKdV equations with initial data in $H^1$. This is a stable blow-up dynamics instead of a single blow-up solution. So it is not like the self-similar solution constructed by H. Koch in \cite{K}, though the construction in this paper relies deeply on H. Koch's work.
\end{remark}

\subsection{Notation}
We first introduce the associated scaling generators:
\begin{equation}
\Lambda f=\frac{2}{p-1}f+yf'.
\end{equation}
We denote the $L^2$ scalar product by:
\begin{equation}
(f,g)=\int_{\mathbb{R}}f(x)g(x)dx
\end{equation}
and observe the integration by parts:
\begin{equation}
(\Lambda f,g)=-(f,\Lambda g+2\sigma_c g).
\end{equation}
Then we let $\mathcal{Q}_p$ be the ground state. For $p=5$, we simply write $\mathcal{Q}_p$ as $\mathcal{Q}$. We introduce the linearized operators at $\mathcal{Q}_p$:
\begin{equation}
Lf=-f''+f-p\mathcal{Q}_p^{p-1}f.
\end{equation}
A standard computation leads to:
\begin{equation}
L(\mathcal{Q}_p')=0,\quad L(\Lambda \mathcal{Q}_p)=-2\mathcal{Q}_p.
\end{equation}
Finally, we denote by $\delta(p)$ a small positive constant such that:
\begin{equation}
\lim_{p\rightarrow5}\delta(p)=0.
\end{equation}

\subsection{Strategy of the proof}
We will give in this subsection a brief insight of the proof of Theorem \ref{MT}. We will first use the self-similar solution constructed by H. Koch in \cite{K}, to derive a finite dimensional dynamics, which fully describe the blow-up regime. Since we are considering the slightly supercritical case, it is helpful to view this equation as a perturbation of the critical equation in some sense. So we can use some critical techniques in our analysis, though they may have a totally different meaning in the supercritical case.
\subsubsection{Derivation of the law}
We look for a solution to \eqref{CP} of the form:
\begin{equation}
u(t,x)=\frac{1}{\lambda(t)^{\frac{2}{p-1}}}V_{b(t)}\bigg{(}\frac{x-x(t)}{\lambda(t)}\bigg{)},
\end{equation}
and introduce the rescaled time:
$$\frac{ds}{dt}=\frac{1}{\lambda(t)^3}.$$
Then $u$ is a solution to \eqref{CP} if and only if $V_b$ solves the following equation:
\begin{equation}
b_s\frac{\partial V_b}{\partial b}-\frac{\lambda_s}{\lambda}\Lambda V_b+(V_b''-V_b+V_b|V_b|^{p-1})'=\bigg{(}\frac{x_s}{\lambda}-1\bigg{)}V_b'.
\end{equation}
Similar to the Schr\"{o}dinger case, the self-similar blow-up regime of \eqref{CP} corresponds to the following finite dimensional dynamics:
\begin{equation}\label{17}
\frac{ds}{dt}=\frac{1}{\lambda^3},\quad\frac{x_s}{\lambda}=1,\quad \frac{\lambda_s}{\lambda}=-b,\quad b_s=0,
\end{equation}
which, after integrating, leads to finite time blow-up for $b(0)>0$ with:
$$\lambda(t)=c(u_0)\sqrt[3]{T-t}.$$
\subsubsection{Decomposition of the flow and modulation equations (section 2 and section 3)}From the previous discussing we can see it is significant to find a solution $Q_b$ to \eqref{PF}, which is done by H. Koch in \cite{K}. For our analysis, it is better to work with the localized approximate self-similar profile%
\footnote{See detailed discussing in Section 2.2.}%
:
$$Q_b(y)=v(b,p,y)\chi_0(b_cy).$$
Then we can introduce the geometrical decomposition of the flow:
$$u(t,x)=\frac{1}{\lambda(t)^{\frac{2}{p-1}}}\big{(}Q_{b(t)}+\varepsilon\big{)}\bigg{(}t,\frac{x-x(t)}{\lambda(t)}\bigg{)},$$
where the 3 time dependent parameters are adjusted to ensure suitable orthogonality conditions%
\footnote{See \eqref{OC}.}
 for $\varepsilon$. The modulation equations of the parameters are:
\begin{equation}\label{18}\begin{split}
&\frac{\lambda_s}{\lambda}+b=O(b_c^{\frac{5}{2}}+\|\varepsilon\|_{H^1_{\text{loc}}}),\\
&\frac{x_s}{\lambda}-1=O(b_c^{\frac{5}{2}}+\|\varepsilon\|_{H^1_{\text{loc}}}),\\
&b_s+c_p\tilde{b}b_c=O(b_c^3+b_c\|\varepsilon\|_{H^1_{\text{loc}}}).
\end{split}\end{equation}
Our main task here is to control $\|\varepsilon\|_{H^1_{\text{loc}}}$, which is done by a bootstrap argument%
\footnote{See Proposition \ref{BS}.}%
. If such a control exists, we will see that \eqref{18} is just a small perturbation of the system \eqref{17}, and has almost the same behavior%
\footnote{See detailed proof in Section 6.2.}%
.

\subsubsection{Monotonicity formula (section 4 and section 5)} The key techniques in this paper and the monotonicity of energy and a dispersive control of $\|\varepsilon\|_{H^1_{\text{loc}}}$.

The monotonicity of the energy gives a much better control of the $L^2$ norm of $\varepsilon_y$ on the half-line $[\kappa B,+\infty)$. Together with Gagliardo-Nirenberg inequality, we can control the localized $L^2$ norm of $\varepsilon$ on the right.

Next, we build a nonlinear functional:
$$\mathcal{F}\sim\int\bigg{[}\varepsilon_y^2\psi+\varepsilon^2\zeta-\frac{2}{p+1}\big{(}|\varepsilon+Q_b|^{p+1}-Q_b^{p+1}-(p+1)\varepsilon Q_b^p\big{)}\psi\bigg{]},$$
for well chosen functions $(\psi,\zeta)$, which are exponentially decaying to the left and bounded on the right. A similar functional was introduced in \cite{MMR1} for the critical equations, but they have a totally different meaning. Here the key point in supercritical case is that we cannot control $\int_{y>0}\varepsilon^2$. We must assume that $\zeta$ is compactly supported on the right, i.e. $\text{supp }\zeta\subset (-\infty,2B^2]$, for some large constant $B$. Then for $y>0$, only localized $L^2$ norm of $\varepsilon$ appears in $\mathcal{F}$, which can be controlled  by using the monotonicity of energy introduced before.

Moreover, from the choice of orthogonality conditions, the leading order term of $\mathcal{F}$ is coercive:
$$\mathcal{F}\sim\|\varepsilon\|_{H^1_{\text{loc}}}^2.$$

The most significant technique here is the $Lyapounov$ $monotonicity$:
\begin{equation}\label{19}
\frac{d\mathcal{F}}{ds}+\frac{1}{B}\|\varepsilon\|_{H^1_{\text{loc}}}^2\lesssim b_c^{\frac{7}{2}}.
\end{equation}
This formula shows that $\|\varepsilon\|_{H^1_{\text{loc}}}$ (or equivalently $\mathcal{F}$) is almost decreasing with respect to $s\in[0,+\infty)$. So it is controlled by a small constant (say, $b_c^{3+8\nu}$) if we choose a good initial data.

\subsubsection{End of the proof of Theorem \ref{MT}} We will see that the monotonicity formula \eqref{19} and modulation equations have already led to the bootstrap bound on $b$ and $\|\varepsilon\|_{H^1_{\text{loc}}}$. So we only need to prove the bound of $\|\varepsilon\|_{L^{p_0}}$. This is done by working on the original variable with the help of a refined Strichartz estimate%
\footnote{See Corollary \ref{RSEF}.}%
. Then we finish the bootstrap argument and the remaining part of Theorem \ref{MT} is followed by a standard procedure.

\subsection*{Acknowledgement} I would like to thank my supervisors F. Merle \& T. Duyckaerts for having suggested this problem to me and giving a lot of guidance.

\section{Description of the blow-up set of initial data}
This section is devoted to give a specific description of the open subset $\mathcal{O}_p$ of the initial data, which leads to the self-similar blow-up dynamics in Theorem 1.1. The most important part here is to construct a suitable approximate self-similar profile.
\subsection{Construction of the approximate self-similar profile}
This part follows H. Koch's work \cite{K}. To avoid misunderstanding, we use a different notation.

Let us consider a solution $u(t,x)$ of the form:
$$u(t,x)=\frac{1}{\big{(}3(T-t)\big{)}^{\frac{2}{3(p-1)}}}V\bigg{(}\frac{x}{\big{(}3(T-t)\big{)}^{\frac{1}{3}}}\bigg{)}.$$
Then by a standard computation, $u(t,x)$ is a solution if and only if $V(x)$ satisfies:
\begin{equation}\label{P}
\Lambda V+V'''+(V|V|^{p-1})'=0.
\end{equation}
For any constant $b>0$, we introduce a change of variable:
$$x=b^{\frac{1}{3}}(y+b^{-1}),\quad v(y)=b^{\frac{2}{3(p-1)}}V(b^{\frac{1}{3}}(y+b^{-1})).$$
Then \eqref{P} is equivalent to \eqref{PF}, i.e.
\begin{equation}\label{PF1}
b\Lambda v+(v''-v+v|v|^{p-1})'=0.
\end{equation}
The exact solution of \eqref{PF1} has been studied by H. Koch in \cite{K}. Actually H. Koch gives a even larger range of solutions.
\begin{proposition}[H. Koch \cite{K}]
There exist $p^*>5, b^*>0$, such that there exist 2 smooth maps: $\gamma(b,p): [0,b^*)\times[5,p^*)\rightarrow \mathbb{R}$, $v(b,p,y): [0,b^*)\times[5,p^*)\times\mathbb{R}\rightarrow \mathbb{R}$, such that the following holds:
\begin{enumerate}
\item The self-similar equation:
\begin{gather}
b\big{(}(1+\gamma(b,p))v+xv'\big{)}+(v''-v+v|v|^{p-1})'=0,\label{SFE}\\
 (v(b,p,\cdot),\mathcal{Q}_p'(\cdot))=0,\quad v(b,p,y)>0.
\end{gather}
\item For all $ p\in[5,p^*)$, there exists a unique $b=b(p)\in[0,b^*)$ such that:
\begin{equation}
\gamma(b(p),p)=-1+\frac{2}{p-1},\quad b(5)=0,
\end{equation}
Moreover,
\begin{gather}
\frac{d b(p)}{d p}\bigg{|}_{p=5}=\frac{\|\mathcal{Q}\|_{L^2}^2}{\|\mathcal{Q}\|_{L^1}^2}>0,\\
\frac{\partial\gamma}{\partial b}\bigg{|}_{b=b(p)}=-\frac{\|\mathcal{Q}_p\|_{L^1}^2}{8\|\mathcal{Q}_p\|_{L^2}^2}+O(|p-5|)<0,\\
\frac{1}{2}\int|v_y(b(p),p,y))|^2dy-\frac{1}{p+1}\int|v(b(p),p,y)|^{p+1}dy=0.
\end{gather}
\item $v(b,p,\cdot)\in \dot{H}^1\cap L^{p+1}$, $v(b,p,\cdot)\notin L^2$ if $b>0$ and $v(0,p,y)=\mathcal{Q}_p(y)$. Moreover, let
$$w_p(b,y)=v(b,p,y)-\mathcal{Q}_p(y),$$
then for all $k,n\in\mathbb{N}$ there holds:
\begin{gather}
|w_p(b,y)|\lesssim\begin{cases}
e^{-\frac{1}{3b}}(1+b^{-2/3}|1-by|)^{-1-\gamma} & \text{if $y > b^{-1}$},\\
b\exp(\frac{1}{3b}[(1-by)^{3/2}-1]) & \text{if $b^{-1}\geq y>0$},\\
b(1-by)^{-1-\gamma} & \text{if $y\leq0$},
\end{cases}\label{ASB}\\
|\partial_y^k\partial_b^nv|
\lesssim\begin{cases}
e^{-\frac{1}{3b}}(1+b^{-2/3}|1-by|)^{-1-\gamma-k} & \text{if $y > b^{-1}$},\\
\big{|}\partial_y^k\partial_b^n\big{(}\text{\normalfont Hi}_{\gamma}(b^{-2/3}(1-by))/\text{\normalfont Hi}_{\gamma}(b^{-2/3})\big{)}\big{|} & \text{if $b^{-1}\geq y>0$},\\
\big{|}\partial_y^k\partial_b^n\big{(}b(1-by)^{-1-\gamma}\big{)}\big{|}+e^y & \text{if $y\leq0$},
\end{cases}\label{ASB1}
\end{gather}
where
$$\text{\normalfont Hi}_{\gamma}(x)=\frac{1}{\pi}\int_0^{+\infty}\sigma^{\gamma}e^{-\frac{1}{3}\sigma^2+\sigma x}d\sigma.$$
\end{enumerate}
\end{proposition}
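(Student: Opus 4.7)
The natural approach is a singular perturbation analysis from $(b,p)=(0,5)$, at which \eqref{PF1} integrates once to the ground state equation for $\mathcal{Q}_p$ and gives $v(0,p,y)=\mathcal{Q}_p(y)$. Writing $v=\mathcal{Q}_p+w$, \eqref{SFE} takes the schematic form
\begin{equation*}
(Lw)' = -b\bigl((1+\gamma)v + yv'\bigr) - N(w),
\end{equation*}
where $L=-\partial_y^2+1-p\mathcal{Q}_p^{p-1}$ and $N$ is superlinear in $w$. The free parameter $\gamma$ is introduced because, for any $b>0$, the term $b\Lambda v$ is a singular perturbation: it destroys the exponential decay of $\mathcal{Q}_p$, replacing it by an algebraic tail on the left and an Airy/Scorer-type tail on the right, so no single weighted fixed-point argument covers both sides. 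The construction instead proceeds by matched asymptotics.

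\textbf{Construction of $v$ and $\gamma$.} I would partition $\mathbb{R}$ into three regions. In the inner zone $|y|\lesssim 1$, where $\mathcal{Q}_p^{p-1}$ is non-negligible, $w$ is an $O(b)$ regular perturbation; the kernel direction $\mathcal{Q}_p'$ of $L$ is killed by the orthogonality $(v,\mathcal{Q}_p')=0$, and one inverts $L$ on the orthogonal complement by Fredholm theory. In the left outer zone $y\leq 0$, $|by|\gtrsim 1$, the principal part of \eqref{SFE} reduces to a linear first-order equation whose decaying solution is proportional to $b(1-by)^{-1-\gamma}$; a contraction argument in a weighted space delivers the pointwise bounds \eqref{ASB}--\eqref{ASB1} there. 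In the right outer zone $y\geq b^{-1}$, the rescaling $y=b^{-1}+b^{-1/3}\xi$ turns the principal part into a third-order ODE whose unique decaying solution is expressed via the generalized Scorer function $\text{Hi}_\gamma$. Matching the three pieces across the transition zones $|1-by|\sim b^{2/3}$ pins down $\gamma$ as a smooth function of $(b,p)$, with $\gamma(0,5)=-\tfrac12$.

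\textbf{Selecting $b(p)$ and derivatives.} The identity $1+\gamma(b,p)=2/(p-1)$ is precisely the scaling compatibility that reduces \eqref{SFE} to \eqref{PF1}. Setting $F(b,p)=\gamma(b,p)+1-2/(p-1)$, we have $F(0,5)=0$. Differentiating \eqref{SFE} in $b$ at $b=0$, pairing the resulting identity against $\mathcal{Q}$, and using $L\Lambda\mathcal{Q}=-2\mathcal{Q}$ together with $(\Lambda\mathcal{Q},\mathcal{Q})=0$ at the critical exponent yields the explicit value $\partial_b\gamma(0,5)=-\|\mathcal{Q}\|_{L^1}^2/(8\|\mathcal{Q}\|_{L^2}^2)$, which is nonzero. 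The implicit function theorem then produces the smooth curve $b=b(p)$ near $p=5$, and the formula for $db/dp|_{p=5}$ follows by the chain rule, using $\partial_p\gamma(0,5)=0$ (itself a criticality identity). The zero-energy condition at $b=b(p)$ is a Pohozaev-type identity: pair \eqref{PF1} at $b=b(p)$ against $v$ and $yv'$ and integrate; the self-similar scaling of the associated blow-up $u(t,x)$ of \eqref{CP} together with energy conservation forces the two components of $E(v)$ to balance exactly, giving $E(v)=0$.

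\textbf{Main obstacle.} The technical heart is the right-tail construction: the generalized Scorer function $\text{Hi}_\gamma$ is the unique decaying solution of the rescaled third-order ODE, but matching its prefactor to the inner solution is impossible unless one simultaneously adjusts $\gamma$, which is exactly why $\gamma$ must be a free parameter. Proving the uniform pointwise bounds \eqref{ASB1} on all $b$- and $y$-derivatives as $b\to 0$ and $p\to 5$ requires sharp asymptotics on $\text{Hi}_\gamma$ and its ratios, and is the delicate content of Koch's original analysis, which the present paper uses as a black box.
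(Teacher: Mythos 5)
The paper does not prove this proposition: it is imported verbatim from H.~Koch's work \cite{K}, and Remark~2.2 merely records the correspondence (parts~(1),(2) to Theorem~3 of \cite{K}, estimate~\eqref{ASB} to Proposition~12, estimate~\eqref{ASB1} to Proposition~15), together with a footnoted correction of a minor misstatement in Koch's original Proposition~15. There is therefore no internal argument in this paper against which to check your sketch; what you have produced is a reconstruction of Koch's argument, which the present paper treats entirely as a black box feeding the truncated profile $Q_b$ in Lemma~\ref{SSP}.

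As a reconstruction, your outline captures the architecture correctly: the problem is a singular perturbation of the ground-state equation; $\gamma$ is the free parameter needed to match across the Airy-type transition layer at $y\sim b^{-1}$; the generalized Scorer function $\text{Hi}_\gamma$ governs the right tail; the algebraic tail $b(1-by)^{-1-\gamma}$ governs the left; and the scaling-compatibility condition $1+\gamma=2/(p-1)$ together with the implicit function theorem selects the curve $b(p)$. Two steps, however, are stated too quickly. First, the claim that one obtains $\partial_b\gamma(0,5)$ ``by differentiating \eqref{SFE} in $b$ at $b=0$ and pairing against $\mathcal{Q}$'' does not work at the order you invoke: writing $w=\partial_b v|_{b=0}$, the first-order-in-$b$ equation is $(Lw)'=\Lambda\mathcal{Q}$ (since $(1+\gamma(0,5))\mathcal{Q}+y\mathcal{Q}'=\Lambda\mathcal{Q}$ with $\gamma(0,5)=-\tfrac12$), in which $\partial_b\gamma$ simply does not appear; that derivative is determined by the solvability/matching condition for $w$, whose nondecaying algebraic tail produces the boundary contributions encoding $\partial_b\gamma$. (The computation you describe is essentially the one the paper itself uses in the proof of Lemma~\ref{SSP} to evaluate $(P,\mathcal{Q}_p)$, a different quantity.) Second, the zero-energy identity cannot be obtained from ``the self-similar scaling of the associated blow-up together with energy conservation,'' because $v\notin L^2$ and hence the self-similar solution is not in $H^1$, so there is no conserved energy to invoke; likewise a naive Pohozaev computation (multiply by $v$, integrate $yv v'$ by parts) runs into the divergent $\int v^2$. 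Koch extracts this identity from the structure of the matched expansion at $b=b(p)$ rather than from conservation. These caveats concern only the level of detail; the overall route you sketch is the same one the cited source takes.
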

\begin{remark}
(1) and (2) in Proposition 2.1 correspond to Theorem 3 in \cite{K}. \eqref{ASB} corresponds to Proposition 12 in \cite{K}. \eqref{ASB1} corresponds to Proposition 15 in \cite{K}%
\footnote{Let's mention that there is a slight problem in the original statement of this estimate in \cite{K} (i.e. Proposition 15 in \cite{K}). And \eqref{ASB1} is the correct version.}%
.
\end{remark}
\begin{remark}
In \cite{K}, H. Koch gives the following asymptotic behavior of $\text{Hi}_{\gamma}$:
$$\text{Hi}_{\gamma}(x)=\bigg{(}\frac{1}{\sqrt{\pi}}|x|^{-\frac{1}{4}+\frac{\gamma}{2}}+O(|x|^{-\frac{7}{4}+\frac{\gamma}{2}})\bigg{)}e^{\frac{2}{3}x^{3/2}},\text{ as }x\rightarrow +\infty.$$
together with the fact that $\partial_x\text{Hi}_{\gamma}=\text{Hi}_{\gamma+1}$,  we have for $b^{-1}\geq y> 0$:
\begin{equation*}\begin{split}
&\quad\;\big{|}\partial_y^k\partial_b^n\big{(}\text{\normalfont Hi}_{\gamma}(b^{-2/3}(1-by))/\text{\normalfont Hi}_{\gamma}(b^{-2/3})\big{)}\big{|}\\
&\lesssim_{k,n} \exp\Big{(}\frac{1}{3b}[(1-by)^{3/2}-1]\Big{)}\leq e^{-\frac{y}{10}}.
\end{split}\end{equation*}
Hence \eqref{ASB1} reads:
\begin{equation}\label{ASB2}
|\partial_y^k\partial_b^nv|
\lesssim_{k,n}\begin{cases}
e^{-\frac{1}{3b}}(1+b^{-2/3}|1-by|)^{-1-\gamma-k} & \text{if $y > b^{-1}$},\\
e^{-y/10} & \text{if $b^{-1}\geq y>0$},\\
\big{|}\partial_y^k\partial_b^n\big{(}b(1-by)^{-1-\gamma}\big{)}\big{|}+e^y & \text{if $y\leq0$},
\end{cases}
\end{equation}
\end{remark}

Now we fix some $p\in(5,p^*)$, and denote
\begin{equation}b_c=b(p)\sim p-5, \quad\tilde{b}=b-b_c.\end{equation}
 From now on, we will focus on the case $|\tilde{b}|\ll b_c$.

The exact self-similar solution $v$ is not in $L^2$, which is not good for our analysis. We need to construct a suitable approximation of $v$. Fortunately,  we observe that though $v$ has a slowly decaying tail at infinity, it is with a small coefficient:
\begin{equation*}
v(y)\sim\begin{cases}
\frac{e^{-1/3b_c}}{|y|^{1+\gamma}}\quad &\text{as } y\rightarrow +\infty,\\
\frac{b_c^{-\gamma}}{|y|^{1+\gamma}}\quad &\text{as } y\rightarrow -\infty.
\end{cases}
\end{equation*}
So it is reasonable to consider a suitable cut-off of $v$. Choose a smooth cut-off function $\chi_0(y)$, such that $\chi_0(y)=0$ if $|y|>2$, $\chi_0(y)=1$ if $|y|<1$. Then we define the approximate self-similar profile $Q_b(y)$ as:
\begin{equation}\label{21}
Q_b(y)=v(b,p,y)\chi(y),
\end{equation}
where $\chi(y)=\chi_0(b_cy)$. We have the following properties of the approximate self-similar profile:
\begin{lemma}[Properties of the localized profile]\label{SSP} Assume that $b_c$ is small and $|\tilde{b}|\ll b_c$, then there holds:
\begin{enumerate}
\item Estimates on $Q_b$, for all $ k\in\mathbb{N}$, $q\in [1,+\infty]$:
\begin{align}
&|\partial^k_yQ_b(y)|\lesssim_k e^{-\frac{y}{10}}, \quad\text{for } y\geq 0,\\
&|\partial_y^k Q_b(y)|\lesssim_k e^{y}+b_c^{1+k}\mathbf{1}_{[-2b_c^{-1},0]}(y),\quad\text{for } y\leq 0,\\
&\|Q_b-\mathcal{Q}_p\|_{L^q}\lesssim b_c^{1-\frac{1}{q}},\quad \|(Q_b-\mathcal{Q}_p)_y\|_{L^2}\lesssim b_c.
\end{align}
Here $\mathbf{1}_I$ is the characteristic function of any interval $I$.
\item $Q_b$ is an approximate solution to \eqref{PF}: Let
\begin{equation}
-\Phi_b=b\Lambda Q_b+(Q_b''-Q_b+Q_b|Q_b|^{p-1})',
\end{equation}
then for $k=0,1$:
\begin{equation}\label{APP}
\partial_y^k\Phi_b=C_p\tilde{b}b_c\partial_y^kQ_b+O\big{(}|\tilde{b}|^2\partial_y^kQ_b+b_c^{2}\mathbf{1}_{[-2,-1]}(b_cy)+e^{-\frac{1}{10b_c}}\mathbf{1}_{[1,2]}(b_cy)\big{)},
\end{equation}
where $C_p=\frac{d\gamma}{db}\big{|}_{b=b_c}<0$.
\item Energy property of $Q_b$:
\begin{equation}
|E(Q_b)|\lesssim b_c^3+|\tilde{b}|.
\end{equation}
\item Properties of the first order term with respect to $b$:
let $P_b(y)=\frac{\partial Q_b}{\partial b}(y)$, then
\begin{equation}\label{25}
|P_b(y)|\lesssim e^{-\frac{y}{10}}\mathbf{1}_{\{y>0\}}(y)+\mathbf{1}_{[-2b_c^{-1},0]}(y).
\end{equation}
Furthermore, we have:
\begin{equation}\label{NV}
(P_b,\mathcal{Q}_p)=\frac{1}{16}\bigg{(}\int \mathcal{Q}_p\bigg{)}^2+O(|p-5|)>0.
\end{equation}
\end{enumerate}
\end{lemma}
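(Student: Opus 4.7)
All four claims rest on the detailed pointwise bounds for $v$ and $\partial_b^n v$ in Proposition 2.1 and Remark 2.3, combined with the cutoff $\chi(y)=\chi_0(b_c y)$. The cutoff equals $1$ on $|b_c y|\le 1$, vanishes for $|b_c y|\ge 2$, and its derivatives $\chi^{(k)}$ are $O(b_c^k)$ supported in the annulus $1\le|b_c y|\le 2$. The essential point is that on the support of $\chi'$ the profile $v$ itself is already small (of size $e^{-1/(3b_c)}$ on the right, $O(b_c)$ on the left), so every commutator with $\chi$ ends up harmless. I would treat the four parts in order, with part (2) as the main technical hurdle.

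\textbf{Pointwise and $L^q$ bounds (parts (1) and the first half of (4)).} These follow from \eqref{ASB2} applied to $Q_b=v\chi$ via the Leibniz rule together with the support of $\chi$. For $y\ge 0$, \eqref{ASB2} gives $|\partial_y^k v|\lesssim e^{-y/10}$ on $[0,b^{-1}]$, while $\chi$ kills everything beyond $y=2b_c^{-1}$. For $y\le 0$, the splitting $Q_b=\mathcal{Q}_p\chi+w_p\chi$ produces the $e^y$ contribution from $\mathcal{Q}_p$ and, via \eqref{ASB}, the $b_c^{1+k}$ contribution from $w_p$ on $[-2b_c^{-1},0]$. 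The $L^q$ estimate on $Q_b-\mathcal{Q}_p=w_p\chi+\mathcal{Q}_p(\chi-1)$ follows similarly: on the left tail of length $\lesssim b_c^{-1}$ the $w_p$ integrand has height $\lesssim b_c$, giving $L^q$-size $b_c^{1-1/q}$, and $\mathcal{Q}_p(\chi-1)$ is exponentially small. For $\|(Q_b-\mathcal{Q}_p)_y\|_{L^2}$, the same split combined with $|\partial_y w_p|\lesssim b_c|\partial_y\partial_b v|$ (which uses $w_p(0,y)=0$ and \eqref{ASB1}) gives the $b_c$-bound. The pointwise estimate \eqref{25} on $P_b=(\partial_b v)\chi$ is obtained identically from \eqref{ASB2} with $n=1$.

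\textbf{Approximation error (part (2)).} This is the most delicate step and the main obstacle. The key input is that $v$ satisfies \eqref{SFE} exactly; writing $\Lambda v=\frac{2}{p-1}v+yv'$, this rearranges as
\[
b\Lambda v+b\Bigl(\tfrac{2}{p-1}-1-\gamma(b,p)\Bigr)v+(v''-v+v|v|^{p-1})'=0.
\]
Since $\gamma(b_c,p)=-1+\frac{2}{p-1}$ and $\gamma$ is smooth in $b$, Taylor expansion at $b=b_c$ gives $\frac{2}{p-1}-1-\gamma(b,p)=-C_p\tilde b+O(\tilde b^2)$. Substituting $Q_b=v\chi$ in the definition of $\Phi_b$ and expanding $b\Lambda(v\chi)$ and $((v\chi)''-v\chi+v|v|^{p-1}\chi^p)'$, the identity for $v$ produces the main term $-C_p\tilde b\,b\,Q_b+O(\tilde b^2)Q_b$; every remaining contribution is a commutator involving $\chi',\chi'',\chi'''$ or a nonlinear remainder $(\chi^p-\chi)(v|v|^{p-1})'$. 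These commutators are supported in $b_cy\in[-2,-1]\cup[1,2]$; combining $|\chi^{(j)}|\lesssim b_c^j$ with $|v|\lesssim b_c$ on the left annulus and $|v|\lesssim e^{-1/(3b_c)}$ on the right reproduces the claimed control $b_c^2\mathbf{1}_{[-2,-1]}(b_cy)+e^{-1/(10b_c)}\mathbf{1}_{[1,2]}(b_cy)$. The $k=1$ version is obtained by differentiating in $y$. The bookkeeping of all the commutator terms is where care is required.

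\textbf{Energy and scalar product (parts (3) and (4)).} From Proposition 2.1(2), $E(v(b_c,p,\cdot))=0$, which is well-defined since $v_y\in L^2$ and $v\in L^{p+1}$. Smoothness of $v$ in $b$ on $[0,b^*)$ gives $|E(v(b,p,\cdot))|\lesssim|\tilde b|$; the cutoff error $|E(Q_b)-E(v(b,p,\cdot))|$ is concentrated on $|b_cy|\in[1,2]$, and using $|v|,|v'|\lesssim b_c$ on the left annulus of measure $\lesssim b_c^{-1}$ one finds it is $\lesssim b_c^3$, with the right side exponentially small. For \eqref{NV}, I would compute the leading order at $(b,p)=(0,5)$, where $v|_{b=0}=\mathcal{Q}$. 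Differentiating \eqref{SFE} in $b$ at $(0,5)$ yields $[L(\partial_b v|_{b=0})]'=\Lambda\mathcal{Q}$, so $L(\partial_b v|_{b=0})=F$ with $F'=\Lambda\mathcal{Q}$, the integration constant being fixed by the behavior of $\partial_b v$ at infinity. Self-adjointness of $L$ together with $L(\Lambda\mathcal{Q})=-2\mathcal{Q}$ then yields $-2(\partial_b v|_{b=0},\mathcal{Q})=(F,\Lambda\mathcal{Q})=\tfrac12[F^2]_{-\infty}^{+\infty}$, and evaluating $F$ at infinity via $\int\Lambda\mathcal{Q}=-\tfrac12\int\mathcal{Q}$ produces the claimed $\tfrac{1}{16}(\int\mathcal{Q})^2$. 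Continuity of $v$ and $\chi$ in $p$ supplies the $O(|p-5|)$ correction.
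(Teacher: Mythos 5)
Your proposal is correct and follows essentially the same route as the paper: parts (1), (3) and the first half of (4) from Koch's bounds \eqref{ASB}--\eqref{ASB2} combined with the cutoff, part (2) from the exact equation \eqref{SFE} plus the Taylor expansion of $\gamma$ at $b=b_c$ with all commutator terms confined to the annulus $1\le |b_c y|\le 2$, and \eqref{NV} from $(LP)'=\Lambda\mathcal{Q}_p$ together with $L(\Lambda\mathcal{Q}_p)=-2\mathcal{Q}_p$, where your normalization of the antiderivative $F$ by decay at $+\infty$ is just the paper's pairing with $\int_{-\infty}^{y}\Lambda\mathcal{Q}_p$ in disguise and yields the same constant $\frac{1}{16}\big(\int\mathcal{Q}_p\big)^2$. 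Two harmless slips: in your rearrangement of \eqref{SFE} the extra term should be $b\big(1+\gamma-\frac{2}{p-1}\big)v$ on the left (your displayed sign is off, though your Taylor statement and the resulting main term are correct), and in part (3) the cutoff error also includes the tail $|b_c y|\ge 2$ where $\chi$ vanishes but $v$ does not, which is still $O(b_c^3)$ by \eqref{ASB}.
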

\begin{proof}
(1) is a direct consequence of the asymptotic behavior of $v$, i.e. \eqref{ASB} and \eqref{ASB2}. For (2), a standard computation shows that:
\begin{multline*}\label{24}
-\Phi_b=-C_p\tilde{b}(b_c+\tilde{b})Q_b-b(\gamma+1-\frac{2}{p-1}-C_p\tilde{b})Q_b+\big{(}byv\chi'+v\chi'''\\+3v'\chi''
+3v''\chi'-v\chi'+pv'v^{p-1}(\chi^p-\chi)+p\chi'\chi^{p-1}v^p\big{)}.
\end{multline*}
Then (2) follows immediately from \eqref{ASB}, \eqref{ASB2} and the choice of $\chi$.

For (3), we note that $E(v(b_c,p,\cdot))=0$, and again from \eqref{ASB} we obtain:
$$|E(Q_b)-E(v(b_c,p,\cdot))|\lesssim |\tilde{b}|+b_c^3.$$

Finally we prove (4). First, \eqref{25} follows immediately from \eqref{ASB2}. For \eqref{NV}, we let $P(y)=\frac{\partial v}{\partial b}\big{|}_{b=0}(y)$.
From \eqref{ASB2} and continuity,
\begin{align*}
|P_b(y)-P(y)|&=\bigg{|}b\int_0^1\frac{\partial^2v}{\partial b^2}(tb,p,y)\chi(y)dt-P(y)\big{(}1-\chi(y)\big{)}\bigg{|}\\
&\lesssim b_c|y|\mathbf{1}_{[-2b_c^{-1},0]}(y)+b_c\mathbf{1}_{[-2b_c^{-1},2b_c^{-1}]}(y)+\mathbf{1}_{\{|y|>1/b_c\}}(y),
\end{align*}
which yields:
$$|(P_b,\mathcal{Q}_p)-(P,\mathcal{Q}_p)|\lesssim b_c=O(|p-5|).$$
So we only need to show that:
\begin{equation}\label{NV1}(P,\mathcal{Q}_p)=\frac{1}{16}\bigg{(}\int \mathcal{Q}_p\bigg{)}^2+O(|p-5|)>0.\end{equation}
We consider the Taylor's expansion of $v$ with respect to $b$ for $b\rightarrow0^+$ (here we ignore the assumption $|\tilde{b}|\ll b_c$). And then keep track of the first order term of $b$ in \eqref{SFE}. Observe that $\gamma(0,p)=\frac{2}{p-1}-1+O(|p-5|)$,  so we obtain:
\begin{align*}(LP)'=\Lambda \mathcal{Q}_p+O(|p-5|)\mathcal{Q}_p.
\end{align*}
Taking scalar product with $\int_{-\infty}^{y}\Lambda \mathcal{Q}_p$ yields
$$\frac{1}{2}\bigg{(}\int \Lambda\mathcal{Q}_p\bigg{)}^2+O(|p-5|)=-(LP,\Lambda \mathcal{Q}_p)=-(P,L(\Lambda \mathcal{Q}_p))=2(P,\mathcal{Q}_p).$$
Since $$\int \Lambda\mathcal{Q}_p=\bigg{(}\frac{2}{p-1}-1\bigg{)}\int \mathcal{Q}_p=\bigg{(}-\frac{1}{2}+O(|p-5|)\bigg{)}\int \mathcal{Q}_p,$$
then \eqref{NV1} follows, which concludes the proof of the Lemma.
\end{proof}

\subsection{Description of the blow-up set of initial data}
\begin{definition}\label{IDS}
Fix a small universal constant $\nu>0$ (which will be chosen later). For $p\in(5,p^*(\nu))$ with $p^*(\nu)$ close enough to $5$, we let $\mathcal{O}_p$ be the set of initial data $u_0\in H^1$ of the form:
$$u_0(x)=\frac{1}{\lambda_0^{\frac{2}{p-1}}}(Q_{b_0}+\varepsilon_0)\bigg{(}\frac{x-x_0}{\lambda_0}\bigg{)}$$
with parameter $(\lambda_0,x_0,b_0)\in\mathbb{R}_+^*\times\mathbb{R}\times\mathbb{R}_+^*$, such that:
\begin{enumerate}
\item $b_0$ is near $b_c(=b(p)\sim \sigma_c\sim p-5>0)$:
\begin{equation}
|b_0-b_c|< b_c^{\frac{7}{2}};
\end{equation}
\item Smallness of $\varepsilon_0$ in $H^1$:
\begin{equation}
\int \varepsilon_0^2+(\varepsilon_0)_y^2< b_c^{30};
\end{equation}
\item Condition on the scaling  parameter:
\begin{equation}
0<\lambda_0\leq1.
\end{equation}
\end{enumerate}
\end{definition}
\begin{remark}
It is easy to verify that $\mathcal{O}_p$ is nonempty. We may choose suitable $b_0,x_0,\lambda_0$, and set $\varepsilon_0=0$.
\end{remark}
\subsection{Setting the bootstrap}
Let $u_0\in \mathcal{O}_p$, and $u(t)$ be the corresponding solution to \eqref{CP} with maximal time interval $[0,T)$, $0<T\leq+\infty$. By using the regularity $u\in C([0,T),H^1)$ and a standard modulation theory%
\footnote{See Lemma 1 in \cite{MM4} and Lemma 2.5 in \cite{MMR1}.}%
(up to some small perturbations), we can find a  $0<T^*\leq T$, such that for all $ t\in[0,T^*)$, $u(t,x)$ admits a unique decomposition:
\begin{equation}\label{GD}
u(t,x)=\frac{1}{\lambda(t)^{\frac{2}{p-1}}}(Q_{b(t)}+\varepsilon(t))\bigg{(}\frac{x-x(t)}{\lambda(t)}\bigg{)}
\end{equation}
with geometrical parameters $(\lambda(t),x(t),b(t))\in\mathbb{R}_+^*\times\mathbb{R}\times\mathbb{R}_+^*$, which are all $C^1$ functions and the following orthogonality condition holds:
\begin{equation}\label{OC}
(\varepsilon(t),\mathcal{Q}_p)=(\varepsilon(t),\Lambda \mathcal{Q}_p)=(\varepsilon(t),y\Lambda \mathcal{Q}_p)=0.
\end{equation}
Moreover, we may assume that:
\begin{gather}
|\tilde{b}(0)|=|b(0)-b_c|\leq b_c^2,\\
\int \varepsilon^2(0)+\varepsilon_y^2(0)< b_c^{20}\label{HS},\\
0<\lambda(0)\leq 2.
\end{gather}

Now we state the bootstrap argument. Denote
\begin{equation}\label{B}
B=b_c^{-\frac{1}{20}}
\end{equation}
and then choose a smooth function  $\varphi$ such that:
\begin{equation}\begin{split}
&\varphi(y)=\begin{cases}
e^y & \text{for } y<-1,\\
1+y & \text{for } -\kappa<y<\kappa,\\
3 &\text{for } y>1,
\end{cases}\\
&\varphi'(y)\geq 0 \text{ for all } y\in\mathbb{R},
\end{split}\end{equation}
where $0<\kappa<1$ is a small universal constant to be chosen later%
\footnote{See in Appendix A.}%
.
We let
$\varphi_B(y)=\varphi(\frac{y}{B})$,
and define the localized Sobolev norm of $\varepsilon$:
\begin{equation}
\mathcal{N}(t)=B\bigg{(}\int \varepsilon^2(t,y)\varphi_B'(y)dy+\int \varepsilon_y^2(t,y)\varphi_B'(y)dy\bigg{)}.
\end{equation}
By continuity, we may assume that on $[0,T^*)$,  the following a {\it priori} bound holds:
\begin{gather}
|\tilde{b}(t)|\leq b_c^{\frac{3}{2}+\nu}\label{BS1},\\
\mathcal{N}(t)\leq b_c^{3+6\nu}\label{BS3},\\
\|\varepsilon(t)\|_{L^{p_0}}\leq b_c^{\frac{23}{50}}\label{BS4},\\
\|\varepsilon_y\|_{L^2}\leq b_c^{\frac{2}{3}}\label{BS5}.
\end{gather}
Here we choose
$$p_0=\frac{5}{2}.$$
\begin{remark}
From bootstrap assumption \eqref{BS4}, \eqref{BS5} and Gagliardo-Nirenberg inequality, we have for all $ q_0\geq p_0$,
\begin{equation*}
\|\varepsilon\|_{L^{q_0}}\lesssim \|\varepsilon\|_{L^{p_0}}^{\frac{p_0(q_0+2)}{q_0(p_0+2)}}\|\varepsilon_y\|_{L^2}^{\frac{2(q_0-p_0)}{q_0(p_0+2)}}\leq b_c^{\frac{149q_0-62}{270q_0}}.
\end{equation*}
In particular, for $q_0=p$ (note that $p$ is slightly larger than 5) and $q_0=+\infty$, we have:
\begin{equation}\label{CD2}
\int|\varepsilon|^p\lesssim b_c^{\frac{5}{2}},\quad \|\varepsilon\|_{L^{\infty}}\leq b_c^{\frac{149}{270}},
\end{equation}
Moreover, for all $ t\in[0,T^*)$:
\begin{equation}\label{CD1}
\int \varepsilon^2(t)e^{-\frac{|y|}{2}}\lesssim \mathcal{N}(t)+e^{-\kappa B/2}\|\varepsilon\|_{L^{\infty}}^2\leq b_c^{20}+\mathcal{N}(t).
\end{equation}
\end{remark}
Our main claim is that the above regime is trapped:
\begin{proposition}\label{BS}
There holds for all $ t\in[0,T^*)$,
\begin{gather}
|\tilde{b}(t)|\leq b_c^{\frac{3}{2}+2\nu}\label{BB1},\\
\mathcal{N}(t)\leq b_c^{3+8\nu}\label{BB3},\\
\|\varepsilon(t)\|_{L^{p_0}}\leq b_c^{\frac{13}{28}}\label{BB4},\\
\|\varepsilon_y\|_{L^2}\leq b_c^{\frac{3}{4}}\label{BB5}.
\end{gather}
and hence we may take $T^*=T$.
\end{proposition}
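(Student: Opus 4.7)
The plan is to close a bootstrap argument: I want to show that on $[0,T^*)$ the a priori bounds \eqref{BS1}--\eqref{BS5} can be strictly improved to \eqref{BB1}--\eqref{BB5}, so that by continuity $T^*=T$. The three tools prepared in the rest of the paper are the modulation equations \eqref{18}, the Lyapunov monotonicity \eqref{19}, and the refined Strichartz estimate of Corollary \ref{RSEF} applied in the original (unrescaled) variable. I would improve the four bounds in the following order: first the weighted norm $\mathcal{N}(t)$, then $|\tilde b(t)|$, and finally the non-localized norms $\|\varepsilon\|_{L^{p_0}}$ and $\|\varepsilon_y\|_{L^2}$.

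For $\mathcal{N}$, I would exploit the fact that the Lyapunov functional $\mathcal{F}$ of Section 4 is coercive in $H^1_{\text{loc}}$ thanks to the orthogonality conditions \eqref{OC}, i.e. $\mathcal{F}(t)\sim\|\varepsilon(t)\|_{H^1_{\text{loc}}}^2\sim\mathcal{N}(t)/B$, so that \eqref{19} reduces to the damped linear differential inequality
$$\frac{d\mathcal{F}}{ds}+\frac{1}{B}\mathcal{F}\lesssim b_c^{7/2}.$$
A Gronwall-type integration, together with the initial smallness $\mathcal{F}(0)\lesssim b_c^{20}$ inherited from \eqref{HS}, yields $\mathcal{F}(s)\lesssim e^{-s/B}\mathcal{F}(0)+B\,b_c^{7/2}$, hence
$$\mathcal{N}(t)\lesssim B^2 b_c^{7/2}=b_c^{17/5},$$
which strictly beats $b_c^{3+8\nu}$ provided $\nu<1/20$. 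Plugging this into the modulation law for $b$ from \eqref{18}, namely $b_s+c_p\tilde b\,b_c=O(b_c^3+b_c\|\varepsilon\|_{H^1_{\text{loc}}})$ with damping coefficient $c_p>0$ (whose sign traces back to $C_p<0$ in Lemma \ref{SSP}(2)), the resulting linear ODE for $\tilde b$ gives $|\tilde b(t)|\lesssim|\tilde b(0)|+b_c^2+\|\varepsilon\|_{H^1_{\text{loc}}}\lesssim b_c^{69/40}$, which is comfortably smaller than $b_c^{3/2+2\nu}$ for $\nu$ small enough.

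The hardest step is the improvement of the non-localized bounds \eqref{BB4}--\eqref{BB5}. The Lyapunov functional controls $\varepsilon$ only in the weighted space $H^1_{\text{loc}}$, and in the supercritical regime no conservation law sees $\|\varepsilon_y\|_{L^2}$ directly, so the energy argument of the critical theory cannot be repeated. The approach, as announced in Section 1.5, is to return to the original $(t,x)$ variables, write $u=u_{\text{prof}}+w$ where $u_{\text{prof}}$ is the rescaled and translated profile, and treat $w$ as a solution of the linearized KdV equation with a small source built from $\Phi_b$ and the modulation terms. Applying the Kenig--Ponce--Vega local smoothing and maximal function Strichartz estimates in the refined form of Corollary \ref{RSEF}, and using the already improved bound on $\mathcal{N}$ together with the estimate \eqref{APP} on $\Phi_b$ to bound the right-hand side, one should obtain mixed space-time estimates that, after undoing the change of variables, translate into the sought pointwise-in-$t$ control of $\|\varepsilon(t)\|_{L^{p_0}}$ and $\|\varepsilon_y(t)\|_{L^2}$. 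The delicate issue is a careful tracking of the scaling, so that the smallness extracted from the refined Strichartz inequality (which lives naturally in the original variable) survives the translation back to the rescaled variable $y=(x-x(t))/\lambda(t)$.
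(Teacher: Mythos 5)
Your handling of \eqref{BB3} and \eqref{BB1} is essentially the paper's argument in a slightly different dress: the paper closes these two bounds by trapping/contradiction arguments rather than by Gronwall, but both routes rest on exactly the same inputs, namely the Lyapunov inequality \eqref{MF1} together with the coercivity \eqref{MF2}, and the sign of $c_p$ in \eqref{MES3}. (One harmless slip: \eqref{MF2} gives $\mathcal{F}\sim\mathcal{N}$ up to $O(b_c^{7/2})$, not $\mathcal{F}\sim\mathcal{N}/B$ — it is $\int(\varepsilon_y^2+\varepsilon^2)\varphi_B'$ that equals $\mathcal{N}/B$ — but your differential inequality and the resulting bound $\mathcal{N}\lesssim B^2b_c^{7/2}\ll b_c^{3+8\nu}$ survive this.) Your plan for \eqref{BB4} is also the paper's: pass to the original variables, write the Duhamel formula for $\tilde{u}$ with source $\mathcal{E}$ (built from $\Phi_b$, $b_sP_b$ and the modulation terms) plus $(f(\tilde{u}))_x$, and apply Corollary \ref{RSEF} with $(q_1,r_1)=(+\infty,p_0)$, the free part being handled by $\dot{H}^{\sigma_0}\hookrightarrow L^{p_0}$ and the time integrals by \eqref{48}.

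The genuine gap is \eqref{BB5}. You propose to extract $\|\varepsilon_y\|_{L^2}$ from the same refined Strichartz machinery, after asserting that "no conservation law sees $\|\varepsilon_y\|_{L^2}$"; both points are wrong for this problem. Corollary \ref{RSEF} is a purely dispersive inhomogeneous estimate with no derivative gain, so closing an $L^\infty_t\dot{H}^1_x$ bound for $\tilde{u}$ would require placing $\partial_x\big((f(\tilde{u}))_x\big)$ in a dual Strichartz norm, which cannot be controlled by the available quantities $\mathcal{N}$, $\|\varepsilon\|_{L^{p_0}}$, $\|\varepsilon_y\|_{L^2}$ (you would need $\varepsilon_{yy}$, or the genuine Kenig--Ponce--Vega local smoothing/maximal function framework in $L^\infty_xL^2_t$ and $L^4_xL^\infty_t$, which is not what Corollary \ref{RSEF} provides and is never set up in the paper); moreover the profile part $Q_S$ is large in $\dot{H}^1$, of size $\lambda^{-(1-\sigma_c)}$, so the corresponding source terms are only borderline integrable in time. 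The paper instead obtains \eqref{BB5} by an elementary energy argument: this is precisely \eqref{ECL1} of Lemma 4.1, proved by expanding the conserved energy in the rescaled variables as in \eqref{ECL}, using $|E(Q_b)|\lesssim b_c^3+|\tilde{b}|$, the monotone decay of $\lambda(s)$, the localized bootstrap bounds for the cross terms, and the initial-data assumptions to get $\lambda(0)^{2(1-\sigma_c)}|E(u_0)|\lesssim b_c^{\frac32+\nu}$, whence $\int\varepsilon_y^2\lesssim b_c^{\frac32+\nu}\le b_c^{\frac32}$, i.e. \eqref{BB5}; the companion monotonicity of the localized energy \eqref{42} then gives the much stronger bound \eqref{ECL2} on $y>\kappa B$, which is itself an essential input to Proposition \ref{MF} and to the $L^{p_0}$ step. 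Without this energy step your bootstrap does not close, so you should replace the Strichartz treatment of \eqref{BB5} by the conservation-law argument of Lemma 4.1.
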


The next 3 sections are devoted to derive the dynamical controls of the geometrical parameters and monotonicity tools, which are the heart of the proof of the bootstrap bound in Proposition \ref{BS}. Then Theorem \ref{MT} is just a simple consequence of Proposition \ref{BS}, which will be shown in Section 6.

\section{Modulation equations}
In the framework of the geometrical decomposition \eqref{GD}, we introduce a new variable:
\begin{equation}
s=\int_0^t\frac{1}{\lambda^3(t')}dt',\quad y=\frac{x-x(t)}{\lambda(t)}.
\end{equation}
Now we use $(s,y)$ instead of the original variables $(t,x)$, and denote $s^*=s(T^*)$. Then we can claim the following properties:
\begin{proposition}The map $s\in[0,s^*)\rightarrow(\lambda(s),x(s),b(s))$ is $C^1$ and the following holds:
\begin{enumerate}
\item Equation of $\varepsilon$: for all $ s\in[0,s^*)$,
\begin{equation}\label{ME}\begin{split}
\varepsilon_s-(L\varepsilon)_y+b\Lambda\varepsilon=&\bigg{(}\frac{\lambda_s}{\lambda}+b\bigg{)}(\Lambda Q_b+\Lambda\varepsilon)+\bigg{(}\frac{x_s}{\lambda}-1\bigg{)}(Q_b+\varepsilon)_y\\
&+\Phi_b-b_sP_b-(R_b(\varepsilon))_y-(R_{NL}(\varepsilon))_y,
\end{split}\end{equation}
where
\begin{gather}
\Phi_b=-b\Lambda Q_b-(Q_b''-Q_b+Q_b^p)',\\
R_b(\varepsilon)=p(Q_b^{p-1}-\mathcal{Q}_p^{p-1})\varepsilon,\\
 R_{\text{NL}}(\varepsilon)=(\varepsilon+Q_b)|\varepsilon+Q_b|^{p-1}-p\varepsilon Q_b^{p-1}-Q_b^p.
\end{gather}
\item Modulation equation:
\begin{gather}
\bigg{|}\frac{\lambda_s}{\lambda}+b_c\bigg{|}\lesssim b_c^{\frac{5}{2}}+\mathcal{N}^{\frac{1}{2}}\label{MES1},\\
\bigg{|}\frac{x_s}{\lambda}-1\bigg{|}\lesssim b_c^{\frac{5}{2}}+\mathcal{N}^{\frac{1}{2}}\label{MES2},\\
|b_s+c_p\tilde{b}b_c|\lesssim b_c^{3}+b_c\mathcal{N}^{\frac{1}{2}}\label{MES3},
\end{gather}
where $c_p$ is a positive constant with $c_p=2+O(|p-5|)$.
\end{enumerate}
\end{proposition}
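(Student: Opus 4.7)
The plan is to derive (3.2) by direct chain-rule substitution and then obtain (3.7)--(3.9) via the standard modulation argument based on the three orthogonality conditions (2.35).

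For part (1), inserting the ansatz $u(t,x) = \lambda^{-2/(p-1)}(Q_b + \varepsilon)(s,(x-x(t))/\lambda(t))$ into (1.1), using $\partial_s = \lambda^3 \partial_t$, and collecting, the time derivatives of $\lambda, x, b$ produce the terms $(\lambda_s/\lambda)\Lambda(Q_b + \varepsilon)$, $-(x_s/\lambda)(Q_b + \varepsilon)_y$, and $b_s P_b$, while the spatial part reconstructs $(L\varepsilon)_y - b\Lambda\varepsilon$, the approximate profile relation $b\Lambda Q_b + (Q_b'' - Q_b + Q_b|Q_b|^{p-1})' = -\Phi_b$ from Lemma 2.2(2), and the decomposition of the nonlinear leftover into $(R_b(\varepsilon))_y + (R_{NL}(\varepsilon))_y$. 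This yields (3.2) directly.

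For part (2), differentiating each relation in (2.35) in $s$ and substituting (3.2) gives a $3 \times 3$ linear system for the unknowns $(\lambda_s/\lambda + b,\ x_s/\lambda - 1,\ b_s)$. Two pairings vanish exactly against $\mathcal{Q}_p$: $((L\varepsilon)_y, \mathcal{Q}_p) = -(\varepsilon, L\mathcal{Q}_p') = 0$ via $L\mathcal{Q}_p' = 0$, and $(\Lambda\varepsilon, \mathcal{Q}_p) = -(\varepsilon, \Lambda\mathcal{Q}_p + 2\sigma_c \mathcal{Q}_p) = 0$ by (2.35); the analogous pairings against $\Lambda\mathcal{Q}_p$ and $y\Lambda\mathcal{Q}_p$ are only $O(\mathcal{N}^{1/2})$ and feed the error. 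After a row permutation the coefficient matrix has diagonal entries
\begin{equation*}
(P_b, \mathcal{Q}_p) = \tfrac{1}{16}\Bigl(\int \mathcal{Q}_p\Bigr)^2 + O(|p-5|), \quad (\Lambda Q_b, \Lambda\mathcal{Q}_p) = \|\Lambda\mathcal{Q}_p\|_{L^2}^2 + O(b_c), \quad (Q_{b,y}, y\Lambda\mathcal{Q}_p) = c_0 + O(b_c),
\end{equation*}
where $c_0 = \int y^2(\mathcal{Q}_p')^2 - \tfrac{1}{p-1}\|\mathcal{Q}_p\|_{L^2}^2 \neq 0$, all bounded away from zero. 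The off-diagonal entries are $O(b_c + \mathcal{N}^{1/2})$, their leading terms vanishing by parity of $\mathcal{Q}_p$ (for instance $(\mathcal{Q}_p', \Lambda\mathcal{Q}_p) = 0$ and $\int y(\Lambda\mathcal{Q}_p)^2 = 0$) and by the identity $(\Lambda f, f) = -\sigma_c \int f^2$. The system is therefore uniformly invertible.

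The dominant source on the right is $(\Phi_b, \mathcal{Q}_p) = C_p \tilde{b} b_c \|\mathcal{Q}_p\|_{L^2}^2 + O(\tilde{b}^2)$ from Lemma 2.2(2); inverting the system isolates $b_s + c_p \tilde{b}b_c$ with
\begin{equation*}
c_p = -\frac{16 C_p \|\mathcal{Q}_p\|_{L^2}^2}{\bigl(\int \mathcal{Q}_p\bigr)^{2}} = 2 + O(|p-5|),
\end{equation*}
using $C_p = -\|\mathcal{Q}_p\|_{L^1}^2/(8\|\mathcal{Q}_p\|_{L^2}^2) + O(|p-5|)$ from Proposition 2.1(2). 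The nonlinear terms $((R_b(\varepsilon))_y, \cdot)$ and $((R_{NL}(\varepsilon))_y, \cdot)$ paired with $\mathcal{Q}_p, \Lambda\mathcal{Q}_p, y\Lambda\mathcal{Q}_p$ and their derivatives are controlled via $\int \varepsilon^2 e^{-|y|/2} \lesssim \mathcal{N} + b_c^{20}$ from (2.43) and $\int |\varepsilon|^p \lesssim b_c^{5/2}$ from (2.42), producing the $O(\mathcal{N}^{1/2} + b_c^{5/2})$ remainder in (3.7)--(3.8) and the $O(b_c \mathcal{N}^{1/2} + b_c^3)$ remainder in (3.9) after the extra $b_c$ factor gained from the third row of the inversion. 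The main obstacle is to extract the precise constant $c_p = 2 + O(|p-5|)$: this requires tracking the matrix inversion at $p = 5$ together with the $L^2$-critical identities $L(\Lambda\mathcal{Q}) = -2\mathcal{Q}$, $(\Lambda\mathcal{Q}, \mathcal{Q}) = 0$, and the asymptotic (2.28) from Lemma 2.2(4).
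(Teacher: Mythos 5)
Your proposal follows essentially the same route as the paper: differentiate the three orthogonality conditions, substitute the $\varepsilon$-equation, and invert the resulting $3\times 3$ system for $(b_s,\ \lambda_s/\lambda+b,\ x_s/\lambda-1)$, with the constant $c_p$ produced exactly as you describe from $(P_b,\mathcal Q_p)=\tfrac1{16}(\int\mathcal Q_p)^2+O(|p-5|)$, $(\Phi_b,\mathcal Q_p)=C_pb_c\tilde b(Q_b,\mathcal Q_p)+O(b_c^3)$, and $C_p=-\|\mathcal Q_p\|_{L^1}^2/(8\|\mathcal Q_p\|_{L^2}^2)+O(|p-5|)$, giving $c_p=2+O(|p-5|)$ since $\int\mathcal Q_p=\|\mathcal Q_p\|_{L^1}$.

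One caveat: the blanket claim that all off-diagonal matrix entries are $O(b_c+\mathcal N^{1/2})$ is not justified. The entries $(\Lambda Q_b,\mathcal Q_p)$, $(Q_b',\mathcal Q_p)$, $(Q_b',\Lambda\mathcal Q_p)$, $(\Lambda Q_b, y\Lambda\mathcal Q_p)$ are indeed $O(b_c)$ by parity and the $\Lambda$-integration-by-parts identity, but $(P_b,\Lambda\mathcal Q_p)$ and $(P_b,y\Lambda\mathcal Q_p)$ have no reason to be small, since $P_b=\partial_b Q_b$ is not (even approximately) of definite parity; the paper's estimate (3.11) indeed keeps $|b_s|$ with an $O(1)$ coefficient. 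This does not affect the conclusion, because the determinant is dominated by the $(P_b,\mathcal Q_p)$-entry times the $2\times 2$ minor $(\Lambda Q_b,\Lambda\mathcal Q_p)(Q_b',y\Lambda\mathcal Q_p)+O(b_c^2)$, so the system is uniformly invertible regardless of those two entries; alternatively one can proceed as the paper does, first bounding $|\lambda_s/\lambda+b|+|x_s/\lambda-1|\lesssim b_c^{5/2}+|b_s|+\mathcal N^{1/2}$, then bounding $|b_s|$ from the $\mathcal Q_p$-row with the extra $b_c$ gain, and substituting back. With that one correction, the argument is sound and matches the paper's.
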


\begin{proof}
The proof of \eqref{ME} follows from a direct computation and the equation of $u(t)$.  Now we prove \eqref{MES1}--\eqref{MES3}. Let us differentiate the orthogonality condition $(\varepsilon,\Lambda \mathcal{Q}_p)=(\varepsilon,y\Lambda \mathcal{Q}_p)=0$ and use \eqref{CD2} to obtain:
\begin{equation*}\begin{split}
&\quad\;\bigg{|}\bigg{(}\frac{\lambda_s}{\lambda}+b\bigg{)}(\Lambda Q_b, \Lambda \mathcal{Q}_p)\bigg{|}+\bigg{|}\bigg{(}\frac{x_s}{\lambda}-1\bigg{)}(Q_b', y\Lambda \mathcal{Q}_p)\bigg{|}\\
&\lesssim \bigg{|}\bigg{(}\frac{\lambda_s}{\lambda}+b\bigg{)}(\Lambda Q_b,y \Lambda \mathcal{Q}_p)\bigg{|}+\bigg{|}\bigg{(}\frac{x_s}{\lambda}-1\bigg{)}(Q'_b, \Lambda \mathcal{Q}_p)\bigg{|}+b_c|\tilde{b}|+|b_s|\\
&\quad+\int\big{(}\varepsilon^2e^{-\frac{|y|}{2}}+|\varepsilon|^p\big{)}+b_c\bigg{(}\int\varepsilon^2e^{-\frac{|y|}{2}}\bigg{)}^{\frac{1}{2}}+\big{|}(\varepsilon, L(\Lambda \mathcal{Q}_p)')+(\varepsilon, L(y\Lambda \mathcal{Q}_p)')\big{|}\\
&\lesssim  \bigg{|}\bigg{(}\frac{\lambda_s}{\lambda}+b\bigg{)}(\Lambda Q_b,y \Lambda \mathcal{Q}_p)\bigg{|}+\bigg{|}\bigg{(}\frac{x_s}{\lambda}-1\bigg{)}(Q'_b, \Lambda \mathcal{Q}_p)\bigg{|}+b_c^{\frac{5}{2}}+|b_s|+\bigg{(}\int\varepsilon^2e^{-\frac{|y|}{2}}\bigg{)}^{\frac{1}{2}}.
\end{split}\end{equation*}
From \eqref{ASB}, we have for all $y\in\mathbb{R}$:
$$|Q_b(y)-\mathcal{Q}_p(y)|\lesssim b_c,$$
which implies:
\begin{align*}
\big{|}(\Lambda Q_b, \Lambda \mathcal{Q}_p)-(\Lambda \mathcal{Q}_p, \Lambda \mathcal{Q}_p)\big{|}\leq \|Q_b-\mathcal{Q}_p\|_{L^{\infty}}\|\Lambda^*\Lambda \mathcal{Q}_p\|_{L^1}=O(b_c).
\end{align*}
hence $(\Lambda Q_b, \Lambda \mathcal{Q}_p)=\|\Lambda \mathcal{Q}_p\|_{L^2}^2+O(b_c).$
Similarly, we have
\begin{equation*}
(Q_b', y\Lambda \mathcal{Q}_p)=\|\Lambda \mathcal{Q}_p\|_{L^2}^2+O(b_c),\quad
(\Lambda Q_b,y \Lambda \mathcal{Q}_p)=O(b_c),\quad(Q'_b, \Lambda \mathcal{Q}_p)=O(b_c).
\end{equation*}
Combining these estimates with \eqref{CD1} we have:
\begin{equation}\label{1}
\bigg{|}\frac{\lambda_s}{\lambda}+b\bigg{|}+\bigg{|}\frac{x_s}{\lambda}-1\bigg{|}\lesssim b_c^{\frac{5}{2}}+|b_s|+\mathcal{N}^{\frac{1}{2}}.
\end{equation}

Now we differentiate  the orthogonality condition $(\varepsilon,\mathcal{Q}_p)=0$. A similar computation shows:
\begin{equation}\label{2}\begin{split}
&\quad|(P_b,\mathcal{Q}_p)b_s-(\Phi_b,\mathcal{Q}_p)|\\
&\lesssim O(b_c)\bigg{(}\bigg{|}\frac{\lambda_s}{\lambda}+b\bigg{|}+\bigg{|}\frac{x_s}{\lambda}-1\bigg{|}\bigg{)}+\int(\varepsilon^2e^{-\frac{|y|}{2}}+|\varepsilon|^p)+b_c\Big{(}\int\varepsilon^2e^{-\frac{|y|}{2}}\Big{)}^{\frac{1}{2}}\\
&=O(b_c)\bigg{(}b_c^{\frac{5}{2}}+\mathcal{N}^{\frac{1}{2}}+\bigg{|}\frac{\lambda_s}{\lambda}+b\bigg{|}+\bigg{|}\frac{x_s}{\lambda}-1\bigg{|}\bigg{)}.
\end{split}\end{equation}
Observe from \eqref{APP}  and \eqref{NV}:
\begin{equation*}\begin{split}
&(\Lambda Q_b,\mathcal{Q}_p)=O(b_c),\quad (Q_b',\mathcal{Q}_p)=O(b_c),\\
&(P_b,\mathcal{Q}_p)=\frac{1}{16}\|\mathcal{Q}_p\|_{L^1}^2+O(|p-5|)>0,\\
&(\Phi_b,\mathcal{Q}_p)=C_pb_c\tilde{b}(Q_b,\mathcal{Q}_p)+O(|\tilde{b}|^2+e^{-\frac{1}{2b_c}})=-\tilde{c}_p\|\mathcal{Q}_p\|_{L^1}^2 b_c\tilde{b}+O(b_c^{3}),
\end{split}\end{equation*}
with $\tilde{c}_p=\frac{1}{8}+O(|p-5|)>0$. First, from \eqref{2} we have:
\begin{equation}\label{34}
|b_s|\lesssim b_c^{\frac{5}{2}}+O(b_c)\bigg{(}\mathcal{N}^{\frac{1}{2}}+\bigg{|}\frac{\lambda_s}{\lambda}+b\bigg{|}+\bigg{|}\frac{x_s}{\lambda}-1\bigg{|}\bigg{)}.
\end{equation}
Injecting \eqref{34} into \eqref{1}, we obtain \eqref{MES1} and \eqref{MES2}. Moreover \eqref{2} implies:
\begin{equation}\label{3}
|b_s+c_pb_c\tilde{b}|=O(b_c)\bigg{(}b_c^{2}+\mathcal{N}^{\frac{1}{2}}+\bigg{|}\frac{\lambda_s}{\lambda}+b\bigg{|}+\bigg{|}\frac{x_s}{\lambda}-1\bigg{|}\bigg{)},
\end{equation}
where $c_p=2+O(|p-5|)$. Then \eqref{MES3} follows from \eqref{MES1}, \eqref{MES2} and \eqref{3}, which concludes the proof of the proposition.
\end{proof}

\section{Monotonicity of the energy}
This section is devoted to derive a control of the $L^2$ norm of $\varepsilon_y$ by the energy conservation law and monotonicity. We will first give a control of $\|\varepsilon_y\|_{L^2}$ on the whole line, which proves the bootstrap bound \eqref{BB5}. But furthermore, we will show that on the half line $[\kappa B,+\infty)$, there is a much better bound for the $L^2$ norm of $\varepsilon_y$, which comes from the monotonicity of the localized energy%
\footnote{See \eqref{42}.}%
. Then by Gagliardo-Nirenberg inequality we can get a good control for the localized $L^2$ norm of $\varepsilon$.
\begin{lemma}
For all $ s\in[0,s^*)$, the following estimates hold:
\begin{gather}
\int\varepsilon^2_y(s)\lesssim b^{\frac{3}{2}+\nu}_c\label{ECL1},\\
\int_{y>\kappa B}\varepsilon_y^2(s)\lesssim b_c^{\frac{55}{7}}\label{ECL2}.
\end{gather}
\end{lemma}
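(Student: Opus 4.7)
The plan is to prove \eqref{ECL1} from energy conservation and the orthogonality conditions, and to prove the sharper right-localized bound \eqref{ECL2} by building a Lyapunov functional for a localized energy density on the right.

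For \eqref{ECL1}, set $d=\frac{p+3}{p-1}$. Direct substitution in \eqref{GD} gives
$$\lambda^{d}E(u_0)=E(Q_b+\varepsilon)=E(Q_b)-\int(Q_b''+Q_b^p)\varepsilon+\tfrac{1}{2}\int\varepsilon_y^2-\tfrac{1}{p+1}\int R(\varepsilon),$$
with $R(\varepsilon)=|Q_b+\varepsilon|^{p+1}-Q_b^{p+1}-(p+1)Q_b^p\varepsilon$. The orthogonality $(\varepsilon,\mathcal{Q}_p)=0$ is exploited by writing $Q_b''+Q_b^p=\mathcal{Q}_p+(Q_b''-\mathcal{Q}_p'')+(Q_b^p-\mathcal{Q}_p^p)$: the first term drops out, while the difference is $O(b_c)$ and essentially supported in a bounded region by Lemma \ref{SSP}(1), so its integral against $\varepsilon$ is $O(b_c\mathcal{N}^{1/2})$ via \eqref{CD1}. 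The nonlinear remainder is estimated by $\int Q_b^{p-1}\varepsilon^2+\int|\varepsilon|^{p+1}$, both negligible by \eqref{CD1}--\eqref{CD2}. Using $|E(Q_b)|\lesssim b_c^3+|\tilde b|\lesssim b_c^{3/2+\nu}$ from Lemma \ref{SSP}(3) and \eqref{BS1}, and bounding the global factor by $|\lambda^{d}E(u_0)|\le|\lambda_0^{d}E(u_0)|=|E(Q_{b_0}+\varepsilon_0)|\lesssim b_c^3$ (where the first inequality uses the monotonicity $\lambda_s/\lambda=-b_c(1+o(1))<0$ deduced from \eqref{MES1} and the bootstrap assumptions), we obtain $\int\varepsilon_y^2\lesssim b_c^{3/2+\nu}$.

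For \eqref{ECL2}, I would introduce a Lyapunov functional for the energy restricted to a neighborhood of $\{y\geq\kappa B\}$. Let $\tilde\psi$ be a smooth nondecreasing cutoff with $\tilde\psi\equiv 0$ on $\{y\le\kappa B/2\}$ and $\tilde\psi\equiv 1$ on $\{y\geq\kappa B\}$, and set
$$J(s)=\int\Bigl[\tfrac{1}{2}(Q_b+\varepsilon)_y^2-\tfrac{1}{p+1}|Q_b+\varepsilon|^{p+1}\Bigr]\tilde\psi(y)\,dy.$$
Since $|\partial_y^k Q_b(y)|\lesssim e^{-y/10}$ for $y\geq 0$ by Lemma \ref{SSP}(1), $Q_b$ and its derivatives are $O(e^{-\kappa B/20})$ on $\text{supp}\,\tilde\psi$, much smaller than any polynomial power of $b_c$. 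Consequently $J(s)$ is, to leading order, $\tfrac{1}{2}\int\varepsilon_y^2\tilde\psi$, which controls $\tfrac{1}{2}\int_{y>\kappa B}\varepsilon_y^2$. The main task is to compute $dJ/ds$ using \eqref{ME} and show a monotonicity estimate of the form $dJ/ds\lesssim b_c^{62/7}$: the dispersive part produces the standard favorable term $-\tfrac{3}{2}\int\varepsilon_{yy}^2\tilde\psi'\le 0$ after integration by parts; the modulation contributions are absorbed through \eqref{MES1}--\eqref{MES3}; $\Phi_b$ is negligible on $\text{supp}\,\tilde\psi$ by \eqref{APP}; and the remainders $R_b(\varepsilon)$ and $R_{\text{NL}}(\varepsilon)$ are controlled via \eqref{CD1}, \eqref{CD2} and the bootstrap. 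Integrating on $[0,s]$ with $s\lesssim b_c^{-1}$ and the initial bound $J(0)\lesssim b_c^{30}$ (from \eqref{HS}) yields $J(s)\lesssim b_c^{55/7}$, whence \eqref{ECL2}.

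The main technical obstacle is the careful bookkeeping in the computation of $dJ/ds$, where one must verify that every error term is strictly subcritical with respect to the target power $b_c^{55/7}$. In particular, the contributions from $\int|\varepsilon|^{p+1}\tilde\psi$ on the right require combining the bootstrap bound \eqref{BS4} with Gagliardo--Nirenberg interpolation, and the interaction terms between $Q_b$ and $\varepsilon$ on $\text{supp}\,\tilde\psi$ must exploit the super-exponential smallness of $Q_b$ there to extract the needed powers of $b_c$.
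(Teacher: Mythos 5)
Your treatment of \eqref{ECL1} via energy conservation, orthogonality $(\varepsilon,\mathcal{Q}_p)=0$, and the bounds from Lemma \ref{SSP} is essentially the paper's argument and is correct.

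For \eqref{ECL2} the conceptual plan (a Lyapunov functional for the right-localized energy) is the right idea, but the execution as written does not close, and the reason is exactly the variable you chose to integrate in. You form $J(s)=\int[\tfrac12 w_y^2-\tfrac{1}{p+1}|w|^{p+1}]\tilde\psi\,dy$ in the rescaled variables and conclude by ``integrating on $[0,s]$ with $s\lesssim b_c^{-1}$.'' That bound on $s$ is false: since $\lambda_s/\lambda\sim -b_c$, one has $\lambda(s)\sim\lambda(0)e^{-b_c s}$, hence $s^*\to+\infty$ as $T^*\to T$ (indeed $s\sim -\tfrac{1}{3b_c}\log(T-t)$). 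A direct integration $J(s)\le J(0)+s\,\sup|dJ/ds|$ therefore gives nothing uniformly in $s$. To make a rescaled-time argument work you would instead have to extract the damping hidden in the scaling: your $J$ is essentially $\lambda^{2(1-\sigma_c)}\widetilde E$, and since $\tfrac{\lambda_s}{\lambda}<0$ and $J\gtrsim\int\varepsilon_y^2\tilde\psi\ge 0$ up to controllable errors, one expects a Gr\"onwall inequality $\tfrac{dJ}{ds}+\mu b_c J\lesssim b_c^{62/7}$ rather than a pure upper bound on $dJ/ds$; only then does $J\lesssim b_c^{-1}\cdot b_c^{62/7}$ follow. You do not make this damping explicit, and without it the argument has a genuine gap.

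The paper sidesteps this entirely by working in the \emph{original} time $\tau$ and space $x$: it sets $\widetilde E(\tau)=\int\big[\tfrac12 u_x^2-\tfrac{1}{p+1}|u|^{p+1}\big]\Theta(\tilde x(\tau))\,dx$ with a cutoff $\Theta$ that moves with $x(\tau)$ and is stretched by $\lambda(\tau)\sqrt{B}$, computes $\tfrac{d}{d\tau}\widetilde E$ via Kato's identity for the \emph{original} gKdV flow (so no modulation terms in the equation, only in the moving weight), establishes $\tfrac{d}{d\tau}\widetilde E\le Cb_c^9\lambda(\tau)^{-3-2(1-\sigma_c)}$, and then integrates in $\tau$ over the bounded interval $[0,t]$ using $\int_0^t\lambda^{-\beta}d\tau\lesssim b_c^{-1}\lambda(t)^{3-\beta}$ for $\beta>3$. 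Note also the $\sqrt{B}$-stretching of the transition zone, which is what tames the $\theta''$ and the $|y|\theta'$ terms (terms $II_2$ and $IV$ in the paper); your fixed $\tilde\psi$ with an $O(B)$-width transition would need the same device, or a separate argument, to control those contributions. So: the functional is the right object, but you must either integrate in $\tau$ as the paper does, or genuinely exploit the scaling damping in $s$, and in either case build the $\sqrt{B}$ stretching into the cutoff.
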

\begin{remark}
\eqref{ECL1} is the desired bootstrap bound \eqref{BB5}.
\end{remark}
\begin{proof}[Proof of Lemma 4.1]
The first estimate \eqref{ECL1} is a consequence of the energy conservation law. We write down the energy equality explicitly:
\begin{multline}\label{ECL}
2\lambda(s)^{2(1-\sigma_c)}E(u_0)=2E(Q_b)+\int\varepsilon_y\big{(}Q_b-\mathcal{Q}_p\big{)}_y\\+\int\varepsilon_y^2-\int\varepsilon(\mathcal{Q}_p)_{yy}-\frac{2}{p+1}\int\big{(}(Q_b+\varepsilon)^{p+1}-Q_b^{p+1}\big{)}.\\
\end{multline}
From \eqref{BS3} and \eqref{MES1}, we know for all $ s\in[0,s^*)$
\begin{equation}
-(1+\nu)b_c\leq\frac{\lambda_s}{\lambda}\leq -(1-\nu)b_c<0.
\end{equation}
Therefore $\lambda(s)$ is decreasing on $[0,s^*)$, then we have:
\begin{equation*}\begin{split}
\int\varepsilon_y^2&\lesssim \lambda(s)^{2(1-\sigma_c)}|E(u_0)|+|\tilde{b}|+b_c^3+\|(Q_b-\mathcal{Q}_p)_y\|_{L^2}^2\\
&\quad+\bigg{(}\int \varepsilon^2e^{-\frac{|y|}{2}}\bigg{)}^{\frac{1}{2}}+\int (|\varepsilon|^{p}+Q_b^{p})|\varepsilon|\\
&\lesssim b_c^{\frac{3}{2}+\nu}+\lambda(0)^{2(1-\sigma_c)}|E(u_0)|+\int |\varepsilon|^{p+1}+\int_{y>\kappa B}Q_b^{p}|\varepsilon|\\
&\quad+\int_{|y|\leq\kappa B}Q_b^{p}|\varepsilon|+\int_{y<-\kappa B}Q_b^{p}|\varepsilon|\\
&\lesssim b_c^{\frac{3}{2}+\nu}+\lambda(0)^{2(1-\sigma_c)}|E(u_0)|+b_c^3+e^{-B}\bigg{(}\int_{y>\kappa B} \varepsilon^2e^{-\frac{|y|}{10}}\bigg{)}^{\frac{1}{2}}\\
&\quad+\bigg{(}\int_{|y|<\kappa B}|\varepsilon|^2\bigg{)}^{\frac{1}{2}}+\bigg{(}\int|\varepsilon|^{p_0}\bigg{)}^{\frac{1}{p_0}}\bigg{(}\int_{y<-\kappa B} Q_b^{pp_0'}\bigg{)}^{\frac{1}{p_0'}}\\
&\lesssim b_c^{\frac{3}{2}+\nu}+\lambda(0)^{2(1-\sigma_c)}|E(u_0)|.
\end{split}\end{equation*}
Here we use the fact that $|Q_b(y)|\lesssim b_c$, if $y<-\kappa B$, and $Q_b$ decays exponentially on the right.

So it remains to estimate $\lambda(0)^{2(1-\sigma_c)}|E(u_0)|$. We let $s=0$ in \eqref{ECL}, from the assumption of the initial data, we have:
\begin{equation*}
\lambda(0)^{2(1-\sigma_c)}|E(u_0)|\lesssim |E(Q_{b(0)})|+\|\varepsilon(0)\|_{H^1}\lesssim b_c^2+|\tilde{b}(0)|\lesssim b_c^{\frac{3}{2}+\nu},
\end{equation*}
then \eqref{ECL1} follows.

Now we prove \eqref{ECL2}. We use a bootstrap argument on $[0,T^*)$. We assume that for all $ t\in[0,T^*)$, we have:
\begin{equation}\label{400}
\int_{y>\kappa B}\varepsilon_y^2(t)\leq b_c^{\frac{15}{2}}.
\end{equation}
Since this estimate is satisfied for $t=0$, we only need to improve this estimate to:
\begin{equation}\label{401}
\int_{y>\kappa B}\varepsilon_y^2(t)\lesssim b_c^{\frac{55}{7}}\quad\text{for } \forall t\in[0,T^*).
\end{equation}
To do this we first choose a smooth function $\theta$ such that:
\begin{equation}
\theta(y)=e^{-|y|} \text{ for } |y|>1,\quad\theta(y)\geq\frac{1}{e}  \text{ for } |y|<1.
\end{equation}
We then define
$$\Theta(y)=\frac{1}{K}\int_{-\infty}^y\theta(y')dy',$$
where $K=\int_{-\infty}^{+\infty} \theta(y')dy'$.

Let $t\in[0,T^*)$ be any fixed time. For all $ \tau\in[0,t]$, we denote:
$$\tilde{x}(\tau)=\frac{1}{\sqrt{B}}\bigg{(}\frac{x-x(\tau)}{\lambda(\tau)}-\kappa B\bigg{)},\quad \tilde{y}=\frac{y-\kappa B}{\sqrt{B}},$$
$$\widetilde{E}(\tau)=\int\Big{(}\frac{1}{2}|u_x(\tau)|^2-\frac{1}{p+1}|u(\tau)|^{p+1}\Big{)}\Theta\big{(}\tilde{x}(\tau)\big{)}dx.$$
Observe that $\Theta(\tilde{y})\leq e^{-\frac{\kappa\sqrt{B}}{2}}\leq b_c^{20}$, if $y<\kappa B/2$, so we have:
\begin{equation}\label{41}\begin{split}
&\quad\lambda(t)^{2(1-\sigma_c)}\widetilde{E}(t)\\
&=\frac{1}{2}\int\big{(}(Q_b)_y+\varepsilon_y\big{)}^2\Theta(\tilde{y})dy-\frac{1}{p+1}\int|Q_b+\varepsilon|^{p+1}\Theta(\tilde{y})dy\\
&\gtrsim \int_{y>\kappa B}\varepsilon_y^2(t)-\int_{y>\frac{\kappa B}{2}}\Big{(}|(Q_b)_y|^2+|Q_b|^{p+1}\Big{)}-e^{-\frac{\kappa \sqrt{B}}{2}}\int_{y<\frac{\kappa B}{2}}\Big{(}|(Q_b)_y|^2+|Q_b|^{p+1}\Big{)}\\
&\quad-\int_{y>\frac{\kappa B}{2}}|\varepsilon|^{p+1}-e^{-\frac{\kappa \sqrt{B}}{2}}\int_{y<\frac{\kappa B}{2}}|\varepsilon|^{p+1}.
\end{split}\end{equation}

Next from \eqref{BS3}, \eqref{BS4}, \eqref{400} and localized Gagliardo-Nirenberg inequality, we know that (recall $p_0=\frac{5}{2}$):
\begin{align}
&\int_{y>\kappa B} |\varepsilon|^{p+1}\lesssim \bigg{(}\int|\varepsilon|^{p_0}\bigg{)}^{\frac{p+3}{p_0+2}}\bigg{(}\int_{y>\kappa B}\varepsilon_y^2\bigg{)}^{\frac{p+1-p_0}{p_0+2}}\lesssim b_c^{\frac{173p-156}{90}}\leq b_c^{\frac{55}{7}}\label{402},\\
&\|\varepsilon\|_{L^{\infty}(y>\kappa B)}\lesssim \bigg{(}\int|\varepsilon|^{p_0}\bigg{)}^{\frac{1}{p_0+2}}\bigg{(}\int_{y>\kappa B}\varepsilon_y^2\bigg{)}^{\frac{1}{p_0+2}}\leq b_c^{\frac{173}{90}}\leq b_c^{\frac{3}{2}}\label{406}.
\end{align}
On the other hand, by Sobolev embedding we can show:
\begin{equation}\label{405}
\|\varepsilon\|_{L^{\infty}(|y|<\kappa B)}\lesssim \mathcal{N}^{\frac{1}{2}}\leq b_c^{\frac{3}{2}},
\end{equation}
hence
\begin{equation}\label{403}
\int_{\frac{\kappa B}{2}<y<\kappa B}|\varepsilon|^{p+1}\leq \|\varepsilon\|^{p-1}_{L^{\infty}(|y|<\kappa B)}\bigg{(}\int_{|y|\leq \kappa B}\varepsilon^2\bigg{)}\leq b_c^9.
\end{equation}
Injecting \eqref{402} and \eqref{403} into \eqref{41} yields:
\begin{equation}\label{404}
\int_{y>\kappa B}\varepsilon_y^2\lesssim b_c^{\frac{55}{7}}+\lambda(t)^{2(1-\sigma_c)}\widetilde{E}(t).
\end{equation}
Therefore, it remains to estimate $\widetilde{E}(t)$. We first use Kato's Localization identity for energy to compute:
\begin{equation*}\begin{split}
\frac{d}{d{\tau}}\widetilde{E}(\tau)=&-\frac{1}{2}\int(u_{xx}+u|u|^{p-1})^2g_x-\int u_{xx}^2g_x\\
&+p\int u|u|^{p-2}u_x^2g_x+\frac{1}{2}\int u_x^2g_{xxx}\\
&-\frac{x_t(\tau)}{\sqrt{B}\lambda(\tau)}\int\Big{(}\frac{1}{2}|u_x(\tau)|^2-\frac{1}{p+1}|u(\tau)|^{p+1}\Big{)}\theta\big{(}\tilde{x}(\tau)\big{)}dx\\
&-\frac{\lambda_t(\tau)}{\sqrt{B}\lambda(\tau)}\int\Big{(}\frac{1}{2}|u_x(\tau)|^2-\frac{1}{p+1}|u(\tau)|^{p+1}\Big{)}\bigg{(}\frac{x-x(\tau)}{\lambda(\tau)}\bigg{)}\theta\big{(}\tilde{x}(\tau)\big{)}dx\\
=&I+II+III+IV,
\end{split}\end{equation*}
where $g(x,\tau)=\Theta\big{(}\tilde{x}(\tau)\big{)}$.

We claim that for some universal constant $C>0$, there holds:
\begin{equation}\label{42}
\frac{d}{d{\tau}}\widetilde{E}(\tau)\leq \frac{Cb_c^9}{\lambda(\tau)^{3+2(1-\sigma_c)}}
\end{equation}

First $I\leq0$, since $g$ is nondecreasing in $x$. We then deal with $III$ and $IV$.
From \eqref{MES1} and \eqref{MES2} we have:
\begin{equation*}
x_t\sim\frac{1}{\lambda^2},\quad \lambda_t\sim-\frac{b_c}{\lambda^2}.
\end{equation*}

For $III$, we use \eqref{406}, \eqref{405} and the fact that $|\theta(\tilde{y})|\leq e^{-\frac{\kappa \sqrt{B}}{2}}$, if $y\leq \kappa B/2$ to estimate:
\begin{equation}\label{43}\begin{split}
III  &\leq -\frac{1}{4\sqrt{B}\lambda^3(\tau)}\int|u_x(\tau)|^2\theta\big{(}\tilde{x}(\tau)\big{)}+\frac{C}{\sqrt{B}\lambda(\tau)^{3+2(1-\sigma_c)}}\int|\varepsilon(\tau)+Q_{b(\tau)}|^{p+1}\theta(\tilde{y})\\
&\leq -\frac{1}{4\sqrt{B}\lambda^3(\tau)}\int|u_x(\tau)|^2\theta\big{(}\tilde{x}(\tau)\big{)}\\
&\quad +\frac{1}{\sqrt{B}\lambda(\tau)^{3+2(1-\sigma_c)}}\bigg{(}\|\varepsilon\|_{L^{\infty}(y>\frac{\kappa B}{2})}^{p+1}\int_{y>\kappa B/2}\theta(\tilde{y})dy+e^{-\frac{\kappa \sqrt{B}}{2}}\int_{y<\kappa B/2}|\varepsilon|^{p+1}\bigg{)}\\
&\quad+\frac{1}{\sqrt{B}\lambda(\tau)^{3+2(1-\sigma_c)}}\bigg{(}e^{-\frac{\kappa \sqrt{B}}{2}}\int_{y<\kappa B/2}|Q_{b(\tau)}|^{p+1}+\int_{y>\kappa B/2}|Q_{b(\tau)}|^{p+1}\bigg{)}\\
&\leq -\frac{1}{4\sqrt{B}\lambda^3(\tau)}\int|u_x(\tau)|^2\theta\big{(}\tilde{x}(\tau)\big{)}+\frac{Cb_c^9}{\lambda(\tau)^{3+2(1-\sigma_c)}}.
\end{split}\end{equation}

For $IV$, similarly there holds:
\begin{equation*}\begin{split}
IV &\leq \frac{b_c}{\sqrt{B}\lambda^3(\tau)}\int|u_x(\tau)|^2\bigg{|}\frac{x-x(\tau)}{\lambda(\tau)}\bigg{|}\theta\big{(}\tilde{x}(\tau)\big{)}+\frac{Cb_c^{9}}{\lambda(\tau)^{3+2(1-\sigma_c)}}\\
&=\frac{b_c}{\sqrt{B}\lambda(\tau)^{3+2(1-\sigma_c)}}\int|y|\varepsilon_y^2(\tau)\theta(\tilde{y})+\frac{Cb_c^{9}}{\lambda(\tau)^{3+2(1-\sigma_c)}}.
\end{split}\end{equation*}
We then divide the integral $\int|y|\varepsilon_y^2(\tau)\theta(\tilde{y})$ into 2 parts: $\int_{|y-\kappa B|>B}$ and $\int_{|y-\kappa B|\leq B}$. \\For the first part, we have $|y\theta(\tilde{y})|\leq e^{-\frac{\kappa \sqrt{B}}{2}}$ on this region, hence:
\begin{equation*}
\int_{|y-\kappa B|>B}|y|\varepsilon_y^2(\tau)\theta(\tilde{y})\leq e^{-\frac{\kappa \sqrt{B}}{2}}\int\varepsilon_y^2(\tau)\leq Cb_c^{9}.
\end{equation*}
For another part, we have $|yb_c|\ll 1$ on this region, hence:
\begin{align*}
\frac{b_c}{\sqrt{B}\lambda(\tau)^{3+2(1-\sigma_c)}}\int_{|y-\kappa B|\leq B}|y|\varepsilon_y^2(\tau)\theta(\tilde{y})\leq \frac{1}{100\sqrt{B}\lambda^3(\tau)}\int\frac{1}{2}|u_x(\tau)|^2\theta\big{(}\tilde{x}(\tau)\big{)}.
\end{align*}
Collecting the above estimates, we obtain:
\begin{equation}\label{44}
IV \leq \frac{1}{100 \sqrt{B}\lambda^3(\tau)}\int|u_x(\tau)|^2\theta\big{(}\tilde{x}(\tau)\big{)}+\frac{Cb_c^{9}}{\lambda(\tau)^{3+2(1-\sigma_c)}}.
\end{equation}

Finally, we estimate $II$:
\begin{equation*}\begin{split}
II&\leq \frac{C}{\sqrt{B}\lambda(\tau)^{3+2(1-\sigma_c)}}\int|\varepsilon(\tau)+Q_{b(\tau)}|^{p-1}|\varepsilon_y(\tau)+(Q_{b(\tau)})_y|^2\theta(\tilde{y})\\
&\quad+\frac{C}{B^{\frac{3}{2}}\lambda^3(\tau)}\int|u_x(\tau)|^2\theta''\big{(}\tilde{x}(\tau)\big{)}\\
&=II_1+II_2.
\end{split}\end{equation*}
For the first term $II_1$, we divide the integral into 2 parts $\int_{y<\kappa B/2}$ and $\int_{y>\kappa B/2}$ as before, to obtain:
\begin{equation*}\begin{split}
II_1&\leq \frac{C}{\sqrt{B}\lambda(\tau)^{3+2(1-\sigma_c)}}\int\theta(\tilde{y})\Big{(}|\varepsilon|^{p-1}\varepsilon_y^2+|\varepsilon|^{p-1}|Q_b'|^2+|Q_b|^{p-1}\big{(}|\varepsilon_y|^2+|Q_b'|^2\big{)}\Big{)}\\
& \leq \frac{C}{\sqrt{B}\lambda(\tau)^{3+2(1-\sigma_c)}}\bigg{(}\|\varepsilon\|^{p-1}_{L^{\infty}(y>\frac{\kappa B}{2})}\int_{y>\frac{\kappa B}{2}}\varepsilon^2_y+e^{-\frac{\kappa \sqrt{B}}{2}}\int_{y<\frac{\kappa B}{2}}\big{(}|(Q_b)_y|^2+\varepsilon_y^2\big{)}\\
&\qquad\qquad\qquad\qquad\quad+\int_{y>\frac{\kappa B}{2}}|(Q_b)_y|^2|Q_b|^{p-1}+\int_{y>\frac{\kappa B}{2}}e^{-\frac{|y|}{10}}\big{(}|\varepsilon|^{p-1}+\varepsilon_y^2\big{)}dy\bigg{)}\\
&\leq \frac{C}{\sqrt{B}\lambda(\tau)^{3+2(1-\sigma_c)}}\bigg{(}\|\varepsilon\|^{p-1}_{L^{\infty}(y>\frac{\kappa B}{2})}\int_{y>\frac{\kappa B}{2}}\varepsilon^2_y(\tau)+b_c^9\bigg{)}.
\end{split}\end{equation*}
Then from \eqref{406}, \eqref{405} and the fact that:
\begin{align*}
\int_{y>\kappa B/2}\varepsilon^2_y(\tau)\leq \int_{\kappa B>y>\kappa B/2}\varepsilon^2_y(\tau)+\int_{y>\kappa B}\varepsilon^2_y(\tau)\leq b_c^3,
\end{align*}
we obtain:
$$\|\varepsilon\|^{p-1}_{L^{\infty}(y>\kappa B/2)}\int_{y>\kappa B/2}\varepsilon^2_y(\tau)\lesssim b_c^{\frac{3(p-1)}{2}}\times b_c^3\leq b_c^9,$$
hence
\begin{equation}\label{45}
II_1\leq\frac{Cb_c^{9}}{\lambda(\tau)^{3+2(1-\sigma_c)}}.
\end{equation}
For the second term $II_2$, from the definition of $\theta$, we have $|\theta''|\lesssim \theta$, hence:
\begin{equation}\label{46}
II_2\leq \frac{1}{100\sqrt{B}\lambda^3(\tau)}\int|u_x(\tau)|^2\theta\big{(}\tilde{x}(\tau)\big{)}.
\end{equation}
Collecting \eqref{43}, \eqref{44}, \eqref{45} and \eqref{46}, we obtain \eqref{42}.

Observe that for $\beta>3$ there holds:
\begin{equation}\label{48}
\int_0^t\frac{1}{\lambda^{\beta}(\tau)}d\tau\leq -2\int_0^t\frac{\lambda_t(\tau)}{b_c\lambda^{\beta-2}(\tau)}d\tau\leq \frac{2}{(\beta-3)b_c\lambda^{\beta-3}(t)}.
\end{equation}
Integrating \eqref{42} from $0$ to $t$ yields:
\begin{equation}\label{47}\begin{split}
\lambda(t)^{2(1-\sigma_c)}\widetilde{E}(t)&\lesssim\lambda(t)^{2(1-\sigma_c)}\widetilde{E}(0)+b_c^{8}\lesssim\lambda(0)^{2(1-\sigma_c)}\widetilde{E}(0)+b_c^{8}\\
&\lesssim \int|\varepsilon_y(0)+(Q_{b(0)})_y|^2\theta(\tilde{y})+b_c^{8}\\
&\lesssim \int|(Q_{b(0)})_y|^2\theta(\tilde{y})dy+\|\varepsilon_y(0)\|_{L^2}^2+b_c^8\\
&\lesssim b_c^8,
\end{split}\end{equation}
where we use the assumption on the initial data, i.e. \eqref{HS}. Then \eqref{401} follows from \eqref{404} and \eqref{47}, which completes the proof of Lemma 4.1.
\end{proof}
\begin{remark}
From \eqref{ECL2} and Localized Gagliardo-Nirenberg inequality, we have the following $L^{\infty}$ estimate of $\varepsilon$:
\begin{equation}\label{49}
\|\varepsilon\|_{L^{\infty}(y>\kappa B)}\lesssim \bigg{(}\int|\varepsilon|^{p_0}\bigg{)}^{\frac{1}{p_0+2}}\bigg{(}\int_{y>\kappa B}\varepsilon_y^2\bigg{)}^{\frac{1}{p_0+2}}\leq b_c^{\frac{1261}{630}}\leq b_c^2,
\end{equation}
which is important in the derivation of the second monotonicity formula in the next section.
\end{remark}

\section{The second monotonicity formula}
This section is devoted to derive a second monotonicity tool for $\varepsilon$, which is the key technique to our analysis. It is a Lyapunov functional based on a suitable localised Hamiltonian which is somehow similar to that of \cite{MMR1}.  But here, due to the super-criticality, we cannot estimate the $L^2$ norm of $\varepsilon$ even on the half-line $(1/b_c,+\infty)$. We need to cut it off while this will generate some new terms to be controlled. But these new terms will be controlled by using the monotonicity of the energy introduced in the previous section.
\subsection*{Pointwise monotonicity}Recall from (2.32), the definition of $\varphi$. We let $\psi$, $\eta$ be another 2 smooth functions such that:
\begin{align}
&\psi(y)=\begin{cases}
e^{y} & \text{ for } y<-1,\\
1 & \text{ for } y>-\kappa,
\end{cases}\quad \psi'\geq0,\\
&\eta(y)=\begin{cases}
1 & \text{ for } y<1,\\
0 & \text{ for } y>2,
\end{cases}\quad \eta'\leq0.
\end{align}
Here, we observe that $\psi(-\kappa)=\varphi(-\kappa)+\kappa$, and $\psi(y)=\varphi(y)$ for all $y<-1$, so we may assume in addition:
\begin{equation}\label{APOW}
\varphi(y)\leq \psi(y)\leq (1+3\kappa)\varphi(y), \text{ for all }y\leq-\kappa.
\end{equation}
\begin{remark}
It is easy to check that for every $\frac{1}{2}>\kappa>0$, such $\psi$ and $\varphi$ exist.
\end{remark}
Now, recall $B=b_c^{-\frac{1}{20}}$. We let
$$\psi_B(y)=\psi(\frac{y}{B}),\quad\eta_B(y)=\eta(\frac{y}{B^2}),\quad \zeta_B(y)=\varphi_B\eta_B.$$
 and then define the following Lyapunov functional for $\varepsilon$:
\begin{equation}
\mathcal{F}=\int\bigg{[}\varepsilon_y^2\psi_B+\varepsilon^2\zeta_B-\frac{2}{p+1}\big{(}|\varepsilon+Q_b|^{p+1}-Q_b^{p+1}-(p+1)\varepsilon Q_b^p\big{)}\psi_B\bigg{]}.
\end{equation}
Our main goal here is the following monotonicity formula of $\mathcal{F}$:
\begin{proposition}[The second monotonicity formula]\label{MF}
There exists a universal constant $\mu>0$ such that for all $ s\in[0,s^*)$, the following holds:
\begin{enumerate}
\item Lyapunov control:
\begin{equation}\label{MF1}
\frac{d}{ds}\mathcal{F}+\mu\int\big{(}\varepsilon_y^2+\varepsilon^2\big{)}\varphi'_B\lesssim b_c^{\frac{7}{2}};
\end{equation}
\item Coercivity of $\mathcal{F}$:
\begin{equation}\label{MF2}
\mathcal{N}-b_c^{\frac{7}{2}}\lesssim\mathcal{F}\lesssim \mathcal{N}+b_c^{\frac{7}{2}}.
\end{equation}
\end{enumerate}
\end{proposition}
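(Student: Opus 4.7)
\medskip

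\noindent\textbf{Proof proposal.} The plan is to handle coercivity \eqref{MF2} and the Lyapunov monotonicity \eqref{MF1} separately, with the bulk of the work being the monotonicity.

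For \eqref{MF2}, I would Taylor expand the nonlinear density as
\[
\tfrac{2}{p+1}\bigl(|Q_b+\varepsilon|^{p+1}-Q_b^{p+1}-(p+1)\varepsilon Q_b^p\bigr)=pQ_b^{p-1}\varepsilon^2+O(|\varepsilon|^3+|\varepsilon|^{p+1}),
\]
and bound the cubic remainder by combining the bootstrap assumptions \eqref{BS3}--\eqref{BS5} with Gagliardo--Nirenberg and the $L^\infty$ decay \eqref{CD2}; the contribution should fit comfortably within $b_c^{7/2}$. Since $\psi_B\to 1$ and $\zeta_B\to 1$ on any fixed compact set as $B\to\infty$, and $Q_b=\mathcal{Q}_p+O(b_c)$ on that set by Lemma~\ref{SSP}, the quadratic part of $\mathcal{F}$ reduces to $(L\varepsilon,\varepsilon)+\mathcal{N}/B+O(b_c\mathcal{N})$. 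The orthogonality conditions \eqref{OC} are the standard ones tailored to the kernel $\mathcal{Q}_p'$ (via $(\varepsilon,\Lambda\mathcal{Q}_p)=0$), to the scaling direction $\Lambda\mathcal{Q}_p$ (via $(\varepsilon,\mathcal{Q}_p)=0$ using $L(\Lambda\mathcal{Q}_p)=-2\mathcal{Q}_p$), and to the quadratic translation direction $y\Lambda\mathcal{Q}_p$, so a standard subcoercivity argument (as in \cite{MMR1}) gives $(L\varepsilon,\varepsilon)\gtrsim \|\varepsilon\|_{H^1_{\text{loc}}}^2$. Comparing the resulting lower bound with $\mathcal{N}=B\int(\varepsilon^2+\varepsilon_y^2)\varphi_B'$ via \eqref{APOW} yields \eqref{MF2}.

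For \eqref{MF1}, I would differentiate $\mathcal{F}$ in $s$ and substitute $\varepsilon_s$ from \eqref{ME}. The driving contribution comes from the KdV-type piece $(L\varepsilon)_y$ combined with the weights: integrating by parts against $\psi_B$ and $\zeta_B$ produces the classical virial-type positive quadratic form
\[
\frac{1}{B}\int\bigl(3\varepsilon_y^2+\varepsilon^2\bigr)\varphi_B'+\text{lower order},
\]
whose coefficient is uniform in $B$ (after using $\psi_B'=\varphi_B'/B$ on the exponential part, plus the elementary relation $\zeta_B'=\varphi_B'/B$ on the bulk of its support). The scaling term $b\Lambda\varepsilon$ contributes a comparable virial piece of correct sign thanks to $\psi_B,\zeta_B$ being essentially monotone, and the nonlinear correction $\frac{2}{p+1}(\ldots)\psi_B$ together with the $Q_b^{p-1}\varepsilon$ term in $L\varepsilon$ recombines so that the full quadratic form remains positive modulo $O(b_c)$ perturbations supported on $\{|y|\lesssim 1\}$, absorbed by the linear coercivity.

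The error terms to control are: (i) the modulation terms $(\lambda_s/\lambda+b)(\Lambda Q_b+\Lambda\varepsilon)$, $(x_s/\lambda-1)(Q_b+\varepsilon)_y$, and $-b_sP_b$, which by \eqref{MES1}--\eqref{MES3} and \eqref{25} generate contributions bounded by $(b_c^{5/2}+\mathcal{N}^{1/2})\cdot\mathcal{N}^{1/2}+b_c\cdot\mathcal{N}^{1/2}\lesssim b_c^{7/2}$ under the bootstrap; (ii) the profile error $\Phi_b$, for which \eqref{APP} gives $\|\Phi_b\|_{L^2_{\text{loc}}}\lesssim b_c|\tilde b|+b_c^2 e^{-1/b_c}\lesssim b_c^{5/2+\nu}$, so pairing with $\varepsilon$ weighted by $\psi_B$ yields $\lesssim b_c^{7/2}$; (iii) the remainders $R_b(\varepsilon)$ and $R_{\text{NL}}(\varepsilon)$, handled by \eqref{CD2} and the smallness $\|Q_b-\mathcal{Q}_p\|_{L^\infty}\lesssim b_c$.

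The main obstacle, and the reason the present functional differs from the critical case \cite{MMR1}, is point (iv): the cutoff $\eta_B$ in $\zeta_B=\varphi_B\eta_B$ generates extra terms supported in the annulus $y\sim B^2=b_c^{-1/10}$, typically of the form $\frac{1}{B^2}\int\varepsilon^2\varphi_B\,\eta'(y/B^2)$ and analogous pieces coming from the transport $\varepsilon^2\zeta_B$ under $\Lambda$ and $\partial_y$. Since we have no control on $\int_{y>0}\varepsilon^2$ in the supercritical regime, these must be closed using Lemma~4.1: on the half line $y>\kappa B$, interpolation of $\|\varepsilon\|_{L^\infty(y>\kappa B)}\lesssim b_c^{2}$ from \eqref{49} with $\|\varepsilon\|_{L^{p_0}}\lesssim b_c^{23/50}$ gives an $L^2$ estimate on $[B^2,2B^2]$ with a power far better than $b_c^{7/2}$. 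This is precisely where the energy monotonicity of Section~4 feeds into the second monotonicity, and verifying the sharp numerology there will be the most delicate part of the argument; once it is in place, summing all contributions yields \eqref{MF1}.
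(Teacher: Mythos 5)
Your high-level plan follows the paper's architecture: differentiate $\mathcal{F}$, substitute the $\varepsilon$-equation, extract a coercive virial-type quadratic form, control the profile and modulation errors, and use the energy monotonicity of Section~4 to handle the tail created by the cutoff $\eta_B$. That last point -- closing the $[\kappa B, 2B^2]$ annulus via the localized $L^\infty$ bound from \eqref{49} -- is exactly the paper's mechanism and you identify it correctly. However, there are two concrete gaps in the proposed error estimates.

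First, the claimed bound ``$(b_c^{5/2}+\mathcal{N}^{1/2})\cdot\mathcal{N}^{1/2}+b_c\cdot\mathcal{N}^{1/2}\lesssim b_c^{7/2}$ under the bootstrap'' is numerically false: under \eqref{BS3} one only has $\mathcal{N}\leq b_c^{3+6\nu}$, and for small $\nu$ one gets $\mathcal{N}^{1/2}\cdot\mathcal{N}^{1/2}=\mathcal{N}\leq b_c^{3+6\nu}$, which is \emph{much larger} than $b_c^{7/2}$. This is not a cosmetic issue -- if the error in \eqref{MF1} were $b_c^{3+6\nu}$ instead of $b_c^{7/2}$, Step~2 of Section~6 would not close, since that argument hinges on $\int(\varepsilon_y^2+\varepsilon^2)\varphi_B'\geq b_c^{3+\frac{1}{20}+10\nu}\gg b_c^{7/2}$. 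The paper avoids this by \emph{not} bounding the modulation contributions ($f_{1,2}$, $f_{1,3}$, $f_{1,4}$ in the paper's notation) by a pure power of $b_c$; instead each is estimated by $\frac{\mu_0}{100}\int(\varepsilon_y^2+\varepsilon^2)\varphi_B'+Cb_c^{7/2}$, with the $\mathcal{N}$-sized pieces absorbed into a small multiple of the coercive quantity. Achieving that absorption requires the finer structure: the vanishing $\int\Lambda\mathcal{Q}_p(L\varepsilon)=-2(\varepsilon,\mathcal{Q}_p)=0$, the use of $(\varepsilon,y\Lambda\mathcal{Q}_p)=0$ to kill $\int\Lambda\mathcal{Q}_p\,\varepsilon(1-\zeta_B)$ up to exponentially small errors, the smallness $\Lambda(Q_b-\mathcal{Q}_p)=O(b_c)$, and the exponential decay of $\Lambda\mathcal{Q}_p$ on $\{y<-\kappa B\}$. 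Your proposal does not organize the argument around these cancellations, so as written the modulation errors do not close.

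Second, the quadratic form whose positivity drives \eqref{MF1} is \emph{not} $(L\varepsilon,\varepsilon)$. After integrating by parts in the middle region $|y|<\kappa B$, the leading form is
\[
\int_{|y|<\kappa B}\bigl(3\varepsilon_y^2+\varepsilon^2-p\mathcal{Q}_p^{p-1}\varepsilon^2+p(p-1)\,y\,\mathcal{Q}_p'\mathcal{Q}_p^{p-2}\varepsilon^2\bigr),
\]
with a factor $3$ on the gradient and a nontrivial cross term $p(p-1)y\mathcal{Q}_p'\mathcal{Q}_p^{p-2}\varepsilon^2$. Its coercivity under the orthogonality conditions \eqref{OC} is the content of the localized Virial estimate (the paper's Lemma~5.3, borrowed from \cite{MMR1}), a statement distinct from the coercivity of $L$ (Lemma~A.1 used only for \eqref{MF2}). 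Saying the potential terms are ``absorbed by the linear coercivity'' conflates two different lemmas and glosses over the one that is actually decisive here. Relatedly, your remark that $b\Lambda\varepsilon$ ``contributes a comparable virial piece of correct sign'' does not match the structure of the proof: the pure scaling piece appears as $f_2=2\frac{\lambda_s}{\lambda}\int\Lambda\varepsilon\{\cdots\}$, which the paper treats as an error -- one drops a term of favorable sign, bounds the rest by a small multiple of $\int(\varepsilon_y^2+\varepsilon^2)\varphi_B'$ plus $Cb_c^{7/2}$ -- rather than as a source of additional coercivity.

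Your treatment of \eqref{MF2} (Taylor expand, extract the quadratic form, invoke Lemma~A.1) matches the paper's Step~5 and Appendix~A and is essentially correct.
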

\begin{remark}
The proof of Proposition \ref{MF} is almost parallel to that of Proposition 3.1 in \cite{MMR1}. But since we have a control of the global $L^2$ norm of $\varepsilon$ (consequently the $L^{\infty}$ norm of $\varepsilon$)
, some part of the proof will be easier. 
\end{remark}
\begin{proof}[Proof of Proposition \ref{MF}]We will prove \eqref{MF1} and \eqref{MF2} in several steps:
\subsection*{Step 1 \normalfont{Algebraic computation of $\mathcal{F}$}} A direct computation shows:
\begin{equation*}
\begin{split}
\frac{d}{ds}\mathcal{F}=&2\int\psi_B(\varepsilon_y)_s\varepsilon_y+\varepsilon_s\Big{\{}\varepsilon\zeta_B-\psi_B\big{[}(\varepsilon+Q_b)|\varepsilon+Q_b|^{p-1}-Q_b^{p}\big{]}\Big{\}}\\
&-2\int\psi_B(Q_b)_s\big{[}(\varepsilon+Q_b)|\varepsilon+Q_b|^{p-1}-Q_b^{p}-p\varepsilon Q_b^{p-1}\big{]}\\
=&f_1+f_2+f_3,
\end{split}
\end{equation*}
where
\begin{align*}
&f_1=2\int\bigg{(}\varepsilon_s-\frac{\lambda_s}{\lambda}\Lambda\varepsilon\bigg{)}\Big{\{}-(\psi_B\varepsilon_y)_y+\varepsilon\zeta_B-\psi_B\big{[}(\varepsilon+Q_b)|\varepsilon+Q_b|^{p-1}-Q_b^{p}\big{]}\Big{\}},\\
&f_2=2\frac{\lambda_s}{\lambda}\int\Lambda\varepsilon\Big{\{}-(\psi_B\varepsilon_y)_y+\varepsilon\zeta_B-\psi_B\big{[}(\varepsilon+Q_b)|\varepsilon+Q_b|^{p-1}-Q_b^{p}\big{]}\Big{\}},\\
&f_3=-2\int\psi_B(Q_b)_s\big{[}(\varepsilon+Q_b)|\varepsilon+Q_b|^{p-1}-Q_b^{p}-p\varepsilon Q_b^{p-1}\big{]}.
\end{align*}
We claim that the following estimates hold for some universal constant $\mu_0>0$:
\begin{align}
&f_1\leq -\mu_0\int(\varepsilon_y^2+\varepsilon^2)\varphi_B'+Cb_c^{\frac{7}{2}}\label{51},\\
&f_k\leq \frac{\mu_0}{10}\int(\varepsilon_y^2+\varepsilon^2)\varphi_B'+Cb_c^{\frac{7}{2}}\label{52},\quad \text{for }k=2,3.
\end{align}
It is obvious that \eqref{MF1} follows from \eqref{51} and \eqref{52}.

In step 2 - step 5, we will prove \eqref{51} and \eqref{52}. Observe that the definition of $\varphi$, $\psi$ and $\zeta_B$ imply:
\begin{align}
&\text{for } \forall y\in(-\infty,\kappa],\quad|\varphi'''|+|\varphi''|+|\varphi|+|\psi'''|+|\psi'|+|\psi|\lesssim\varphi'\lesssim\varphi,\\
&\zeta_B'=
\begin{cases}
3\eta_B'&\text{ for }y>B^2,\\
0&\text{ for }B<y\leq B^2,\\
\varphi_B'&\text{ for }y<B.
\end{cases}
\end{align}
We will use these properties several times during the proof.
\subsection*{Step 2 \normalfont{Control of $f_1$}} We give the proof of \eqref{51} by using the equation \eqref{ME} in the following form:
\begin{multline}\label{EQVE}
\varepsilon_s-\frac{\lambda_s}{\lambda}\Lambda \varepsilon=\big{(}-\varepsilon_{yy}+\varepsilon-(\varepsilon+Q_b)|\varepsilon+Q_b|^{p-1}+Q_b^{p}\big{)}_y\\
+\bigg{(}\frac{\lambda_s}{\lambda}+b\bigg{)}\Lambda Q_b+\bigg{(}\frac{x_s}{\lambda}-1\bigg{)}(Q_b+\varepsilon)_y-b_sP_b+\Phi_b,
\end{multline}
where $$\Phi_b=-b\Lambda Q_b-(Q_b''-Q_b+Q_b^{p})', \quad P_b=\frac{\partial Q_b}{\partial b}.$$
Injecting \eqref{EQVE} into the definition of $f_1$ yields:
$$f_1=f_{1,1}+f_{1,2}+f_{1,3}+f_{1,4}+f_{1,5}$$
with
\begin{equation*}\begin{split}
f_{1,1}&=2\int\big{(}-\varepsilon_{yy}+\varepsilon-(\varepsilon+Q_b)|\varepsilon+Q_b|^{p-1}+Q_b^{p}\big{)}_y\Big{\{}-(\psi_B\varepsilon_y)_y\\
&\qquad\quad+\varepsilon\zeta_B-\psi_B\big{[}(\varepsilon+Q_b)|\varepsilon+Q_b|^{p-1}-Q_b^{p}\big{]}\Big{\}},\\
f_{1,2}&=2\bigg{(}\frac{\lambda_s}{\lambda}+b\bigg{)}\int\Lambda Q_b\Big{\{}-\psi_B\big{[}(\varepsilon+Q_b)|\varepsilon+Q_b|^{p-1}-Q_b^{p}\big{]}-(\psi_B\varepsilon_y)_y+\varepsilon\zeta_B\Big{\}},\\
f_{1,3}&=2\bigg{(}\frac{x_s}{\lambda}-1\bigg{)}\int(Q_b+\varepsilon)_y\Big{\{}-\psi_B\big{[}(\varepsilon+Q_b)|\varepsilon+Q_b|^{p-1}-Q_b^{p}\big{]}\\
&\qquad\qquad\qquad\quad-(\psi_B\varepsilon_y)_y+\varepsilon\zeta_B\Big{\}},\\
f_{1,4}&=-2b_s\int P_b\Big{\{}-(\psi_B\varepsilon_y)_y+\varepsilon\zeta_B-\psi_B\big{[}(\varepsilon+Q_b)|\varepsilon+Q_b|^{p-1}-Q_b^{p}\big{]}\Big{\}},\\
f_{1,5}&=2\int\Phi_b\Big{\{}-(\psi_B\varepsilon_y)_y+\varepsilon\zeta_B-\psi_B\big{[}(\varepsilon+Q_b)|\varepsilon+Q_b|^{p-1}-Q_b^{p}\big{]}\Big{\}}.
\end{split}\end{equation*}
\subsubsection*{\underline{Term $f_{1,1}$}\normalfont{:}} Let us integrate by parts to obtain a more manageable formula%
\footnote{See a similar computation in the proof of Proposition 3.1 in \cite{MMR1}.}%
:
\begin{equation*}\begin{split}
 f_{1,1}=&2\int \big{[}-\varepsilon_{yy}+\varepsilon-(\varepsilon+Q_b)|\varepsilon+Q_b|^{p-1}+Q_b^{p}\big{]}_y\big{(}-\psi'_B\varepsilon_y+\varepsilon(\zeta_B-\psi_B)\big{)}\\
 &+2\int\big{[}-\varepsilon_{yy}+\varepsilon-(\varepsilon+Q_b)|\varepsilon+Q_b|^{p-1}+Q_b^{p}\big{]}_y\\
 &\qquad \times\big{[}-\varepsilon_{yy}+\varepsilon-(\varepsilon+Q_b)|\varepsilon+Q_b|^{p-1}+Q_b^{p}\big{]}\psi_B.
\end{split}\end{equation*}
We compute these terms separately. First we integrate by parts to obtain:
\begin{multline*}
2\int[-\varepsilon_{yy}+\varepsilon]_y\big{[}-\psi'_B\varepsilon_y+\varepsilon(\zeta_B-\psi_B)\big{]}
=-2\bigg{\{}\int\psi'_B\varepsilon_{yy}^2\\+\int\varepsilon_y^2\big{(}\frac{3}{2}\zeta'_B-\frac{1}{2}\psi'_B-\frac{1}{2}\psi'''_B\big{)}+\int\varepsilon^2\big{(}\frac{1}{2}(\zeta'_B-\psi'_B)-\frac{1}{2}(\zeta_B-\psi_B)'''\big{)}\bigg{\}}
\end{multline*}
and
\begin{equation*}\begin{split}
&-2\int\big{[}(Q_b+\varepsilon)|Q_b+\varepsilon|^{p-1}-Q_b^{p}\big{]}_y(\zeta_B-\psi_B)\varepsilon\\
=&-2\int(\zeta_B-\psi_B)(Q_b)_y\big{[}(\varepsilon+Q_b)|\varepsilon+Q_b|^{p-1}-Q_b^{p}-p\varepsilon Q_b^{p-1}\big{]}\\
&-\frac{2}{p+1}\int(\zeta_B-\psi_B)'\big{[}|Q_b+\varepsilon|^{p+1}-Q_b^{p+1}-(p+1)\varepsilon Q_b^{p}\big{]}\\
&+2\int(\zeta_B-\psi_B)'\big{[}(\varepsilon+Q_b)|\varepsilon+Q_b|^{p-1}-Q_b^{p}\big{]}\varepsilon.
\end{split}\end{equation*}
Next by direct expansion:
\begin{multline*}\int\big{[}(\varepsilon+Q_b)|\varepsilon+Q_b|^{p-1}-Q_b^{p}\big{]}_y\psi'_B\varepsilon_y=\\
p\int\psi'_B\varepsilon_y\Big{\{}(Q_b)_y\big{[}|Q_b+\varepsilon|^{p-1}-Q_b^{p-1}\big{]}+|Q_b+\varepsilon|^{p-1}\varepsilon_y\Big{\}}.
\end{multline*}
Finally,
\begin{equation*}\begin{split}
&2\int\big{[}-\varepsilon_{yy}+\varepsilon-(\varepsilon+Q_b)|\varepsilon+Q_b|^{p-1}+Q_b^{p}\big{]}_y\\
&\quad \times\big{[}-\varepsilon_{yy}+\varepsilon-(\varepsilon+Q_b)|\varepsilon+Q_b|^{p-1}+Q_b^{p}\big{]}\psi_B\\
=&-\int \psi'_B\big{[}-\varepsilon_{yy}+\varepsilon-(\varepsilon+Q_b)|\varepsilon+Q_b|^{p-1}+Q_b^{p}\big{]}^2\\
=&-\int \psi'_B\Big{\{}\big{[}-\varepsilon_{yy}+\varepsilon-(\varepsilon+Q_b)|\varepsilon+Q_b|^{p-1}+Q_b^{p}\big{]}^2-[-\varepsilon_{yy}+\varepsilon]^2\Big{\}}\\
&-\int\psi'_B[-\varepsilon_{yy}+\varepsilon]^2\\
=&-\int \psi'_B\Big{\{}\big{[}-\varepsilon_{yy}+\varepsilon-(\varepsilon+Q_b)|\varepsilon+Q_b|^{p-1}+Q_b^{p}\big{]}^2-[-\varepsilon_{yy}+\varepsilon]^2\Big{\}}\\
&-\bigg{[}\int\psi'_B(\varepsilon_{yy}^2+2\varepsilon_y^2)+\int\varepsilon^2(\psi'_B-\psi'''_B)\bigg{]}.
\end{split}\end{equation*}

We collect all the above computations and obtain the following:
\begin{equation*}\begin{split}
\quad f_{1,1}=&-\int\big{[}3\psi'_B\varepsilon_{yy}^2+(3\zeta'_B+\psi'_B-\psi'''_B)\varepsilon_y^2+(\zeta'_B-\zeta'''_B)\varepsilon^2\big{]}\\
&-2\int\bigg{[}\frac{|\varepsilon+Q_b|^{p+1}-Q_b^{p+1}}{p+1}-\varepsilon Q_b^p-\varepsilon\big{(}(\varepsilon+Q_b)|\varepsilon+Q_b|^{p-1}-Q_b^p\big{)}\bigg{]}(\zeta'_B-\psi'_B)\\
&+2\int\big{[}(\varepsilon+Q_b)|\varepsilon+Q_b|^{p-1}-Q_b^{p}-p\varepsilon Q_b^{p-1}\big{]}(Q_b)_y(\psi_B-\zeta_B)\\
&+2p\int\psi'_B\varepsilon_y\big{\{}(Q_b)_y[|Q_b+\varepsilon|^{p-1}-Q_b^{p-1}]+|Q_b+\varepsilon|^{p-1}\varepsilon_y\big{\}}\\
&-\int\psi'_B\Big{\{}\big{[}-\varepsilon_{yy}+\varepsilon-\big{(}(\varepsilon+Q_b)|\varepsilon+Q_b|^{p-1}-Q_b^{p}\big{)}\big{]}^2-[-\varepsilon_{yy}+\varepsilon]^2\Big{\}}\\
=&(f_{1,1})^{<}+(f_{1,1})^{\sim}+(f_{1,1})^{>},
\end{split}\end{equation*}
where $(f_{1,1})^{<,\sim,>}$ correspond to the integration on $y<-\kappa B$, $|y|<\kappa B$ and $y>\kappa B$, respectively.

In the region $y>\kappa B$, we have $\psi'_B=\psi'''_B\equiv0$. From \eqref{ECL2}, \eqref{402} and \eqref{49}, we have:
\begin{equation*}\begin{split}
&\quad\;\Bigg{|}\int_{y>\kappa B}\big{[}3\psi'_B\varepsilon_{yy}^2+(3\zeta'_B+\psi'_B-\psi'''_B)\varepsilon_y^2+(\zeta'_B-\zeta'''_B)\varepsilon^2\big{]}\Bigg{|}\\
&\lesssim \int_{y>\kappa B}\varepsilon_y^2+\frac{1}{B}\int_{\kappa B<y<2B^2}\varepsilon^2\lesssim b_c^4+B\|\varepsilon\|_{L^{\infty}(y>\kappa B)}^2\\
&\lesssim Bb_c^4+b_c^4\leq b_c^{\frac{7}{2}}.
\end{split}\end{equation*}
Together with
\begin{equation*}\begin{split}
&\quad\;\Bigg{|}\int_{y>\kappa B}\bigg{[}\frac{|\varepsilon+Q_b|^{p+1}-Q_b^{p+1}}{p+1}-\varepsilon Q_b^p-\varepsilon\big{(}(\varepsilon+Q_b)|\varepsilon+Q_b|^{p-1}-Q_b^{p}\big{)}\bigg{]}(\zeta'_B-\psi'_B)\Bigg{|}\\
&\lesssim\int_{y>\kappa B}|\varepsilon|^{p+1}+|Q_b|^{p-1}\varepsilon^2\lesssim \|\varepsilon\|_{L^{p_0}}^{\frac{p_0(p+3)}{p_0+2}}\|\varepsilon_y\|_{L^2(y>\kappa B)}^{\frac{2(p+1-p_0)}{p_0+2}}+e^{-\frac{\kappa B}{20}}\|\varepsilon\|_{L^{\infty}}^2\\&\lesssim b_c^{\frac{7}{2}}
\end{split}\end{equation*}
and
\begin{equation*}\begin{split}
&\quad\Bigg{|}\int_{y>\kappa B}\big{[}(\varepsilon+Q_b)|\varepsilon+Q_b|^{p-1}-Q_b^{p}-p\varepsilon Q_b^{p-1}\big{]}(Q_b)_y(\psi_B-\zeta_B)\Bigg{|}\\
&\lesssim e^{-\frac{\kappa B}{20}}\bigg{(}\int_{y>\kappa B}\big{(}\varepsilon^2 e^{-\frac{|y|}{2}}+|\varepsilon|^p\big{)}\bigg{)}\lesssim b_c^{\frac{7}{2}},
\end{split}\end{equation*}
we obtain:
\begin{equation}\label{53}
(f_{1,1})^{>}\lesssim b_c^{\frac{7}{2}}.
\end{equation}
In the region $|y|<\kappa B$, $\zeta_B(y)=\varphi_B(y)=1+y/B$ and $\psi_B(y)=1$. In particular, $\zeta'''_B=\psi'_B=0$. We obtain:
\begin{equation*}\begin{split}
(f_{1,1})^{\sim}=&-\frac{1}{B}\int_{|y|<\kappa B}\bigg{\{}3\varepsilon_y^3+\varepsilon^2+2\bigg{[}\frac{|\varepsilon+Q_b|^{p+1}-Q_b^{p+1}}{p+1}\\
&\quad\quad\qquad\quad\quad-\varepsilon Q_b^p-\varepsilon\big{(}(\varepsilon+Q_b)|\varepsilon+Q_b|^{p-1}-Q_b^{p}\big{)}\bigg{]}\\
&\qquad\qquad\qquad+2\big{[}(\varepsilon+Q_b)|\varepsilon+Q_b|^{p-1}-Q_b^{p}-p\varepsilon Q_b^{p-1}\big{]}y(Q_b)_y\bigg{\}}\\
=&-\frac{1}{B}\int_{|y|<\kappa B}\Big{\{}3\varepsilon^2_y+\varepsilon^2-p\mathcal{Q}_p^{p-1}\varepsilon^2+p(p-1)y\mathcal{Q}_p'\mathcal{Q}_p^{p-2}\varepsilon^2\Big{\}}+R(\varepsilon),
\end{split}\end{equation*}
where
\begin{equation*}\begin{split}
R(\varepsilon)=-\frac{1}{B}&\int_{|y|<\kappa B}\bigg{\{}-p(Q_b^{p-1}-\mathcal{Q}_p^{p-1})\varepsilon^2+p(p-1)y\big{(}(Q_b)_yQ_b^{p-2}-\mathcal{Q}_p'\mathcal{Q}_p^{p-2}\big{)}\varepsilon^2\\
&+2\bigg{(}\frac{|\varepsilon+Q_b|^{p+1}-Q_b^{p+1}}{p+1}-\varepsilon Q_b^p-\frac{p}{2}Q_b^{p-1}\varepsilon^2\bigg{)}\\
&-2\varepsilon\Big{(}(\varepsilon+Q_b)|\varepsilon+Q_b|^{p-1}-Q_b^{p}-p\varepsilon Q_b^{p-1}\Big{)}\\
&+2\Big{[}(\varepsilon+Q_b)|\varepsilon+Q_b|^{p-1}-Q_b^{p}-p\varepsilon Q_b^{p-1}-\frac{p(p-1)}{2}\varepsilon^2Q_b^{p-2}\Big{]}y(Q_b)_y\bigg{\}}.
\end{split}\end{equation*}
We claim the following localized Virial estimate to obtain a coercivity result:
\begin{lemma}[Localized Virial estimate%
\footnote{See proof in \cite{MMR1} (Lemma 3.4 \& Lemma A.2).}%
] There exists $B_0>100$ and $\mu_1>0$ such that if $B>B_0$,  then:
\begin{multline*}
\int_{|y|<\kappa B}\big{(}3\varepsilon_y^2+\varepsilon^2-p\mathcal{Q}_p^{p-1}\varepsilon^2+p(p-1)y\mathcal{Q}_p'\mathcal{Q}_p^{p-2}\varepsilon^2\big{)}\\
\geq \mu_1\int_{|y|<\kappa B}(\varepsilon_y^2+\varepsilon^2)-\frac{1}{B}\int \varepsilon^2e^{-\frac{|y|}{2}}.
\end{multline*}
\end{lemma}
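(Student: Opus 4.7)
The plan is to reduce this localized Virial coercivity to its unlocalized counterpart. First I would establish that the unlocalized quadratic form
\[
H(\varepsilon):=\int\bigl(3\varepsilon_y^2+\varepsilon^2-p\mathcal{Q}_p^{p-1}\varepsilon^2+p(p-1)y\mathcal{Q}_p'\mathcal{Q}_p^{p-2}\varepsilon^2\bigr)\,dy
\]
satisfies a coercivity bound
\[
H(\varepsilon)\ \geq\ \mu_2\int(\varepsilon_y^2+\varepsilon^2 e^{-|y|})\,dy
\]
on the codimension-three subspace cut out by the orthogonality conditions \eqref{OC}, with a constant $\mu_2>0$ uniform in $p\in[5,p^*)$. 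The localized statement then follows by inserting a suitable cutoff.

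At $p=5$ this is the classical Virial coercivity of Martel--Merle--Rapha\"el (Lemma 3.4 and Lemma A.2 in \cite{MMR1}), whose proof relies on a careful spectral analysis around the critical ground state $\mathcal{Q}$: the single negative eigendirection of $L$ and its one-dimensional kernel $\mathrm{span}(\mathcal{Q}')$ are controlled by the three orthogonality directions $(\mathcal{Q},\Lambda\mathcal{Q},y\Lambda\mathcal{Q})$. For $5<p<p^*$, the coefficients $p\mathcal{Q}_p^{p-1}$ and $p(p-1)y\mathcal{Q}_p'\mathcal{Q}_p^{p-2}$, as well as the orthogonality directions $\mathcal{Q}_p,\Lambda\mathcal{Q}_p,y\Lambda\mathcal{Q}_p$, depend smoothly on $p$ in every exponentially weighted norm, since $\mathcal{Q}_p$ converges to $\mathcal{Q}$ smoothly with uniform exponential decay. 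Analytic perturbation theory preserves the spectral picture of $L$ for $p$ close to $5$, and a Gram--matrix continuity argument shows that \eqref{OC} continues to eliminate the unstable and neutral directions. This transfers the coercivity from $p=5$ to $p\in[5,p^*)$ with a constant $\mu_2$ slightly smaller but uniform.

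To pass to the localized version I would introduce a smooth cutoff $\Psi_B$ equal to $1$ on $|y|\le\kappa B$ and supported in $|y|\le 2\kappa B$, apply the global coercivity to $\tilde\varepsilon:=\sqrt{\Psi_B}\,\varepsilon$ after subtracting its projection onto $\mathrm{span}\{\mathcal{Q}_p,\Lambda\mathcal{Q}_p,y\Lambda\mathcal{Q}_p\}$, and then undo the cutoff. Because $\mathcal{Q}_p$ and $\Lambda\mathcal{Q}_p$ decay exponentially, the projection coefficients are $O(e^{-\kappa B/2}\|\varepsilon\|_{L^2_{\mathrm{loc}}})$, and their contribution to $H$ is dominated by the error term $\tfrac{1}{B}\int\varepsilon^2 e^{-|y|/2}$. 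The commutators between $\partial_y^2$ and $\sqrt{\Psi_B}$ produce cross-terms supported in $\kappa B<|y|<2\kappa B$ of size $B^{-2}\int(\varepsilon_y^2+\varepsilon^2)$, which are absorbed into the left-hand side once $B\ge B_0$ is large enough.

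The main obstacle is ensuring that $\mu_2$ does not degenerate as $p\to 5^+$: the orthogonality subspace itself varies with $p$, and one must verify that the Gram matrix of $(\mathcal{Q}_p,\Lambda\mathcal{Q}_p,y\Lambda\mathcal{Q}_p)$ against a fixed complementary triple (e.g.\ adapted to the negative and null directions of $L$) remains uniformly invertible. This follows from the explicit nonvanishing values of these pairings at $p=5$ together with smooth dependence on $p$. Once this perturbation step is secured, the remainder is a technical but standard exercise in Virial localization, following the scheme of \cite{MMR1}.
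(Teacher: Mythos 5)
The paper does not prove this lemma: the footnote simply refers to Lemma 3.4 and Lemma A.2 of \cite{MMR1}, which are stated and proved there at $p=5$ only. So there is no ``paper's own proof'' to compare against; what you have done is supply a reconstruction where the author chose to cite. Your high-level plan (unlocalized Virial coercivity on the orthogonality codimension-three subspace, transferred from $p=5$ by smooth dependence of $\mathcal{Q}_p$ and of the orthogonality directions on $p$, followed by localization with a cutoff) is exactly the strategy the citation implies, and you correctly identify the one genuinely new ingredient needed beyond \cite{MMR1}: the uniform-in-$p$ perturbation argument, handled via a Gram--matrix continuity argument and the stability of the spectral picture of $L$. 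That part is sound.

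Two points to tighten. First, your global coercivity is stated with the weight $e^{-|y|}$ on $\varepsilon^2$, but the target inequality has the unweighted $\int_{|y|<\kappa B}\varepsilon^2$ on its right side. This is not actually a problem once one notices that the potential $p\mathcal{Q}_p^{p-1}-p(p-1)y\mathcal{Q}_p'\mathcal{Q}_p^{p-2}$ decays exponentially, so that away from a fixed $O(1)$ neighborhood of the origin the localized quadratic form is trivially bounded below by a multiple of $\varepsilon_y^2+\varepsilon^2$; the spectral input is only needed near the origin. But you should make this reduction explicit rather than expect the weighted global coercivity to localize directly. Second, and more to the point, your claim that commutator cross-terms of size $B^{-2}\int_{\kappa B<|y|<2\kappa B}(\varepsilon_y^2+\varepsilon^2)$ ``are absorbed into the left-hand side'' is not literally correct: the left-hand side is an integral over $|y|<\kappa B$ and cannot absorb anything supported strictly outside that region. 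The resolution is the same as the previous point: place the cutoff at a fixed $O(1)$ scale $R$ adapted to the exponential decay of $\mathcal{Q}_p$, so that the commutator region $R<|y|<2R$ lies where the form is already coercive, and the projection coefficients are bounded by $e^{-cR}$-weighted local $L^2$ norms that can be absorbed into the $\frac1B\int\varepsilon^2 e^{-|y|/2}$ error. This is the mechanism the paper itself uses in Appendix~A when proving the parallel coercivity estimate \eqref{COF} with the cutoff $\Psi_B=\psi_B\,\eta_0(y/B)$; you would want to imitate that computation rather than cut off sharply at $\kappa B$.
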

Since $\int_{|y|>\kappa B}\varepsilon^2e^{-\frac{|y|}{2}}\lesssim b_c^{10}$,
we have for some $\mu_2>0$:
$$\int_{|y|<\kappa B}\big{(}3\varepsilon_y^2+\varepsilon^2-p\mathcal{Q}_p^{p-1}\varepsilon^2+p(p-1)y\mathcal{Q}_p'\mathcal{Q}_p^{p-2}\varepsilon^2\big{)}\geq \mu_2\int_{|y|<\kappa B}(\varepsilon_y^2+\varepsilon^2)-b_c^{10}.$$
Using a similar strategy we have:
\begin{equation*}\begin{split}
|R(\varepsilon)|&\lesssim \frac{1}{B}\bigg{(}b_c\int_{|y|<\kappa B}\varepsilon^2+\int_{|y|<\kappa B}|\varepsilon|^3+|\varepsilon|^{p+1}\bigg{)}\\
&\lesssim \frac{1}{B}\bigg{(}(b_c+\|\varepsilon\|_{L^{\infty}})\int_{|y|<\kappa B}(\varepsilon_y^2+\varepsilon^2)\bigg{)}\\
&\lesssim \frac{1}{1000}\int_{|y|<\kappa B}(\varepsilon_y^2+\varepsilon^2)\varphi'_B.
\end{split}\end{equation*}
Collecting the above estimates, we obtain for some $\mu_3>0$:
\begin{equation}\label{54}
(f_{1,1})^{\sim}\leq -\mu_3\int_{|y|<\kappa B}(\varepsilon_y^2+\varepsilon^2)\varphi'_B+Cb_c^{\frac{7}{2}}.
\end{equation}
For the region $y<-\kappa B$, we have $\zeta_B(y)=\varphi_B(y)$ and $\psi_B\sim\varphi_B$. Hence, we immediately have:
\begin{align*}
&\int_{y<-\kappa B}\varepsilon^2|\zeta'''_B|\lesssim \frac{1}{B^2}\int_{y<-\kappa B}\varepsilon^2\varphi'_B\leq \frac{1}{100}\int_{y<-\kappa B}\varepsilon^2\varphi'_B,\\
&\int_{y<-\kappa B}\varepsilon_y^2|\psi'''_B|\lesssim \frac{1}{B^2}\int_{y<-\kappa B}\varepsilon_y^2\varphi'_B\leq \frac{1}{100}\int_{y<-\kappa B}\varepsilon_y^2\varphi'_B.
\end{align*}
From Lemma \ref{SSP}, we know that for $y<-\kappa B$, $|Q_b(y)|\lesssim b_c$ and $|Q_b'(y)|\lesssim b_c^2$. Recall that we have $\|\varepsilon\|_{L^{\infty}}\leq b_c^{\frac{1}{2}}$, then we can estimate:
\begin{equation*}\begin{split}
&\;\;\Bigg{|}\int_{y<-\kappa B}\bigg{[}\frac{|\varepsilon+Q_b|^{p+1}-Q_b^{p+1}}{p+1}-\varepsilon Q_b^p-\varepsilon\big{(}(\varepsilon+Q_b)|\varepsilon+Q_b|^{p-1}-Q_b^{p}\big{)}\bigg{]}(\zeta'_B-\psi'_B)\Bigg{|}\\
&\lesssim \int_{y<-\kappa B}\bigg{(}|\varepsilon|^{p+1}+|Q_b|^{p-1}\varepsilon^2\bigg{)}\varphi'_B\lesssim(b_c^{p-1}+\|\varepsilon\|^{p-1}_{L^{\infty}})\int_{y<-\kappa B}\varepsilon^2\varphi'_B\\
&\lesssim \frac{1}{100}\int_{y<-\kappa B}\varepsilon^2\varphi'_B,
\end{split}
\end{equation*}
\begin{equation*}\begin{split}
&\quad\Bigg{|}\int_{y<-\kappa B}\big{[}(\varepsilon+Q_b)|\varepsilon+Q_b|^{p-1}-Q_b^{p}-p\varepsilon Q_b^{p-1}\big{]}(Q_b)_y(\psi_B-\zeta_B)\Bigg{|}\\
&\lesssim B\int_{y<-\kappa B}(\varepsilon^2+|\varepsilon|^p)|(Q_b)_y|\varphi'_B\lesssim Bb_c^2(1+\|\varepsilon\|_{L^{\infty}}^{p-2})\int_{y<-\kappa B}\varepsilon^2\varphi'_B\\
&\lesssim\frac{1}{100}\int_{y<-\kappa B}\varepsilon^2\varphi'_B.
\end{split}\end{equation*}
Similarly, we have:
\begin{equation*}\begin{split}
&\quad\Bigg{|}\int_{y<-\kappa B}\psi'_B\varepsilon_y\big{\{}(Q_b)_y[|Q_b+\varepsilon|^{p-1}-Q_b^{p-1}]+|Q_b+\varepsilon|^{p-1}\varepsilon_y\big{\}}\Bigg{|}\\
&\lesssim \int_{y<-\kappa B}\Big{(}|\varepsilon_y\varepsilon(Q_b)_yQ_b^{p-2}|+|\varepsilon_y(Q_b)_y||\varepsilon|^{p-1}+|\varepsilon_y^2Q_b^{p-1}|+\varepsilon_y^2|\varepsilon|^{p-1}\Big{)}\varphi'_B\\
&\lesssim(b_c^{p-1}+\|\varepsilon\|^{p-1}_{L^{\infty}})\int_{y<-\kappa B}(\varepsilon_y^2+\varepsilon^2)\varphi'_B\\
&\lesssim\frac{1}{100}\int_{y<-\kappa B}(\varepsilon_y^2+\varepsilon^2)\varphi'_B
\end{split}\end{equation*}
and
\begin{equation*}\begin{split}
&\quad\Bigg{|}\int_{y<-\kappa B}\psi'_B\Big{\{}\big{[}-\varepsilon_{yy}+\varepsilon-\big{(}(\varepsilon+Q_b)|\varepsilon+Q_b|^{p-1}-Q_b^{p}\big{)}\big{]}^2-[-\varepsilon_{yy}+\varepsilon]^2\Big{\}}\Bigg{|}\\
&\lesssim \int_{y<-\kappa B}\Big{(}|\varepsilon\varepsilon_{yy }Q_b^{p-1}|+|\varepsilon_{yy}Q_b^{p}|+|\varepsilon^2 Q_b^{p-1}|+|\varepsilon Q_b^{p}|+|\varepsilon|^{2p}+\big{|}\varepsilon_{yy}|\varepsilon|^p\big{|}\Big{)}\psi'_B\\
&\lesssim \big{(}b_c^{p-1}+\|\varepsilon\|_{L^{\infty}}^{p-1}\big{)}\int_{y<-\kappa B}(\varepsilon_{yy}^2+\varepsilon^2)\psi'_B+\frac{1}{100}\int_{y<-\kappa B}(\varepsilon_{yy}^2+\varepsilon^2)\psi'_B\\
&\quad+100\int_{y<-\kappa B}Q_b^{2p}+\|\varepsilon\|_{L^{\infty}}^{2p-2}\int_{y<-\kappa B}\varepsilon^2\psi'_B\\
&\lesssim\frac{1}{100}\int_{y<-\kappa B}\big{(}\varepsilon_{yy}^2\psi'_B+\varepsilon^2\varphi'_B\big{)}+b_c^{\frac{7}{2}}.
\end{split}\end{equation*}
Therefore we obtain:
\begin{equation}\label{55}
(f_{1,1})^{<}\leq -\mu_4\int_{y<-\kappa B}(\varepsilon_y^2+\varepsilon^2)\varphi'_B+Cb_c^{\frac{7}{2}}
\end{equation}
for some $\mu_4>0$. From \eqref{53}, \eqref{54}, \eqref{55} and the following estimate:
\begin{equation*}\begin{split}
\int_{y>\kappa B}(\varepsilon_y^2+\varepsilon^2)\varphi'_B\lesssim& \frac{1}{B}\int_{y>\kappa B}\varepsilon_y^2+\frac{1}{B}\int_{\kappa B<y<2B^2}\varepsilon^2\\
\lesssim& b_c^4+B\|\varepsilon\|_{L^{\infty}(y>\kappa B)}^2\lesssim b_c^{\frac{7}{2}},
\end{split}\end{equation*}
we obtain for some $\mu_0>0$,
\begin{equation}\label{56}
f_{1,1}\leq -\mu_0\int(\varepsilon_y^2+\varepsilon^2)\varphi'_B+Cb_c^{\frac{7}{2}}.
\end{equation}
\subsubsection*{\underline{Term $f_{1,2}$}\normalfont{:}}We first rewrite $f_{1,2}$:
\begin{multline*}
f_{1,2}=2\bigg{(}\frac{\lambda_s}{\lambda}+b\bigg{)}\int(\Lambda(Q_b-\mathcal{Q}_p))\Big{\{}-(\psi_B\varepsilon_y)_y+\varepsilon\zeta_B\\
-\psi_B\big{[}(\varepsilon+Q_b)|\varepsilon+Q_b|^{p-1}-Q_b^{p}\big{]}\Big{\}}+\tilde{f}_{1,2},
\end{multline*}
where
\begin{align*}
\tilde{f}_{1,2}&=2\bigg{(}\frac{\lambda_s}{\lambda}+b\bigg{)}\int\Lambda\mathcal{Q}_p\Big{\{}-(\psi_B\varepsilon_y)_y+\varepsilon\zeta_B
-\psi_B\big{[}(\varepsilon+Q_b)|\varepsilon+Q_b|^{p-1}-Q_b^{p}\big{]}\Big{\}}\\
&=2\bigg{(}\frac{\lambda_s}{\lambda}+b\bigg{)}\int\Lambda \mathcal{Q}_p\big{[}-(\psi_B)_y\varepsilon_y+(1-\psi_B)\varepsilon_{yy}\big{]}\\
&\quad+2\bigg{(}\frac{\lambda_s}{\lambda}+b\bigg{)}\int\Lambda \mathcal{Q}_p\Big{\{}(1-\psi_B)\big{[}(\varepsilon+Q_b)|\varepsilon+Q_b|^{p-1}-Q_b^{p}\big{]}\Big{\}}\\
&\quad-2\bigg{(}\frac{\lambda_s}{\lambda}+b\bigg{)}\int\Lambda \mathcal{Q}_p\big{[}(\varepsilon+Q_b)|\varepsilon+Q_b|^{p-1}-Q_b^{p}-p\varepsilon \mathcal{Q}_p^{p-1}\big{]}+\tilde{\tilde{f}}_{1,2},\\
\tilde{\tilde{f}}_{1,2}&=2\bigg{(}\frac{\lambda_s}{\lambda}+b\bigg{)}\int\Lambda\mathcal{Q}_p\big{(}-\varepsilon_{yy}+\varepsilon\zeta_B-p\mathcal{Q}_p^{p-1}\varepsilon\big{)}\\
&=2\bigg{(}\frac{\lambda_s}{\lambda}+b\bigg{)}\int\Lambda \mathcal{Q}_p(L\varepsilon)-2\bigg{(}\frac{\lambda_s}{\lambda}+b\bigg{)}\int\varepsilon(1-\zeta_B)\Lambda \mathcal{Q}_p.
\end{align*}
In conclusion, we have:
\begin{equation*}\begin{split}
f_{1,2}&=2\bigg{(}\frac{\lambda_s}{\lambda}+b\bigg{)}\int\Lambda \mathcal{Q}_p(L\varepsilon)-2\bigg{(}\frac{\lambda_s}{\lambda}+b\bigg{)}\int\varepsilon(1-\zeta_B)\Lambda \mathcal{Q}_p\\
&+2\bigg{(}\frac{\lambda_s}{\lambda}+b\bigg{)}\int(\Lambda(Q_b-\mathcal{Q}_p))\Big{\{}-(\psi_B\varepsilon_y)_y+\varepsilon\zeta_B\\
&\qquad\qquad\qquad\quad-\psi_B\big{[}(\varepsilon+Q_b)|\varepsilon+Q_b|^{p-1}-Q_b^{p}\big{]}\Big{\}}\\
&+2\bigg{(}\frac{\lambda_s}{\lambda}+b\bigg{)}\int\Lambda \mathcal{Q}_p\big{[}-(\psi_B)_y\varepsilon_y+(1-\psi_B)\varepsilon_{yy}\big{]}\\
&+2\bigg{(}\frac{\lambda_s}{\lambda}+b\bigg{)}\int\Lambda \mathcal{Q}_p\Big{\{}(1-\psi_B)\big{[}(\varepsilon+Q_b)|\varepsilon+Q_b|^{p-1}-Q_b^{p}\big{]}\Big{\}}\\
&-2\bigg{(}\frac{\lambda_s}{\lambda}+b\bigg{)}\int\Lambda \mathcal{Q}_p\big{[}(\varepsilon+Q_b)|\varepsilon+Q_b|^{p-1}-Q_b^{p}-p\varepsilon \mathcal{Q}_p^{p-1}\big{]}.
\end{split}\end{equation*}

We know from the orthogonality condition \eqref{OC} that:
$$\int\Lambda \mathcal{Q}_p(L\varepsilon)=(\varepsilon,L\Lambda \mathcal{Q}_p)=-2(\varepsilon, \mathcal{Q}_p)=0.$$
Again from the orthogonality condition $(\varepsilon,y\Lambda \mathcal{Q}_p)=0$, we can estimate:
\begin{align*}
\Bigg{|}\int\Lambda \mathcal{Q}_p\varepsilon(1-\zeta_B)\Bigg{|}&=\Bigg{|}\int\Lambda \mathcal{Q}_p \varepsilon\bigg{(}1-\zeta_B+\frac{y}{B}\bigg{)})\Bigg{|}\\
&\lesssim e^{-\frac{\kappa B}{20}}\|\varepsilon\|_{L^{\infty}}\leq b_c^{\frac{7}{2}}.
\end{align*}

For the next term, we first integrate by parts to remove all the derivatives on $\varepsilon$, then we divide the integral into 2 parts, $\int_{y<\kappa B}$ and $\int_{y>\kappa B}$. For the first part we use Cauchy-Schwarz inequality, \eqref{ASB} and \eqref{ASB2}. While for the second part we use the fact that $Q_b$ decays exponentially on the right. So we have:
\begin{equation*}\begin{split}
&\quad\;\Bigg{|}\bigg{(}\frac{\lambda_s}{\lambda}+b\bigg{)}\int\Lambda(Q_b-\mathcal{Q}_p)\big{\{}-\psi_B\big{[}(\varepsilon+Q_b)|\varepsilon+Q_b|^{p-1}-Q_b^{p}\big{]}-(\psi_B\varepsilon_y)_y+\varepsilon\zeta_B\big{\}}\Bigg{|}\\
&=\Bigg{|}\bigg{(}\frac{\lambda_s}{\lambda}+b\bigg{)}\int\bigg{(}(\Lambda Q_b-\Lambda \mathcal{Q}_p)\big{\{}-\psi_B\big{[}(\varepsilon+Q_b)|\varepsilon+Q_b|^{p-1}-Q_b^{p}\big{]}+\varepsilon\zeta_B\big{\}}\\
&\qquad\qquad\qquad\quad-\Big{[}(\Lambda Q_b-\Lambda \mathcal{Q}_p)'\psi_B\Big{]}'\varepsilon\bigg{)}\Bigg{|}\\
&\lesssim\bigg{(}b_c^{\frac{5}{2}}+\mathcal{N}^{\frac{1}{2}}\bigg{)}\bigg{(}b_c\int_{y<\kappa B}\psi_B(|\varepsilon|+|\varepsilon|^p)+\int_{y>\kappa B}e^{-\frac{|y|}{10}}(|\varepsilon|+|\varepsilon|^p)\bigg{)}\\
&\lesssim \bigg{(}b_c^{\frac{5}{2}}+\mathcal{N}^{\frac{1}{2}}\bigg{)}\bigg{(}b_c\bigg{(}\int_{y<\kappa B}\varepsilon^2\psi_B\bigg{)}^{\frac{1}{2}}\bigg{(}\int_{y<\kappa B}\psi_B\bigg{)}^{\frac{1}{2}}+e^{-\frac{\kappa B}{20}}\|\varepsilon\|_{L^{\infty}}\bigg{)}\\
&\lesssim b_cB^{\frac{3}{2}}\int(\varepsilon_y^2+\varepsilon^2)\varphi'_B+b_c^{\frac{7}{2}}B\bigg{(}\int(\varepsilon_y^2+\varepsilon^2)\varphi'_B\bigg{)}^{\frac{1}{2}}+b_c^{\frac{7}{2}}\\
&\leq \frac{\mu_0}{1000}\int(\varepsilon_y^2+\varepsilon^2)\varphi'_B+Cb_c^{\frac{7}{2}}.
\end{split}\end{equation*}

For the following 2 terms, we first integrate by parts again to remove the derivatives on $\varepsilon$.  Then we use the fact that $\psi_B=1$ on $[-\kappa B,+\infty)$ and $$|(\Lambda \mathcal{Q}_p)''(y)|+|\Lambda \mathcal{Q}_p(y)|\lesssim e^{-\frac{\kappa B}{20}}\varphi'_B(y)$$ for $y<-\kappa B$, to obtain:
\begin{equation*}\begin{split}
&\quad\;\Bigg{|}\bigg{(}\frac{\lambda_s}{\lambda}+b\bigg{)}\int\Lambda \mathcal{Q}_p\big{[}-(\psi_B)_y\varepsilon_y+(1-\psi_B)\varepsilon_{yy}\big{]}\Bigg{|}\\
&=\Bigg{|}\bigg{(}\frac{\lambda_s}{\lambda}+b\bigg{)}\int\Big{\{}\big{[}\Lambda \mathcal{Q}_p(1-\psi_B)\big{]}''\varepsilon+\big{[}\Lambda \mathcal{Q}_p(\psi_B)_y\big{]}'\varepsilon\Big{\}}\Bigg{|}\\
&\lesssim\bigg{(}b_c^{\frac{5}{2}}+\mathcal{N}^{\frac{1}{2}}\bigg{)}\bigg{(}\int_{y<-\kappa B}\varepsilon^2\varphi'_B\bigg{)}^{\frac{1}{2}}e^{-\frac{\kappa B}{30}}\\
&\leq\frac{\mu_0}{1000}\int(\varepsilon_y^2+\varepsilon^2)\varphi'_B+Cb_c^{\frac{7}{2}}
\end{split}\end{equation*}
and
\begin{align*}
&\quad\;\Bigg{|}\bigg{(}\frac{\lambda_s}{\lambda}+b\bigg{)}\int\Lambda \mathcal{Q}_p\Big{\{}(1-\psi_B)\big{[}(\varepsilon+Q_b)|\varepsilon+Q_b|^{p-1}-Q_b^{p}\big{]}\Big{\}}\Bigg{|}\\
&\lesssim\bigg{(}b_c^{\frac{5}{2}}+\mathcal{N}^{\frac{1}{2}}\bigg{)}\int_{y<-\kappa B}\big{(}|\varepsilon|+|\varepsilon|^{p}\big{)}e^{-\frac{\kappa B}{20}}\varphi'_B\\
&\lesssim\bigg{(}b_c^{\frac{5}{2}}+\mathcal{N}^{\frac{1}{2}}\bigg{)}\bigg{(}\int_{y<-\kappa B}\varepsilon^2\varphi'_B\bigg{)}^{\frac{1}{2}}e^{-\frac{\kappa B}{20}}\\
&\leq\frac{\mu_0}{1000}\int(\varepsilon_y^2+\varepsilon^2)\varphi'_B+Cb_c^{\frac{7}{2}}.
\end{align*}

Finally, by the same strategy we have :
\begin{equation*}\begin{split}
&\quad\;\Bigg{|}\bigg{(}\frac{\lambda_s}{\lambda}+b\bigg{)}\int\Lambda \mathcal{Q}_p\big{[}(\varepsilon+Q_b)|\varepsilon+Q_b|^{p-1}-Q_b^{p}-p\varepsilon \mathcal{Q}_p^{p-1}\big{]}\Bigg{|}\\
&=\Bigg{|}\bigg{(}\frac{\lambda_s}{\lambda}+b\bigg{)}\int\Lambda \mathcal{Q}_p\Big{[}(\varepsilon+Q_b)|\varepsilon+Q_b|^{p-1}-Q_b^{p}-p\varepsilon Q_b^{p-1}+p\varepsilon\big{(}Q_b^{p-1}-\mathcal{Q}_p^{p-1}\big{)}\Big{]}\Bigg{|}\\
&\lesssim \bigg{(}b_c^{\frac{5}{2}}+\mathcal{N}^{\frac{1}{2}}\bigg{)}\bigg{(}\int_{y<\kappa B}\varepsilon^2\varphi'_B+b_c\mathcal{N}^{\frac{1}{2}}+e^{-\frac{\kappa B}{20}}\|\varepsilon\|_{L^{\infty}}\bigg{)}\\
&\leq\frac{\mu_0}{1000}\int(\varepsilon_y^2+\varepsilon^2)\varphi'_B+Cb_c^{\frac{7}{2}}.
\end{split}\end{equation*}

The collection of the above estimates shows that:
\begin{equation}\label{57}
|f_{1,2}|\leq\frac{\mu_0}{100}\int(\varepsilon_y^2+\varepsilon^2)\varphi'_B+Cb_c^{\frac{7}{2}}.
\end{equation}
\subsubsection*{\underline{Term $f_{1,3}$}\normalfont{:}}
We use the identity:
\begin{equation*}\begin{split}
&\quad\int\psi_B\Big{\{}(Q_b)_y\big{[}(\varepsilon+Q_b)|\varepsilon+Q_b|^{p-1}-Q_b^{p}-p\varepsilon Q_b^{p-1}\big{]}\\
&\qquad\quad\quad+\varepsilon_y\big{[}(\varepsilon+Q_b)|\varepsilon+Q_b|^{p-1}-Q_b^{p}\big{]}\Big{\}}\\
&=\frac{1}{p+1}\int\psi_B\partial_y\big{[}|Q_b+\varepsilon|^{p+1}-Q_b^{p+1}-(p+1)\varepsilon Q_b^p\big{]}\\
&=-\frac{1}{p+1}\int\psi'_B\big{[}|Q_b+\varepsilon|^{p+1}-Q_b^{p+1}-(p+1)\varepsilon Q_b^p\big{]}
\end{split}\end{equation*}
and a similar computation (as we do for term $f_{1,2}$) to rewrite $f_{1,3}$:
\begin{equation*}\begin{split}
f_{1,3}&=\frac{2}{p+1}\bigg{(}\frac{x_s}{\lambda}-1\bigg{)}\int\psi'_B\big{[}|Q_b+\varepsilon|^{p+1}-Q_b^{p+1}-(p+1)\varepsilon Q_b^p\big{]}\\
&\quad+2\bigg{(}\frac{x_s}{\lambda}-1\bigg{)}\int(Q_b-\mathcal{Q}_p+\varepsilon)_y\big{(}-\psi'_B\varepsilon_y-\psi_B\varepsilon_{yy}+\varepsilon\zeta_B\big{)}\\
&\quad-2p\bigg{(}\frac{x_s}{\lambda}-1\bigg{)}\int \varepsilon\psi_B\big{[}Q_b^{p-1}(Q_b)_y-\mathcal{Q}_p^{p-1}(\mathcal{Q}_p)_y\big{]}\\
&\quad+2\bigg{(}\frac{x_s}{\lambda}-1\bigg{)}\int \mathcal{Q}_p'\big{[}L\varepsilon-\psi'_B\varepsilon_y+(1-\psi_B)\varepsilon_{yy}-\varepsilon(1-\zeta_B)\big{]}.
\end{split}\end{equation*}

For the first term, we use the bootstrap assumption $\mathcal{N}\leq b_c^3$ to estimate:
\begin{equation*}\begin{split}
&\quad\;\Bigg{|}\bigg{(}\frac{x_s}{\lambda}-1\bigg{)}\int\psi'_B\big{[}|Q_b+\varepsilon|^{p+1}-Q_b^{p+1}-(p+1)\varepsilon Q_b^p\big{]}\Bigg{|}\\
&\lesssim\bigg{(}b_c^{\frac{5}{2}}+\mathcal{N}^{\frac{1}{2}}\bigg{)}\int\psi'_B(|\varepsilon|^{p+1}+\varepsilon^2Q_b^{p-1})\\
&\leq \frac{\mu_0}{1000}\int(\varepsilon_y^2+\varepsilon^2)\varphi'_B.
\end{split}\end{equation*}

For the second term, we first integrate by parts to remove the derivatives of $\varepsilon$, then we use Cauchy-Schwarz inequality, \eqref{ASB} and \eqref{ASB2} to estimate  $\int_{y<\kappa B}$ and use \eqref{49} to estimate $\int_{y>\kappa B}$ as before:
\begin{align*}
&\quad\;\Bigg{|}\bigg{(}\frac{x_s}{\lambda}-1\bigg{)}\int(Q_b-\mathcal{Q}_p)_y\big{(}-\psi'_B\varepsilon_y-\psi_B\varepsilon_{yy}+\varepsilon\zeta_B\big{)}\Bigg{|}\\
&\lesssim \bigg{(}b_c^{\frac{5}{2}}+\mathcal{N}^{\frac{1}{2}}\bigg{)}\bigg{(}b_cB\mathcal{N}^{\frac{1}{2}}+e^{-\frac{\kappa B}{20}}\|\varepsilon\|_{L^{\infty}}\bigg{)}\\
&\leq \frac{\mu_0}{1000}\int(\varepsilon_y^2+\varepsilon^2)\varphi'_B+Cb_c^{\frac{7}{2}},\\
&\quad\;\Bigg{|}\bigg{(}\frac{x_s}{\lambda}-1\bigg{)}\int\varepsilon_y\big{(}-\psi'_B\varepsilon_y-\psi_B\varepsilon_{yy}+\varepsilon\zeta_B\big{)}\Bigg{|}\\
&\lesssim\bigg{(}b_c^{\frac{5}{2}}+\mathcal{N}^{\frac{1}{2}}\bigg{)}\bigg{(}\int(\varepsilon_y^2+\varepsilon^2)\varphi'_B+\frac{1}{B^2}\int_{B^2<y<2B^2}\varepsilon^2\bigg{)}\\
&\leq\frac{\mu_0}{1000}\int(\varepsilon_y^2+\varepsilon^2)\varphi'_B+Cb_c^{\frac{7}{2}}.
\end{align*}

For the next term, we can estimate similarly by dividing the integral into 2 parts:
\begin{align*}
&\quad\;\Bigg{|}\bigg{(}\frac{x_s}{\lambda}-1\bigg{)}\int \varepsilon\psi_B\big{[}Q_b^{p-1}(Q_b)_y-\mathcal{Q}_p^{p-1}(\mathcal{Q}_p)_y\big{]}\Bigg{|}\\
&\lesssim \bigg{(}b_c^{\frac{5}{2}}+\mathcal{N}^{\frac{1}{2}}\bigg{)}\big{(}b_cB\mathcal{N}^{\frac{1}{2}}+e^{-\frac{\kappa B}{20}}\|\varepsilon\|_{L^{\infty}}\big{)}\\
&\leq \frac{\mu_0}{1000}\int(\varepsilon_y^2+\varepsilon^2)\varphi'_B+Cb_c^{\frac{7}{2}}.
\end{align*}

For the last term, we use the cancellation $L\mathcal{Q}_p'$=0 and the orthogonality condition $(\varepsilon,y\mathcal{Q}_p')=(\varepsilon,\Lambda \mathcal{Q}_p-\frac{2}{p-1}\mathcal{Q}_p)=0$ to estimate:
\begin{equation*}\begin{split}
&\quad\;\Bigg{|}\bigg{(}\frac{x_s}{\lambda}-1\bigg{)}\int \mathcal{Q}_p'\big{[}L\varepsilon-\psi'_B\varepsilon_y+(1-\psi_B)\varepsilon_{yy}-\varepsilon(1-\zeta_B)\big{]}\Bigg{|}\\
&=\Bigg{|}\bigg{(}\frac{x_s}{\lambda}-1\bigg{)}\int \mathcal{Q}_p'\big{[}L\varepsilon-\psi'_B\varepsilon_y+(1-\psi_B)\varepsilon_{yy}-\varepsilon(1+\frac{y}{B}-\zeta_B)\big{]}\Bigg{|}\\
&\lesssim \bigg{(}b_c^{\frac{5}{2}}+\mathcal{N}^{\frac{1}{2}}\bigg{)}\big{(}e^{-\frac{\kappa B}{20}}\mathcal{N}^{\frac{1}{2}}+e^{-\frac{\kappa B}{20}}\|\varepsilon\|_{L^{\infty}}\big{)}\\
&\leq \frac{\mu_0}{1000}\int(\varepsilon_y^2+\varepsilon^2)\varphi'_B+Cb_c^{\frac{7}{2}}.
\end{split}\end{equation*}

In conclusion, we have:
\begin{equation}\label{58}
|f_{1,3}|\leq\frac{\mu_0}{100}\int(\varepsilon_y^2+\varepsilon^2)\varphi'_B+Cb_c^{\frac{7}{2}}.
\end{equation}
\subsubsection*{\underline{Term $f_{1,4}$}\normalfont{:}}Recall that
$$f_{1,4}=-2b_s\int P_b\Big{\{}-(\psi_B\varepsilon_y)_y+\varepsilon\zeta_B-\psi_B\big{[}(\varepsilon+Q_b)|\varepsilon+Q_b|^{p-1}-Q_b^{p}\big{]}\Big{\}}.$$
We estimate after integration by parts to remove the derivatives of $\varepsilon$ and then divide the integral into 2 parts as before:
\begin{align*}
&\quad\;\Bigg{|}\int P_b(-(\psi_B\varepsilon_y)_y+\varepsilon\zeta_B)\Bigg{|}=\Bigg{|}\int\big{(}(P_b)_y\varepsilon_y\psi_B+\varepsilon P_b\zeta_B\big{)}\Bigg{|}\\
&\lesssim\Bigg{|}\int_{y<\kappa B}\big{(}|\varepsilon P_b|\zeta_B+|\varepsilon_y (P_b)_y|\psi_B\big{)}+\int_{y>\kappa B} e^{-\frac{y}{8}}\big{(}|\varepsilon|+|\varepsilon_y|\big{)}\Bigg{|}\\
&\lesssim B\bigg{(}\int_{y<\kappa B}(\varepsilon_y^2+\varepsilon^2)\varphi'_B\bigg{)}^{\frac{1}{2}}+e^{-\frac{\kappa B}{20}}\bigg{(}\int_{y>\kappa B}\varepsilon_y^2+\|\varepsilon\|_{L^{\infty}}^2\bigg{)}^{\frac{1}{2}}\\
&\lesssim B\mathcal{N}^{\frac{1}{2}}+b_c^{\frac{7}{2}}.
\end{align*}
For the nonlinear term, the same strategy shows:
\begin{align*}
&\quad\;\Bigg{|}\int P_b\psi_B\big{[}(\varepsilon+Q_b)|\varepsilon+Q_b|^{p-1}-Q_b^{p}\big{]}\Bigg{|}\lesssim\int|P_b|\psi_B(Q_b^{p-1}|\varepsilon|+|\varepsilon|^p)\\
&\lesssim B\bigg{(}\int_{y<\kappa B}\varepsilon^2\varphi'_B\bigg{)}^{\frac{1}{2}}+e^{-\frac{\kappa B}{20}}\|\varepsilon\|_{L^{\infty}}\\
&\lesssim B\mathcal{N}^{\frac{1}{2}}+b_c^{\frac{7}{2}}.
\end{align*}
Recall from \eqref{MES3} we have:
$$|b_s|\lesssim b_c^{\frac{5}{2}}+b_c\mathcal{N}^{\frac{1}{2}}.$$
Then we obtain:
\begin{equation}\label{59}
|f_{1,4}|\leq \frac{\mu_0}{100}\int(\varepsilon_y^2+\varepsilon^2)\varphi'_B+Cb_c^{\frac{7}{2}}.
\end{equation}
\subsubsection*{\underline{Term $f_{1,5}$}\normalfont{:}}
Recall from \eqref{APP} we have for $k=0,1$:
$$|\partial_y^k\Phi_b|\lesssim b_c|\tilde{b}||\partial_y^kQ_b|+b_c^2\mathbf{1}_{[-2,-1]}(b_cy)+e^{-\frac{1}{10b_c}}\mathbf{1}_{[1,2]}(b_cy)$$
So after integration by parts, we have:
\begin{align*}
&\quad\;\Bigg{|}\int\Phi_b(-(\psi_B\varepsilon_y)_y+\varepsilon\zeta_B)\Bigg{|}=\Bigg{|}\int(\Phi_b)_y\psi_B\varepsilon_y+\int\Phi_b\varepsilon\zeta_B\Bigg{|}\\
&\lesssim b_c^{\frac{5}{2}}\int \big{(}Q_b+|\partial_yQ_b|\big{)}\big{(}|\varepsilon_y\psi_B|+|\varepsilon\zeta_B|\big{)}+b_c^2\int_{y\sim -b_c^{-1}}\big{(}|\varepsilon_y\psi_B|+|\varepsilon\zeta_B|\big{)}\\
&\quad+e^{-\frac{1}{10b_c}}\int_{y\sim b_c^{-1}}|\varepsilon_y\psi_B|+|\varepsilon\zeta_B|\\
&\lesssim b_c^{\frac{5}{2}}B\mathcal{N}^{\frac{1}{2}}+e^{-\frac{1}{2Bb_c}}(\|\varepsilon\|_{L^{\infty}}+\|\varepsilon_y\|_{L^2})\\
&\leq\frac{\mu_0}{1000}\int(\varepsilon_y^2+\varepsilon^2)\varphi'_B+Cb_c^{\frac{7}{2}}.
\end{align*}
Here we use the fact that $|\psi_B(y)|+|\zeta_B(y)|\lesssim e^{-\frac{1}{2Bb_c}}\lesssim b_c^{10}$, for all $y\in[-2b_c^{-1},-b_c^{-1}]$.

The nonlinear term can be similarly estimated as before:
\begin{align*}
&\quad\;\Bigg{|}\int\Phi_b\big{[}(\varepsilon+Q_b)|\varepsilon+Q_b|^{p-1}-Q_b^{p}\big{]}\psi_B\Bigg{|}\lesssim \int|\Phi_b|\psi_B\big{(}|\varepsilon|^p+|Q_b^{p-1}\varepsilon|\big{)}\\
&\lesssim b_c^2\int_{y\sim -b_c^{-1}}\big{(}|\varepsilon Q_b^{p-1}|+|\varepsilon|^p\big{)}\psi_B+e^{-\frac{1}{10b_c}}\int_{y\sim b_c^{-1}}\big{(}|\varepsilon Q_b^{p-1}|+|\varepsilon|^p\big{)}\psi_B\\
&\quad+b_c^{\frac{5}{2}}\int Q_b\big{(}|\varepsilon Q_b^{p-1}|+|\varepsilon|^p\big{)}\psi_B\\
&\leq\frac{\mu_0}{1000}\int(\varepsilon_y^2+\varepsilon^2)\varphi'_B+Cb_c^{\frac{7}{2}}.
\end{align*}
Thus we have shown that:
\begin{equation}\label{510}
|f_{1,5}|\leq \frac{\mu_0}{100}\int(\varepsilon_y^2+\varepsilon^2)\varphi'_B+Cb_c^{\frac{7}{2}}.
\end{equation}
\subsection*{Step 3 \normalfont{Control of $f_2$}}
Recall that:
$$f_2=2\frac{\lambda_s}{\lambda}\int\Lambda\varepsilon\Big{\{}-(\psi_B\varepsilon_y)_y+\varepsilon\zeta_B-\psi_B\big{[}(\varepsilon+Q_b)|\varepsilon+Q_b|^{p-1}-Q_b^{p}\big{]}\Big{\}}.$$
We first claim the following identities:
\begin{align}
&\int\Lambda \varepsilon(\psi_B\varepsilon_y)_y=-(1-\sigma_c)\int\varepsilon_y^2\psi_B+\frac{1}{2}\int y\psi'_B\varepsilon_y^2,\label{50001}\\
&\int\Lambda \varepsilon(\varepsilon\zeta_B)=-\sigma_c\int\varepsilon^2\zeta_B-\frac{1}{2}\int y\zeta'_B\varepsilon^2,\label{50002}\\
&\int\Lambda \varepsilon\psi_B\big{[}(\varepsilon+Q_b)|\varepsilon+Q_b|^{p-1}-Q_b^{p}\big{]}\nonumber\\
&=\frac{1}{p+1}\int\bigg{(}\frac{p+3}{p-1}\psi_B-y\psi'_B\bigg{)}\big{[}|Q_b+\varepsilon|^{p+1}-Q_b^{p+1}-(p+1)\varepsilon Q_b^p\big{]}\label{50003}\\
&\quad-\int\psi_B\Lambda Q_b\big{[}(\varepsilon+Q_b)|\varepsilon+Q_b|^{p-1}-Q_b^{p}-p\varepsilon Q_b^{p-1}\big{]}.\nonumber
\end{align}
We can see \eqref{50001} and \eqref{50002} are easily obtained by integrating by parts. While for \eqref{50003}, we have the following computation:
\begin{align*}
&\quad\;\int\Lambda(\varepsilon+Q_b)\psi_B\big{[}(\varepsilon+Q_b)|\varepsilon+Q_b|^{p-1}-Q_b^{p}\big{]}\\
&=\int\frac{2}{p-1}\psi_B\big{[}|\varepsilon+Q_b|^{p+1}-Q_b^{p+1}-\varepsilon Q_b^p\big{]}\\
&\quad+\int y(\varepsilon+Q_b)'\psi_B\big{[}(\varepsilon+Q_b)|\varepsilon+Q_b|^{p-1}-Q_b^{p}\big{]}\\
&=\frac{2}{p-1}\int\psi_B\big{[}|\varepsilon+Q_b|^{p+1}-Q_b^{p+1}-(p+1)\varepsilon Q_b^p\big{]}+p\int\psi_B\varepsilon Q_b^{p-1}(\frac{2}{p-1}Q_b)+\Delta,
\end{align*}
with $$\Delta=\int y(\varepsilon+Q_b)'\psi_B\big{[}(\varepsilon+Q_b)|\varepsilon+Q_b|^{p-1}-Q_b^{p}\big{]}.$$
Then we use the following identity:
\begin{multline*}
\big{[}|\varepsilon+Q_b|^{p+1}-Q_b^{p+1}-(p+1)\varepsilon Q_b^p\big{]}'\\=(p+1)(\varepsilon+Q_b)'\big{[}(\varepsilon+Q_b)|\varepsilon+Q_b|^{p-1}-Q_b^{p}\big{]}-p(p+1)\varepsilon Q_b'Q_b^{p-1}
\end{multline*} 
to compute:
\begin{align*}
\Delta&=\frac{1}{p+1}\int y\psi_B \big{[}|\varepsilon+Q_b|^{p+1}-Q_b^{p+1}-(p+1)\varepsilon Q_b^p\big{]}'-p\int y\psi_B \varepsilon Q_b'Q_b^{p-1}\\
&=-\frac{1}{p+1}\int (\psi_B-y\psi_B') \big{[}|\varepsilon+Q_b|^{p+1}-Q_b^{p+1}-(p+1)\varepsilon Q_b^p\big{]}\\
&\quad-p\int \psi_B \varepsilon Q_b^{p-1}(yQ_b').
\end{align*}
Collecting all the above computation, we have:
\begin{align*}
&\quad\;\int\Lambda(\varepsilon+Q_b)\psi_B\big{[}(\varepsilon+Q_b)|\varepsilon+Q_b|^{p-1}-Q_b^{p}\big{]}\\
&=\frac{1}{p+1}\int\bigg{(}\frac{p+3}{p-1}\psi_B-y\psi'_B\bigg{)}\big{[}|Q_b+\varepsilon|^{p+1}-Q_b^{p+1}-(p+1)\varepsilon Q_b^p\big{]}\\
&\quad+p\int \psi_B\varepsilon Q_b^{p-1}(\Lambda Q_b),
\end{align*}
which is just \eqref{50003}.

Now we can use \eqref{50001}--\eqref{50003} to estimate $f_{1,2}$. Since
$$\frac{\lambda_s}{\lambda}\sim-b_c<0,$$
we can drop the negative term to obtain:
\begin{equation*}
2\frac{\lambda_s}{\lambda}\int\Lambda \varepsilon(-\psi_B\varepsilon)_y\leq 0
\end{equation*}
and
\begin{align*}
2\frac{\lambda_s}{\lambda}\int\Lambda \varepsilon\zeta_B&\leq C\bigg{(}b_c^2\int\varepsilon^2\zeta_B+b_c\int_{0<y<B}y\varphi'_B\varepsilon^2+b_c\int y\eta'_B\varepsilon^2\bigg{)}\\
&\leq C\bigg{(}b_cB\int_{y<\kappa B}\varepsilon^2\varphi'_B+b_c^{\frac{1}{2}}\int_{\kappa B<y<2B^2}\varepsilon^2\bigg{)}\\
&\leq C\bigg{(}b_c^{\frac{1}{2}}\int_{y<\kappa B}\varepsilon^2\varphi'_B+\|\varepsilon\|_{L^{\infty}(y>\kappa B)}^2\bigg{)}\\
&\leq \frac{\mu_0}{1000}\int(\varepsilon_y^2+\varepsilon^2)\varphi'_B+Cb_c^{\frac{7}{2}}.
\end{align*}
For the nonlinear term we divide the integral into 3 parts:
$$\int\Lambda \varepsilon\psi_B\big{[}(\varepsilon+Q_b)|\varepsilon+Q_b|^{p-1}-Q_b^{p}\big{]}=m^{<}+m^{\sim}+m^{>},$$
where $m^{<}$, $m^{\sim}$ and $m^{>}$ correspond to the integration on $y<-\kappa B$, $|y|<\kappa B$ and $y>\kappa B$ respectively.
For $y>\kappa B$, we have:
\begin{align*}
|m^{>}|\lesssim \int_{y>\kappa B}\big{(}|\varepsilon|^{p+1}+\varepsilon^2 e^{-\frac{|y|}{2}}\big{)}\leq b_c^{\frac{7}{2}}.
\end{align*}
Next for $|y|<\kappa B$, we can estimate:
$$|m^{\sim}|\lesssim \int_{|y|<\kappa B}|\varepsilon|^{p+1}+\varepsilon^2\lesssim B\int(\varepsilon_y^2+\varepsilon^2)\varphi'_B.$$
Finally, for $y<-\kappa B$, we have $|Q_b|+|\Lambda Q_b|\lesssim b_c$ on this region. Together with $\|\varepsilon\|_{L^{\infty}}\leq b_c^{\frac{1}{2}}$, we obtain:
\begin{align*}
|m^{<}|&\lesssim \Big{(}\|\varepsilon\|_{L^{\infty}}^{p+1}+b_c^{p-1}\|\varepsilon\|_{L^{\infty}}^2+b_c\|\varepsilon\|_{L^{\infty}}^{p}\Big{)}\int_{y<-\kappa B}\big{(}|\psi_B|+|y\psi'_B|\big{)}\\
&\lesssim Bb_c^{3}\leq b_c^{\frac{5}{2}}.
\end{align*}
Therefore, we obtain:
$$\Bigg{|}\frac{\lambda_s}{\lambda}\int\Lambda \varepsilon\psi_B\big{[}(\varepsilon+Q_b)|\varepsilon+Q_b|^{p-1}-Q_b^{p}\big{]}\Bigg{|}\lesssim\frac{\mu_0}{1000}\int(\varepsilon_y^2+\varepsilon^2)\varphi'_B+b_c^{\frac{7}{2}},$$
hence
\begin{equation}\label{511}
f_{2}\leq \frac{\mu_0}{100}\int(\varepsilon_y^2+\varepsilon^2)\varphi'_B+Cb_c^{\frac{7}{2}}.
\end{equation}
\subsection*{Step 4 \normalfont{Control of $f_3$}}
First from (3.8)
$$|(Q_b)_s|=|b_sP_b|\lesssim b_c^{\frac{5}{2}}|P_b|.$$
Recalling that $P_b$ decays exponentially on the right, we have:
\begin{equation}\label{512}\begin{split}
|f_3|&\lesssim b_c^{\frac{5}{2}}\bigg{(}\int_{y<\kappa B}\psi_B(|\varepsilon|^p+\varepsilon^2)+e^{-\frac{\kappa B}{20}}\|\varepsilon\|_{L^{\infty}}^2\bigg{)}\\
&\leq \frac{\mu_0}{100}\int(\varepsilon_y^2+\varepsilon^2)\varphi'_B+Cb_c^{\frac{7}{2}}.
\end{split}\end{equation}
Collecting \eqref{56}--\eqref{512}, we conclude the proof of \eqref{51} and \eqref{52}.
\subsection*{Step 5 \normalfont{Coercivity of $\mathcal{F}$}}
As before we divide the integral into 2 parts, $\mathcal{F}^{<}$ and $\mathcal{F}^{>}$, which correspond to the integration on $y<\kappa B$ and $y>\kappa B$ respectively.

For the upper bound of $\mathcal{F}$, recall that $B=b_c^{-\frac{1}{20}}$, we have for $y>\kappa B$,
\begin{align*}
|\mathcal{F}^{>}|&\lesssim \int_{y>\kappa B}\big{(}\varepsilon_y^2+|\varepsilon|^{p+1}+\varepsilon^2 e^{-\frac{|y|}{2}}\big{)}+\int_{\kappa B<y<2B^2}\varepsilon^2\\
&\lesssim b_c^8+B^2\|\varepsilon\|_{L^{\infty}(y>\kappa B)}^2\lesssim b_c^8+b_c^{-\frac{1}{10}+4}\\
&\leq b_c^{\frac{7}{2}}.
\end{align*}
And for $y<\kappa B$, we have:
\begin{align*}
|\mathcal{F}^{<}|&\lesssim \int_{y<\kappa B}\big{(}\varepsilon_y^2+\varepsilon^2+|\varepsilon|^{p+1}\big{)}\psi_B\\
&\lesssim B\int_{y<\kappa B}(\varepsilon_y^2+\varepsilon^2)\varphi'_B\leq \mathcal{N}.
\end{align*}
Then the upper bound follows.

For the lower bound, we rewrite $\mathcal{F}$:
\begin{align*}
\mathcal{F}=&\int\big{(}\varepsilon_y^2\psi_B+\varepsilon^2\zeta_B-p\psi_B\mathcal{Q}_p^{p-1}\varepsilon^2\big{)}-p\int\psi_B(Q_b^{p-1}-\mathcal{Q}_p^{p-1})\varepsilon^2\\
&-\frac{2}{p+1}\int\psi_B\Big{[}|Q_b+\varepsilon|^{p+1}-Q_b^{p+1}-(p+1)\varepsilon Q_b^{p}-\frac{p(p+1)}{2}Q_b^{p-1}\varepsilon^2\Big{]}.
\end{align*}
First, we have:
\begin{align*}
\quad\Bigg{|}\int\psi_B(Q_b^{p-1}-\mathcal{Q}_p^{p-1})\varepsilon^2\Bigg{|}\lesssim b_cB\int_{y<\kappa B}\varepsilon^2\varphi'_B+e^{-\frac{\kappa B}{20}}\|\varepsilon\|_{L^{\infty}}^2\leq b_c^{\frac{1}{2}}\mathcal{N}+b_c^{\frac{7}{2}}.
\end{align*}
For the nonlinear term, we use similar technique as before to estimate:
\begin{align*}
&\quad\;\Bigg{|}\int\psi_B\Big{[}|Q_b+\varepsilon|^{p+1}-Q_b^{p+1}-(p+1)\varepsilon Q_b^{p}-\frac{p(p+1)}{2}Q_b^{p-1}\varepsilon^2\Big{]}\Bigg{|}\\
&\lesssim\int_{y<\kappa B}\big{(}|\varepsilon|^{p+1}+Q_b^{p-2}|\varepsilon|^3\big{)}\psi_B+\int_{y>\kappa B}|\varepsilon|^{p+1}+e^{-\frac{\kappa B}{20}}\|\varepsilon\|_{L^{\infty}}^3\\
&\leq b_c^{\frac{1}{2}}\mathcal{N}+b_c^{\frac{7}{2}}.
\end{align*}
Finally, we claim there exists a constant $0<\kappa<1$ independent of $b$ (recall $\kappa$ appears in the definition of the weight function $\varphi$) such that the following holds for some universal constant $\nu_1>0$:

\begin{equation}\label{COF}
\int\big{(}\varepsilon_y^2\psi_B+\varepsilon^2\zeta_B-p\psi_B\mathcal{Q}_p^{p-1}\varepsilon^2\big{)}\geq \nu_1\mathcal{N}-\frac{1}{\nu_1}b_c^{\frac{7}{2}},
\end{equation}
Then the lower bound follows immediately. We leave the proof of \eqref{COF} in Appendix A.

This concludes the proof of Proposition \ref{MF}.
\end{proof}

\section{Existence and stability of the self-similar dynamics}
\subsection{Closing the bootstrap}In this section, we will compete the proof of Proposition \ref{BS}.

\subsubsection*{{\bf Step 1}}{\it Dynamical trapping on $b$}.

We first prove the dynamical trapping of $b$, i.e. \eqref{BS1}. Suppose for some $s_0\in[0,s^*)$, we have $\tilde{b}(s_0)\geq b_c^{\frac{3}{2}+2\nu}$. By the choice of the initial data, i.e. (2.28), we can find some $s_1\in[0,s_0)$ such that $\tilde{b}(s_1)=b_c^{\frac{3}{2}+\frac{5}{2}\nu}$ and $\tilde{b}(s)\geq b_c^{\frac{3}{2}+\frac{5}{2}\nu}$ for all $s\in[s_1,s_0)$, then $\tilde{b}_s(s_1)\geq 0$. From \eqref{BS3} and \eqref{MES3}, we have:
\begin{equation}
\tilde{b}_s(s_1)\leq-c_p\tilde{b}(s_1)b_c+b_c^{\frac{5}{2}+3\nu}\leq -c_p b_c^{\frac{5}{2}+\frac{5\nu}{2}}+b_c^{\frac{5}{2}+3\nu}<0,
\end{equation}
if $b_c$ is small enough (or equivalently $p^*(\nu)$ is close enough to 5) such that $b_c^{\nu}\ll 1$. We get a contradiction. The opposite bound is similar.

\subsubsection*{{\bf Step 2}}{\it Pointwise bound of the localised Sobolev norm of $\varepsilon$}.

The bootstrap bound \eqref{BB3} is a consequence of the monotonicity formula which we proved in the last section. We argue again by contradiction and assume that there exists $s_2\in(0,s^*)$ s.t. $\mathcal{N}(s_2)\geq b_c^{3+8\nu}$. By continuity and the choice of initial data, i.e. (2.29), we can find $s_3\in (0,s_2)$ such that for all $ s\in[s_3,s_2]$, $\mathcal{N}(s)\geq b_c^{3+10\nu}$, and $\mathcal{N}(s_3)=b_c^{3+10\nu}$. Then we have for all $ s\in[s_3,s_2]$:
$$\int\big{(}\varepsilon^2_y(s)+\varepsilon^2(s)\big{)}\varphi_B'\geq \frac{1}{B}b_c^{3+10\nu}=b_c^{3+\frac{1}{20}+10\nu}\gg b_c^{\frac{7}{2}},$$
provided that $\nu$ is chosen small enough (say $\nu=\frac{1}{1000}$).
From \eqref{MF1}, we know $d\mathcal{F}/ds\leq 0$ on $[s_3,s_2]$, which yields $\mathcal{F}(s_3)\geq\mathcal{F}(s_2)$. Thus \eqref{MF2} leads to:
\begin{multline*}
b_c^{3+8\nu}-b_c^{\frac{7}{2}}\leq \mathcal{N}(s_2)-b_c^{\frac{7}{2}}\lesssim\mathcal{F}(s_2)\leq\mathcal{F}(s_3)\lesssim \mathcal{N}(s_3)+b_c^{\frac{7}{2}}= b_c^{3+10\nu}+b_c^{\frac{7}{2}}.
\end{multline*}
This is a contradiction since $b_c^{\nu}\ll 1$. Therefore we conclude the proof of \eqref{BB3}.
\subsubsection*{{\bf Step 3}}{\it $L^{p_0}$ control of $\varepsilon$}.

For the $L^{p_0}$ norm of $\varepsilon$, it is  more convenient to work with the original variables. Consider the decomposition (see \eqref{GD}):
$$u(t,x)=Q_{S}(t,x)+\tilde{u}(t,x)=\frac{1}{\lambda(t)^{\frac{2}{p-1}}}(Q_{b(t)}+\varepsilon(t))\bigg{(}\frac{x-x(t)}{\lambda(t)}\bigg{)}.$$
By rescaling, it is sufficient to prove for all $t\in[0,T^*)$:
\begin{equation}\label{61}
\|\tilde{u}(t)\|_{L^{p_0}}\leq \frac{b_c^{\frac{13}{28}}}{\lambda(t)^{\frac{2}{p-1}-\frac{1}{p_0}}}.
\end{equation}
To prove this, we write down the equation of $\tilde{u}$ and use a refined Strichartz estimate for the Airy equations. Indeed, the equation of $\tilde{u}$ is:
$$\partial_t\tilde{u}+\tilde{u}_{xxx}=-\mathcal{E}-\big{(}f(\tilde{u})\big{)}_x$$
with
\begin{align*}
&\mathcal{E}=\frac{1}{\lambda(t)^{3+\frac{2}{p-1}}}\bigg{[}-\Phi_b+b_sP_b-\bigg{(}\frac{\lambda_s}{\lambda}+b\bigg{)}\Lambda Q_b-\bigg{(}\frac{x_s}{\lambda}-1\bigg{)}Q_b'\bigg{]}\bigg{(}t,\frac{x-x(t)}{\lambda(t)}\bigg{)},\\
&f(\tilde{u})=(Q_{S}+\tilde{u})|Q_{S}+\tilde{u}|^{p-1}-Q_{S}|Q_{S}|^{p-1},
\end{align*}
where $\Phi_b$ is defined in (3.3).

Now we state the result of D. Foschi in \cite{F} about the inhomogeneous Strichartz estimates:
\begin{proposition}[D. Foschi, Theorem 1.4 of \cite{F}]
Consider a family of linear operators $U(t)$: $H\rightarrow L_X^2$, $t\in\mathbb{R}$, where $H$ is a Hilbert space. Suppose the following properties of $U(t)$ hold:
\begin{enumerate}
\item For all $t\in \mathbb{R}$, $h\in H$:
$$\|U(t)h\|_{L_X^2}\lesssim \|h\|_{H}.$$
\item There exists a constant $\sigma>0$, such that for all $f\in L_X^1\cap L_X^2$ and $t,s\in\mathbb{R}$,  there holds:
$$\|U(t)U(s)^*f\|_{L_X^{\infty}}\lesssim \frac{1}{|t-s|^{\sigma}}\|f\|_{L_X^1}.$$
\end{enumerate}
We say a pair $(q,r)\in[2,+\infty]^2$ is $\sigma$-acceptable if and only if they satisfy:
$$\frac{1}{q}<2\sigma\bigg{(}\frac{1}{2}-\frac{1}{r}\bigg{)}\text{ or } (q,r)=(+\infty,2).$$
Consider $0<\sigma<1$ and 2 $\sigma$-acceptable pairs: $(q_i,r_i)$, $i=1,2$, such that the scaling rule is satisfied:
$$\frac{1}{q_1}+\frac{\sigma}{r_1}+\frac{1}{q_2}+\frac{\sigma}{r_2}=\sigma.$$
Then we have the following inhomogeneous Strichartz estimates:
\begin{equation}
\bigg{\|}\int_{s<t}U(t)U(s)^*F(s)ds\bigg{\|}_{L_t^{q_1}L_X^{r_1}}\lesssim\|F\|_{L_t^{q'_2}L_X^{r'_2}}.
\end{equation}
\end{proposition}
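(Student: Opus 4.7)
The plan is to reduce the claimed bound to a dyadic estimate on the bilinear form
\[
T(F,G)=\int\!\!\int_{s<t}\langle U(t)U(s)^*F(s),\,G(t)\rangle\,ds\,dt,
\]
so that by duality the target estimate becomes $|T(F,G)|\lesssim\|F\|_{L^{q_2'}_tL^{r_2'}_X}\|G\|_{L^{q_1'}_tL^{r_1'}_X}$. First I would combine the two hypotheses: assumption (1) via a standard $TT^*$ argument yields $\|U(t)U(s)^*\|_{L^2\to L^2}\lesssim 1$, while assumption (2) is the dispersive bound $\|U(t)U(s)^*\|_{L^1\to L^\infty}\lesssim|t-s|^{-\sigma}$. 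Riesz--Thorin interpolation between these endpoints gives the workhorse kernel bound
\[
\|U(t)U(s)^*f\|_{L^r_X}\lesssim |t-s|^{-2\sigma(1/2-1/r)}\|f\|_{L^{r'}_X},\qquad r\in[2,\infty],
\]
which is the only spatial ingredient needed for the rest of the argument.

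With this in hand I would dyadically decompose $\{|t-s|\sim 2^j\}$ and control each piece $T_j$ by inserting the kernel bound and applying H\"older in time. In the diagonal case $(q_1,r_1)=(q_2,r_2)$, the scaling identity $1/q_1+\sigma/r_1+1/q_2+\sigma/r_2=\sigma$ is exactly what converts the sum over $j$ into a Hardy--Littlewood--Sobolev convolution in time, and the acceptability condition $1/q<2\sigma(1/2-1/r)$ keeps the HLS exponent strictly inside $(-1,0)$; the sharp endpoint $(\infty,2)$ is immediate from assumption (1). Off-diagonal pairs are handled by Keel--Tao style bilinear real interpolation between two admissible diagonal endpoints, with the Christ--Kiselev lemma used at the end to pass from the untruncated double integral to the retarded piece $s<t$ (which is legitimate as soon as $q_1\neq q_2'$).

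The main obstacle is the genuinely non-admissible regime in which neither $(q_1,r_1)$ nor $(q_2,r_2)$ lies on the Keel--Tao line, only in the strictly larger acceptable region; this is precisely Foschi's refinement of Keel--Tao, and requires a careful bilinear interpolation together with the scaling identity to ensure that no logarithmic divergence survives at intermediate dyadic scales. Since this is a purely functional-analytic statement, detached from the gKdV dynamics, I would not reproduce the interpolation machinery in full and would instead cite \cite{F} directly; in the application later in the paper one only has to verify hypotheses (1)--(2) for the Airy propagator with $\sigma=1/3$ and feed them into the present proposition.
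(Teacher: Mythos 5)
The paper does not prove this proposition at all: it is quoted verbatim as Theorem~1.4 of \cite{F}, used as a black box, and the only work done in the paper is to verify hypotheses (1)--(2) for the Airy semigroup $U(t)=\mathbf{1}_{[0,\infty)}(t)e^{-t\partial_x^3}$ with $\sigma=\tfrac13$ in Corollary~\ref{RSEF}. Your decision to cite \cite{F} rather than reproduce the interpolation machinery is therefore exactly the approach the paper takes, and your outline of Foschi's ingredients (duality, $TT^*$ plus Riesz--Thorin to get $\|U(t)U(s)^*\|_{L^{r'}\to L^r}\lesssim|t-s|^{-2\sigma(1/2-1/r)}$, dyadic decomposition of $|t-s|$, Hardy--Littlewood--Sobolev on the diagonal, bilinear interpolation off it) is accurate. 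One small internal inconsistency worth flagging: you define $T(F,G)$ with the retarded constraint $s<t$ already built in, so the later appeal to the Christ--Kiselev lemma ``to pass from the untruncated double integral to the retarded piece'' is superfluous in your setup; moreover Foschi's argument in \cite{F} deliberately avoids Christ--Kiselev (which fails at $q_1=q_2'$) by estimating the retarded bilinear form directly through the dyadic decomposition and bilinear real interpolation, which is what lets him reach the full acceptable range beyond the Keel--Tao admissible line.
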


Here, we can use Proposition 6.1 to derive a refined Strichartz estimate for the Airy equations with zero initial data. Let $U(t)=\mathbf{1}_{[0,+\infty)}(t)e^{-t\partial_x^3}$, then by the theory of oscillatory integral, we have%
\footnote{See Page 13--15 in \cite{KTV}.}
:
$$
\|U(t)h\|_{L^2}\leq \|h\|_{L^2},\quad\|U(t)h\|_{L^{\infty}}\lesssim \frac{1}{|t|^{\frac{1}{3}}}\|h\|_{L^1},\quad \text{for }\forall t\not= 0.
$$
Therefore, the following refined Strichartz estimates hold for Airy equations with zero initial data:
\begin{corollary}[Refined Strichartz estimates]\label{RSEF}For all $\frac{1}{3}$-acceptable pairs $(q_1,r_1)$ and $(q_2,r_2)$, if they satisfy: $$\frac{1}{q_1}+\frac{1}{3r_1}+\frac{1}{q_2}+\frac{1}{3r_2}=\frac{1}{3},$$ then there holds:
\begin{equation}\label{RSF}
\bigg{\|}\int_0^te^{-(t-s)\partial_x^3}\big{(}h(s,\cdot)\big{)}ds\bigg{\|}_{L_t^{q_1}L_x^{r_1}}\lesssim\|h\|_{L_t^{q'_2}L_x^{r'_2}}.
\end{equation}
\end{corollary}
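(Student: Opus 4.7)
The plan is to deduce the corollary as a direct specialization of Proposition~6.1 (Foschi's inhomogeneous Strichartz theorem), taking the Hilbert space $H = L^2(\mathbb{R})$, the decay exponent $\sigma = 1/3$, and the one-parameter family
$$
U(t) = \mathbf{1}_{[0,+\infty)}(t)\, e^{-t\partial_x^3}.
$$
With this choice the convolution operator $\int_{s<t} U(t)U(s)^* F(s)\,ds$ reduces, for $t\geq 0$, to exactly the Duhamel integral $\int_0^t e^{-(t-s)\partial_x^3} F(s)\,ds$ appearing in the statement, so once Foschi's hypotheses are verified the conclusion follows automatically.

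First I would check the two abstract hypotheses. The energy estimate $\|U(t)h\|_{L^2} \leq \|h\|_{L^2}$ is immediate: either $U(t)=0$ or $U(t)$ is the Fourier multiplier with symbol $e^{it\xi^3}$, which has modulus one and is hence unitary on $L^2$. For the dispersive estimate one uses the kernel representation
$$
e^{-t\partial_x^3}h(x) = \frac{1}{(3t)^{1/3}}\int_{\mathbb{R}} \mathrm{Ai}\!\left(\frac{x-y}{(3t)^{1/3}}\right) h(y)\,dy,\qquad t>0,
$$
together with boundedness of the Airy function $\mathrm{Ai}$ (a standard stationary phase / oscillatory integral computation, as recorded in the reference to \cite{KTV} already cited in the paper). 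This yields $\|e^{-t\partial_x^3}h\|_{L^{\infty}} \lesssim |t|^{-1/3}\|h\|_{L^1}$, and since $U(t)U(s)^* = \mathbf{1}_{[0,\infty)}(t)\mathbf{1}_{[0,\infty)}(s)\,e^{-(t-s)\partial_x^3}$, the required bound $\|U(t)U(s)^* f\|_{L^\infty} \lesssim |t-s|^{-1/3}\|f\|_{L^1}$ follows (with the convention that $U(t)U(s)^* = 0$ when either $t$ or $s$ is negative).

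Next I would match the acceptability and scaling conditions. Substituting $\sigma = 1/3$ into Foschi's scaling identity
$$
\frac{1}{q_1} + \frac{\sigma}{r_1} + \frac{1}{q_2} + \frac{\sigma}{r_2} = \sigma
$$
produces exactly the relation $\frac{1}{q_1}+\frac{1}{3r_1}+\frac{1}{q_2}+\frac{1}{3r_2} = \frac{1}{3}$ assumed in the corollary, and the $\sigma$-acceptability condition in Proposition~6.1 becomes precisely the $\frac{1}{3}$-acceptability hypothesis imposed on $(q_i,r_i)$. Applying Proposition~6.1 then gives
$$
\bigg\|\int_{s<t} U(t)U(s)^* h(s)\,ds\bigg\|_{L_t^{q_1}L_x^{r_1}} \lesssim \|h\|_{L_t^{q_2'}L_x^{r_2'}},
$$
and restricting to $t\geq 0$, where the left-hand side equals $\|\int_0^t e^{-(t-s)\partial_x^3}h(s,\cdot)\,ds\|_{L_t^{q_1}L_x^{r_1}}$, yields \eqref{RSF}.

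There is no genuine obstacle here: the statement is essentially a dictionary translation of Foschi's abstract theorem into the Airy setting, and the only thing one must be careful about is inserting the indicator function $\mathbf{1}_{[0,\infty)}$ into $U(t)$ so that Foschi's symmetric retarded kernel reproduces the forward-in-time Duhamel integral rather than its symmetric counterpart. If I had to name a mildly delicate point, it would be this time-causality bookkeeping, together with the verification that the Airy decay $|t|^{-1/3}$ holds uniformly for all $t\neq 0$ (not merely for $|t|$ large), which is what justifies taking the dispersive constant in Foschi's hypothesis to be $|t-s|^{-1/3}$ globally rather than only in an asymptotic regime.
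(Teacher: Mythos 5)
Your proposal is correct and follows essentially the same route as the paper: both take $U(t)=\mathbf{1}_{[0,+\infty)}(t)e^{-t\partial_x^3}$, verify the $L^2$ bound and the $|t|^{-1/3}$ dispersive estimate (the paper cites \cite{KTV} for what you derive from the Airy kernel), and then read off the corollary from Proposition 6.1 with $\sigma=\tfrac13$. Your extra remarks on the indicator-function bookkeeping and the global validity of the $|t|^{-1/3}$ decay are accurate but do not change the argument.
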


Now we fix $\forall t\in[0,T^*)$, and choose
$$(q_1,r_1)=(+\infty,p_0),\quad \frac{1}{r_2}=\frac{1}{p_0}-\delta,\quad \frac{1}{q_2}=\frac{p_0-2}{3p_0}+\frac{\delta}{3},$$
with $\delta>0$ to be chosen later.
It is easy to check $(q_i,r_i)$ satisfy the conditions in Corollary 6.2. Then we have the following estimate on $[0,t]$:
\begin{equation}\label{RSE}\begin{split}
\|\tilde{u}\|_{L_{[0,t]}^{\infty}L_x^{p_0}}&\lesssim\big{\|}e^{-t\partial_x^3}\big{(}\tilde{u}(0)\big{)}\big{\|}_{L_{[0,t]}^{\infty}L_x^{p_0}}+\|\mathcal{E}\|_{L^{q_2'}_{[0,t]}L_x^{r_2'}}+\big{\|}\big{(}f(\tilde{u})\big{)}_x\big{\|}_{L^{q_2'}_{[0,t]}L_x^{r_2'}}\\
&=I+II+III.
\end{split}\end{equation}
We let $\sigma_0=\frac{1}{2}-\frac{1}{p_0}(=\frac{1}{10})$, then by Sobolev embedding:
\begin{equation}\label{62}
I\lesssim\big{\|}e^{-t\partial_x^3}\big{(}\tilde{u}(0)\big{)}\big{\|}_{L_{[0,t]}^{\infty}\dot{H}^{\sigma_0}}=\frac{1}{\lambda(0)^{\frac{2}{p-1}-\frac{1}{p_0}}}\|\varepsilon(0)\|_{\dot{H}^{\sigma_0}}\leq \frac{b_c^{10}}{\lambda(t)^{\frac{2}{p-1}-\frac{1}{p_0}}}.
\end{equation}
For $II$, from \eqref{APP}, \eqref{BS1}, \eqref{BS3}, \eqref{MES1}, \eqref{MES2} and \eqref{MES3}, there holds for all $ \tau\in[0,t]$:
\begin{equation*}\begin{split}
\|\mathcal{E}(\tau)\|_{L^{r_2'}_x}&=\frac{1}{\lambda(\tau)^{2+\frac{2}{p-1}+\frac{1}{r_2}}}\bigg{\|}-\Phi_b+b_sP_b-\bigg{(}\frac{\lambda_s}{\lambda}+b\bigg{)}\Lambda Q_b-\bigg{(}\frac{x_s}{\lambda}-1\bigg{)}Q_b'\bigg{\|}_{L^{r_2'}}\\
&\lesssim \frac{1}{\lambda(\tau)^{2+\frac{2}{p-1}+\frac{1}{r_2}}}\bigg{(}\|\Phi_b\|_{L^{r_2'}}+b_c^{\frac{5}{2}}\|P_b\|_{L^{r_2'}}+\mathcal{N}^{\frac{1}{2}}+b_c^{\frac{5}{2}}\bigg{)}\\
&\lesssim \frac{b_c^{1+\frac{1}{p_0}-\delta}}{\lambda(\tau)^{2+\frac{2}{p-1}+\frac{1}{r_2}}}.
\end{split}\end{equation*}
From \eqref{48} we obtain:
\begin{equation}\label{63}
II\lesssim \Bigg{(}\int_0^t\bigg{(}\frac{b_c^{1+\frac{1}{p_0}-\delta}}{\lambda(\tau)^{2+\frac{2}{p-1}+\frac{1}{r_2}}}\bigg{)}^{q_2'}d\tau\Bigg{)}^{\frac{1}{q_2'}}\lesssim \frac{b_c^{\frac{p_0+1}{3p_0}-\frac{2\delta}{3}}}{\lambda(t)^{\frac{2}{p-1}-\frac{1}{p_0}}}= \frac{b_c^{\frac{7}{15}-\frac{2\delta}{3}}}{\lambda(t)^{\frac{2}{p-1}-\frac{1}{p_0}}}.
\end{equation}
Finally we deal with $III$. For all $\tau\in[0,t]$, there holds:
\begin{equation}\label{6001}\begin{split}
\big{\|}(f(\tilde{u}))_x\big{\|}_{L^{r_2'}}&=\frac{1}{\lambda(\tau)^{2+\frac{2}{p-1}+\frac{1}{r_2}}}\Big{\|}\big{(}(Q_b+\varepsilon)|Q_b+\varepsilon|^{p-1}-Q_b^p\big{)}_y\Big{\|}_{L^{r_2'}}\\
&\lesssim \frac{1}{\lambda(\tau)^{2+\frac{2}{p-1}+\frac{1}{r_2}}}\Big{(}\|\varepsilon_yQ_b^{p-1}\|_{L^{r_2'}}+\big{\|}\varepsilon_y|\varepsilon|^{p-1}\big{\|}_{L^{r_2'}}\\
&\qquad\qquad\qquad+\big{\|}\varepsilon(Q_b)_y|Q_b|^{p-2}\big{\|}_{L^{r_2'}}+\big{\|}(Q_b)_y|\varepsilon|^{p-1}\big{\|}_{L^{r_2'}}\Big{)}.
\end{split}\end{equation}
We estimate these terms separately. First from \eqref{BS4}, \eqref{BS5} and \eqref{CD2} we have:
\begin{align*}
&\big{\|}\varepsilon(Q_b)_y|\varepsilon|^{p-2}\big{\|}_{L^{r_2'}}\leq \|\varepsilon\|_{L^{r_2'(p-1)}}^{p-1}\leq b_c^{\frac{3}{2}},\\
&\|\varepsilon_y|\varepsilon|^{p-1}\|_{L^{r_2'}}\leq \|\varepsilon_y\|_{L^2}\|\varepsilon\|_{L^{r(p-1)}}^{p-1}\leq b_c^{\frac{3}{2}},
\end{align*}
where $$\frac{1}{r_2'}=\frac{1}{2}+\frac{1}{r}.$$
Next, by using the bootstrap bound \eqref{BS3}, \eqref{BS5} and the decay property of $Q_b$, we have:
\begin{align*}
&\quad\;\|\varepsilon_yQ_b^{p-1}\|_{L^{r_2'}}\\
&=\Bigg{(}\int_{y<-\kappa B}|\varepsilon_y|^{r_2'}Q_b^{r_2'(p-1)}+\int_{|y|<\kappa B}|\varepsilon_y|^{r_2'}Q_b^{r_2'(p-1)}+\int_{y>\kappa B}|\varepsilon_y|^{r_2'}Q_b^{r_2'(p-1)}\Bigg{)}^{\frac{1}{r_2'}}\\
&\lesssim \|\varepsilon_y\|_{L^2}\|Q_b\|_{L^{r(p-3)}}^{p-3}\|Q_b\|^2_{L^{\infty}(|y|>\kappa B)}+\|\varepsilon_y\|_{L^2({|y|}<\kappa B)}\|Q_b\|_{L^{r(p-1)}}^{p-1}\\
&\lesssim b_c^{\frac{3}{2}}.
\end{align*}
The same estimate holds for $\|\varepsilon(Q_b)_y|Q_b|^{p-2}\|_{L^{r_2'}}$.

Injecting all the above estimates into \eqref{6001} yields:
$$\big{\|}(f(\tilde{u}))_x\big{\|}_{L^{r_2'}}\lesssim \frac{b_c^{\frac{3}{2}}}{\lambda(t)^{2+\frac{2}{p-1}+\frac{1}{r_2}}}.$$
By a similar argument we have:
\begin{equation}\label{64}
III\lesssim \frac{b_c^{\frac{1}{2}}}{\lambda(t)^{\frac{2}{p-1}-\frac{1}{p_0}}}.
\end{equation}
Injecting \eqref{62}, \eqref{63} and \eqref{64} into \eqref{RSE}, we obtain \eqref{61}, provided that $\delta$ is small enough (since $\frac{1}{2}>\frac{7}{15}>\frac{13}{28}$).

This concludes the proof of Proposition \ref{BS} (Recall  we have proved \eqref{BB5} in Lemma 4.1).
\subsection{ Proof of Theorem \ref{MT}}
We are now in position to prove Theorem \ref{MT}.

Pick a $\nu>0$ small enough and a $p\in(5,p^*(\nu))$. For all $ u_0\in \mathcal{O}_p$, we choose $b^*(p)=b_c$ and denote $u(t)$ the corresponding solution to the Cauchy problem \eqref{CP} with maximal lifetime $T$. Proposition \ref{BS} implies that $u(t)$ satisfies the geometrical decomposition introduced in Section 2 on $[0,T)$:
$$u(t,x)=\frac{1}{\lambda(t)^{\frac{2}{p-1}}}(Q_{b(t)}+\varepsilon(t))\bigg{(}\frac{x-x(t)}{\lambda(t)}\bigg{)},$$
and the bounds in Proposition \ref{BS} hold on $[0,T)$. From \eqref{ECL1}, we have \eqref{11} and \eqref{12}.
\subsubsection*{{\bf Step 1}}{\it Finite time blow-up and self-similar rate}.

From \eqref{MES1} we have:
\begin{equation}\label{65}\forall t\in[0,T),\quad (1-\nu^2)b_c\leq -\lambda_t\lambda^2\leq(1+\nu^2)b_c.\end{equation}
Integrating it from $0$ to $t$ yields:
$$\forall t\in[0,T),\quad(1-\nu^2)b_ct\leq\frac{1}{3}\lambda^3(0)\text{ and hence }T\leq\frac{\lambda^3(0)}{3b_c(1-\nu^2)}<+\infty.$$
So the solution blows up in finite time. From $H^1$ Cauchy theory we have:
$$\|u_x(t)\|_{L^2}\rightarrow +\infty \text{ as } t\rightarrow T,$$ which implies $\lambda(t)\rightarrow 0$ as $t\rightarrow T$. We thus integrate \eqref{65} from $t$ to $T$ to obtain:
$$\forall t\in[0,T],\quad (1-\nu^2)b_c(T-t)\leq\frac{\lambda^3(t)}{3}\leq(1+\nu^2)b_c(T-t),$$
which implies \eqref{14}.
\subsubsection*{{\bf Step 2}}{\it Convergence of the blow-up point}.

From \eqref{MES2} we have:
$$|x_t|=\frac{1}{\lambda^2}\bigg{|}\frac{x_s}{\lambda}\bigg{|}\leq \frac{1+\nu^2}{\lambda^2}.$$
Thus from \eqref{14}, we get:
$$\int_0^T|x_t|\leq \int_0^T \frac{1+\nu^2}{\big{(}(1-\nu^2)b_c(T-t)\big{)}^{\frac{2}{3}}}\leq(1+\nu)\frac{\lambda(0)}{b_c}<+\infty,$$
and then \eqref{13} follows.
\subsubsection*{{\bf Step 3}}{\it Strong convergence in $L^q$}.

Fix a $q\in[2,\frac{2}{1-2\sigma_c})$, and let $0<\tau\ll T$ and $0<t<T-\tau$, let $u_{\tau}(t)=u(t+\tau)$ and $v_{\tau}(t')=u_{\tau}(t')-u(t')$ for all $ t'\in[t,T-\tau)$. Then $v_{\tau}$ satisfies:
$$\partial_{t'} v_{\tau}+\partial_{xxx}v_{\tau}=\big{(}u|u|^{p-1}-u_{\tau}|u_{\tau}|^{p-1}\big{)}_x.$$
Let $\sigma_1=\frac{1}{2}-\frac{1}{q}$, and chose $\tilde{q}$ and $\tilde{r}$, such that $(+\infty,q)$ and $(\tilde{q},\tilde{r})$ satisfy the conditions in Corollary \ref{RSEF}. Then we have:
\begin{equation*}\begin{split}
\big{\|}(u|u|^{p-1})_x\big{\|}_{L_x^{\tilde{r}'}}&=\frac{1}{\lambda^{2+\frac{1}{q}+\frac{1}{\tilde{r}}+\sigma_1-\sigma_c}}\Big{\|}\big{(}(Q_b+\varepsilon)|Q_b+\varepsilon|^{p-1}\big{)}_y\Big{\|}_{L^{\tilde{r}'}}\\
&\lesssim \frac{1}{\lambda^{2+\frac{1}{q}+\frac{1}{\tilde{r}}+\sigma_1-\sigma_c}}\Big{(}\big{(}\|(Q_b)_y\|_{L^2}+\|\varepsilon_y\|_{L^2}\big{)}\big{(}\|Q_b\|_{L^{r_0}}^{p-1}+\|\varepsilon\|_{L^{r_0}}^{p-1}\big{)}\Big{)}\\
&\lesssim \frac{1}{\lambda^{2+\frac{1}{q}+\frac{1}{\tilde{r}}+\sigma_1-\sigma_c}},
\end{split}\end{equation*}
where $$\frac{1}{\tilde{r}'}=\frac{1}{2}+\frac{p-1}{r_0}.$$
Since $\sigma_1<\sigma_c$ and $\lambda(t)\sim\sqrt[3]{3b_c(T-t)}$, we conclude:
\begin{align*}
&\quad\Big{\|}\big{(}u|u|^{p-1}-u_{\tau}|u_{\tau}|^{p-1}\big{)}_x\Big{\|}_{L^{\tilde{q}'}_{[t,T-\tau)}L_x^{\tilde{r}'}}\\
&\lesssim \bigg{(}\int_t^T\bigg{(}\frac{1}{\lambda(t')^{2+\frac{1}{q}+\frac{1}{\tilde{r}}+\sigma_1-\sigma_c}}\bigg{)}^{\tilde{q}'}dt'\bigg{)}^{\frac{1}{\tilde{q}'}}\\
&\lesssim \frac{1}{b_c^2}(T-t)^{\frac{\sigma_c-\sigma_1}{3}}\rightarrow 0,\text{ as } t\rightarrow T, \text{ uniformly in } \tau.
\end{align*}
\begin{remark}Here we can see the case $q=q_c$ (i.e. $\sigma_1=\sigma_c$) will lead to a logarithm on the upper bound of the critical norm, therefore the strong convergence can't exist in the critical space.
\end{remark}

Next from the refined Strichartz estimate \eqref{RSF} and Sobolev embedding we have:
\begin{equation}\label{601}
\|v_{\tau}\|_{L^{\infty}_{[t,T-\tau)}L^q_x}\lesssim \|v_{\tau}(t)\|_{\dot{H}^{\sigma_1}}+\bigg{(}\int_t^T\bigg{(}\frac{1}{\lambda(t')^{2+\frac{1}{q}+\frac{1}{\tilde{r}}+\sigma_1-\sigma_c}}\bigg{)}^{\tilde{q}'}dt'\bigg{)}^{\frac{1}{\tilde{q}'}}.
\end{equation}
We claim \eqref{601} implies that $u(t)$ is a Cauchy sequence in $L^q$ as $t\rightarrow T$. Indeed, for all $\epsilon>0$, we can choose a $t_{\epsilon}$ close enough to $T$, such that:
$$\bigg{(}\int_{t_{\epsilon}}^T\bigg{(}\frac{1}{\lambda(t')^{2+\frac{1}{q}+\frac{1}{\tilde{r}}+\sigma_1-\sigma_c}}\bigg{)}^{\tilde{q}'}dt'\bigg{)}^{\frac{1}{\tilde{q}'}}\leq \frac{\epsilon}{2C_0},$$
where $C_0$ is the implicit constant in \eqref{601}.  From $H^1$ Cauchy theory i.e. $u(t)\in C([0,T),H^1)$, there exists a $\tau_0=\tau_0(t_{\epsilon})\in(0,T-t_{\epsilon})$, such that for all $0<\tau\leq \tau_0$,
$$\|v_{\tau}(t_{\epsilon})\|_{\dot{H}^{\sigma_1}}\leq \frac{\epsilon}{2C_0}.$$
Choose a $t_0<T$ such that $T-t_0<\tau_0$. Then for all $t_1,t_2\in(t_0,T)$, $t_1<t_2$, let $\tau=t_2-t_1$. From the above discussion, we have:
$$\|u(t_2)-u(t_1)\|_{L^q}=\|v_{\tau}(t_1)\|_{L^q}\leq \|v_{\tau}\|_{L^{\infty}_{[t_{\epsilon},T-\tau)}L_x^q}\leq \epsilon,$$
which means $u(t)$ is a Cauchy sequence in $L^q$ as $t\rightarrow T$. Hence, we have proven \eqref{15}.
\subsubsection*{{\bf Step 4}}{\it Singular behavior of the asymptotic profile}.

Finally, we give the proof of \eqref{16}. Let
\begin{equation}A=b_c^{-\frac{21}{20}}, \quad R(\tau)=A\lambda(\tau) \text{ for all } \tau\in[t,T),\end{equation}
where $t$ is a fixed time close enough to $T$. Then we choose a smooth cut-off function $\chi$, with $\chi(y)=0$ if $|y|>2$, $\chi(y)=1$ if $|y|<1$. Denote
$$g(x)=\chi\bigg{(}\frac{x-x(T)}{R(t)}\bigg{)}.$$
Then by Kato's localized identity for mass, we can estimate:
\begin{align*}
&\quad\;\Bigg{|}\frac{d}{d\tau}\int u^2(\tau)g\Bigg{|}=\Bigg{|}-3\int u_x^2(\tau)g_x+\int u^2(\tau)g_{xxx}+\frac{2p}{p+1}\int |u(\tau)|^{p+1}g_x\Bigg{|}\\
&\lesssim\frac{1}{R(t)}\bigg{(}\int|u_x(\tau)|^2+|u(\tau)|^{p+1}\bigg{)}+\frac{1}{R(t)^3}\bigg{|}\int\chi'''\bigg{(}\frac{x-x(T)}{R(t)}\bigg{)}u^2(\tau)\bigg{|}\\
&\lesssim\frac{1}{R(t)}\frac{1}{\lambda(\tau)^{2-2\sigma_c}}\bigg{(}\int\big{|}(\varepsilon+Q_b)_y\big{|}^2+|\varepsilon+Q_b|^{p+1}\bigg{)}+\frac{1}{R(t)^2}\|u(\tau)\|^2_{L^{\infty}}\\
&\lesssim\frac{1}{R(t)}\frac{1}{\lambda(\tau)^{2-2\sigma_c}}+\frac{1}{R(t)^2}\frac{1}{\lambda(\tau)^{1-2\sigma_c}}\big{(}\|Q_b\|^2_{L^{\infty}}+\|\varepsilon\|^2_{L^{\infty}}\big{)}\\
&\lesssim\frac{1}{R(t)}\frac{1}{\lambda(\tau)^{2-2\sigma_c}}+\frac{1}{R(t)^2}\frac{1}{\lambda(\tau)^{1-2\sigma_c}}.
\end{align*}
Since $u(\tau)$ converges to $u^*$ in $L^2$ as $\tau\rightarrow T$, we can integrate the above inequality from $t$ to $T$ (with respect to $\tau$) and use the fact that (which follows from (4.4)):
$$\text{for } \beta<3,\quad \int_{t}^T\frac{d\tau}{\lambda(\tau)^{\beta}}\leq -2\int_t^T\frac{\lambda_t(\tau)}{b_c\lambda(\tau)^{\beta-2}}d\tau=\frac{2\lambda(t)^{3-\beta}}{b_c(3-\beta)}$$
to obtain:
\begin{equation}\label{66}\begin{split}
&\frac{1}{\lambda(t)^{2\sigma_c}}\Bigg{|}\int\chi\bigg{(}\frac{x-x(T)}{R(t)}\bigg{)}|u^*|^2-\int\chi\bigg{(}\frac{x-x(T)}{R(t)}\bigg{)}u^2(t)\Bigg{|}\\
\lesssim & \frac{1}{A\lambda(t)^{1+2\sigma_c}}\int_t^T\frac{d\tau}{\lambda(\tau)^{2-2\sigma_c}}+\frac{1}{A^2\lambda(t)^{2+2\sigma_c}}\int_t^T\frac{d\tau}{\lambda(\tau)^{1-2\sigma_c}}\\
\lesssim & \frac{1}{b_cA}=b_c^{\frac{1}{20}}.
\end{split}
\end{equation}
On the other hand we have from the geometrical decomposition \eqref{GD}:
\begin{equation}\label{67}\begin{split}
&\quad\frac{1}{\lambda(t)^{2\sigma_c}}\int\chi\bigg{(}\frac{x-x(T)}{R(t)}\bigg{)}|u(t)|^2\\
&=\int\chi\Bigg{[}\frac{1}{A}\bigg{(}y+\frac{x(t)-x(T)}{\lambda(t)}\bigg{)}\bigg{]}|Q_{b}+\varepsilon|^2dy.
\end{split}\end{equation}
From the properties of $x(t)$ and $\lambda(t)$, we know that:
$$-\frac{x(t)-x(T)}{\lambda(t)}\sim\frac{1}{b_c}\ll A.$$
Together with Lemma \ref{SSP} and \eqref{CD2} we have:
\begin{gather*}
\int\chi\Bigg{[}\frac{1}{A}\bigg{(}y+\frac{x(t)-x(T)}{\lambda(t)}\bigg{)}\bigg{]}\varepsilon^2\lesssim A\|\varepsilon\|_{L^{\infty}}^2\leq Ab_c^{\frac{149}{135}}\leq b_c^{\frac{1}{20}},\\
\int\chi\Bigg{[}\frac{1}{A}\bigg{(}y+\frac{x(t)-x(T)}{\lambda(t)}\bigg{)}\bigg{]}|Q_b|^2=\big{(}1+\delta_0(p)\big{)}\int |\mathcal{Q}_p|^2.
\end{gather*}
with $\delta_0(p)\rightarrow 0$ as $p\rightarrow 5$. Injecting these 2 estimates and \eqref{67} into \eqref{66}, yields:
\begin{align*}
\frac{1}{R(t)^{2\sigma_c}}\int\chi\bigg{(}\frac{x-x(T)}{R(t)}\bigg{)}|u^*|^2&=\frac{1}{A^{2\sigma_c}}\int|\mathcal{Q}_p|^2\big{(}1+\delta(p)\big{)}+O(b_c^{\frac{1}{40}})\\
&=\big{(}1+\delta(p)\big{)}\int|\mathcal{Q}_p|^2.
\end{align*}
with $\lim_{p\rightarrow5}\delta(p)=0$. Let $t\rightarrow T$, i.e. $R(t)\rightarrow0$, then \eqref{16} follows.

Finally, it is immediately seen from \eqref{16} that:
$$u^*\notin L^{\frac{2}{1-2\sigma_c}},$$
which concludes the proof of Theorem \ref{MT}.

\appendix
\section{Proof of \eqref{COF}.}
The coercivity result of $\mathcal{F}$ i.e. \eqref{COF}, follows from the following lemma%
\footnote{See for example, Lemma 2.1 in \cite{MMR1}.}%
:
\begin{lemma}[Coercivity of $L$]
There exists a constant $\kappa_0>0$ such that for all $f\in H^1$, there holds:
\begin{equation}\label{A1}
(Lf,f)\geq \kappa_0\|f\|_{H^1}^2-\frac{1}{\kappa_0}\Big{[}(f,\mathcal{Q}_p)^2+(f,\Lambda \mathcal{Q}_p)^2+(f,y\Lambda\mathcal{Q}_p)^2\Big{]}.
\end{equation}
\end{lemma}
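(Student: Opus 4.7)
The plan is a contradiction-and-compactness argument, reducing the estimate to the spectral picture of $L$ together with a parity-based rigidity step on the three orthogonality directions. I would assume the inequality fails; then for each integer $n$ there exists $f_n \in H^1$ with $\|f_n\|_{H^1} = 1$ such that
\[
(Lf_n, f_n) + n\bigl[(f_n,\mathcal{Q}_p)^2 + (f_n,\Lambda \mathcal{Q}_p)^2 + (f_n,y\Lambda \mathcal{Q}_p)^2\bigr] < \tfrac{1}{n}.
\]
In particular $\limsup (Lf_n, f_n) \leq 0$ and each of the three scalar products tends to zero.

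Extract a weakly convergent subsequence $f_n \rightharpoonup f_\infty$ in $H^1$. Using the exponential decay of $\mathcal{Q}_p^{p-1}$ together with Rellich compactness on bounded intervals, the potential term passes to the limit: $\int p\mathcal{Q}_p^{p-1} f_n^2 \to \int p\mathcal{Q}_p^{p-1} f_\infty^2$. Combined with weak lower semicontinuity of $\|\cdot\|_{H^1}$ this gives $(Lf_\infty, f_\infty) \leq \liminf (Lf_n, f_n) \leq 0$, and the three orthogonality conditions are inherited by $f_\infty$.

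The next step is spectral. The self-adjoint Schr\"odinger operator $L = -\partial_y^2 + 1 - p\mathcal{Q}_p^{p-1}$ has essential spectrum $[1, \infty)$, a simple zero eigenvalue spanned by the odd function $\mathcal{Q}_p'$, and a unique simple negative eigenvalue $-\lambda_0 < 0$ whose eigenfunction $\chi_0 > 0$ is even (by Sturm oscillation, since $\mathcal{Q}_p'$ has exactly one sign change it is the second bound state). Hence $(Lf_\infty, f_\infty) \leq 0$ forces $f_\infty = a\chi_0 + b\mathcal{Q}_p'$. Parity then splits the three orthogonality conditions cleanly, as $\mathcal{Q}_p$ and $\Lambda \mathcal{Q}_p$ are even while $y\Lambda \mathcal{Q}_p$ is odd. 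The even sector gives $a(\chi_0, \mathcal{Q}_p) = 0$, so $a = 0$ since $\chi_0, \mathcal{Q}_p > 0$. The odd sector gives $b(\mathcal{Q}_p', y\Lambda \mathcal{Q}_p) = 0$, and a direct integration by parts using $\Lambda \mathcal{Q}_p = \tfrac{2}{p-1}\mathcal{Q}_p + y\mathcal{Q}_p'$ yields
\[
(\mathcal{Q}_p', y\Lambda \mathcal{Q}_p) = \|\Lambda \mathcal{Q}_p\|_{L^2}^2 + \tfrac{p-5}{(p-1)^2}\|\mathcal{Q}_p\|_{L^2}^2 > 0
\]
for $p$ close to $5$, forcing $b = 0$. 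Thus $f_\infty \equiv 0$, so $\int p\mathcal{Q}_p^{p-1} f_n^2 \to 0$ and therefore $(Lf_n, f_n) = \|f_n\|_{H^1}^2 - \int p\mathcal{Q}_p^{p-1} f_n^2 \to 1$, contradicting $\limsup (Lf_n, f_n) \leq 0$.

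I expect the main obstacle to be the spectral step: pinning down the low-lying structure of $L$ (one simple negative eigenvalue with even ground state, simple odd kernel), and verifying that the three prescribed directions genuinely cover the two-dimensional non-positive spectral subspace. The parity decomposition makes this essentially a $2 \times 2$ nondegeneracy check; one sign comes from positivity $(\chi_0, \mathcal{Q}_p) > 0$, and the other from the explicit identity $(\mathcal{Q}', y\Lambda \mathcal{Q}) = \|\Lambda \mathcal{Q}\|_{L^2}^2$ at $p=5$, which is transparent and persists by continuity for $p$ close to $5$.
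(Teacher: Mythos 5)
The paper does not prove this lemma itself (it only cites Lemma 2.1 of \cite{MMR1}), so an independent argument is welcome, and your compactness setup and the nondegeneracy computations (in particular $(\mathcal{Q}_p', y\Lambda \mathcal{Q}_p) = \|\Lambda \mathcal{Q}_p\|_{L^2}^2 + \tfrac{p-5}{(p-1)^2}\|\mathcal{Q}_p\|_{L^2}^2 > 0$, which I checked) are correct. However, there is a genuine gap in the spectral step: the inference that $(Lf_\infty, f_\infty) \le 0$ forces $f_\infty \in \text{span}(\chi_0, \mathcal{Q}_p')$ is false. The set $\{f : (Lf,f) \le 0\}$ is a cone, not a linear subspace. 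For instance, take $f = \chi_0 + g$ with $g \perp \chi_0, \mathcal{Q}_p'$ (spectrally) and $(Lg,g) < \lambda_0\|\chi_0\|_{L^2}^2$; then $(Lf,f) = -\lambda_0\|\chi_0\|_{L^2}^2 + (Lg,g) < 0$, yet $f \notin \text{span}(\chi_0,\mathcal{Q}_p')$. In fact your own setup contradicts the conclusion $f_\infty \equiv 0$: since $(Lf_n,f_n) = \|f_n\|_{H^1}^2 - \int p\mathcal{Q}_p^{p-1}f_n^2 = 1 - \int p\mathcal{Q}_p^{p-1}f_n^2$ and $\limsup (Lf_n,f_n) \le 0$, local compactness gives $\int p\mathcal{Q}_p^{p-1}f_\infty^2 = \lim\int p\mathcal{Q}_p^{p-1}f_n^2 \ge 1$, so $f_\infty \not\equiv 0$ necessarily.

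What is actually required is the positivity of the quadratic form $(L\cdot,\cdot)$ on the $L^2$-orthogonal complement of $\text{span}(\mathcal{Q}_p, \Lambda\mathcal{Q}_p, y\Lambda\mathcal{Q}_p)$, i.e. that a nonzero $f_\infty$ satisfying the three orthogonality conditions has $(Lf_\infty,f_\infty) > 0$. This cannot be read off from the index of $L$ alone; one must perform the spectral decomposition $f_\infty = a\chi_0 + b\mathcal{Q}_p' + g$ with $g$ spectrally orthogonal to $\chi_0,\mathcal{Q}_p'$, and then use the orthogonality relations together with an angle/nondegeneracy argument (in the spirit of Weinstein and Grillakis--Shatah--Strauss) to bound $\lambda_0 a^2$ strictly below $(Lg,g)$. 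This is delicate precisely for $p>5$: there $(\Lambda\mathcal{Q}_p,\mathcal{Q}_p) = \tfrac{5-p}{2(p-1)}\|\mathcal{Q}_p\|_{L^2}^2 < 0$, so $(L^{-1}\mathcal{Q}_p,\mathcal{Q}_p) = -\tfrac{1}{2}(\Lambda\mathcal{Q}_p,\mathcal{Q}_p) > 0$ and the classical criterion for nonnegativity of $L$ on $\{\mathcal{Q}_p\}^\perp$ actually fails; one must exploit the additional orthogonality to $\Lambda\mathcal{Q}_p$ together with the identity $L\Lambda\mathcal{Q}_p = -2\mathcal{Q}_p$. Once that positivity is established, your contradiction closes (with $f_\infty \neq 0$, not $f_\infty = 0$); but as written, the spectral step asserts something untrue and the argument does not stand.
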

Now we can prove \eqref{COF} by using Lemma A.1, orthogonality condition \eqref{OC} and a localization argument:

Choose a smooth function $\eta_0$ such that $\eta_0(y)=1$, if $y<\kappa$, $\eta_0(y)=e^{-y}$ if $y>1$ and $\eta'_0(y)\leq 0$ for all $y$. Let $$\Psi_B(y)=\psi_B(y)\eta_0(\frac{y}{B}).$$
Then we apply \eqref{A1} for $f=\varepsilon\sqrt{\Psi_B}$. We compute every term in \eqref{A1} separately:\\
First, from \eqref{APOW} and the definition of $\psi$ and $\varphi$ we have for all $y\leq \kappa B$, $$\psi_B(y)\leq (1+3\kappa)\varphi_B(y).$$
By the same strategy as in Section 5, we obtain:
\begin{align*}
(Lf,f)&=\int \varepsilon^2_y\Psi_B+\varepsilon^2\Psi_B-p\Psi_B\mathcal{Q}_p^{p-1}\varepsilon^2+\int\varepsilon^2\frac{(\Psi_B)_y^2}{4\Psi_B}-\frac{1}{2}\int \varepsilon^2(\Psi_B)_{yy}\\
&\leq \int_{y\leq \kappa B}\big{(}\varepsilon^2_y+\varepsilon^2-p\mathcal{Q}_p^{p-1}\varepsilon^2\big{)}\psi_B+O(\frac{1}{B})\int_{y<-\kappa B}\big{(}\varepsilon^2_y+\varepsilon^2\big{)}\psi_B\\
&\quad+C\int_{y>\kappa B}\big{(}\varepsilon_y^2+\varepsilon^2e^{-\frac{y}{B}}\big{)}\\
&\leq \int_{y\leq \kappa B}\big{(}\varepsilon^2_y\psi_B+\varepsilon^2\varphi_B-p\psi_B\mathcal{Q}_p^{p-1}\varepsilon^2\big{)}
+C\kappa\int_{y\leq \kappa B}\varepsilon^2\varphi_B\\
&\quad+O(\frac{1}{B})\int_{y<-\kappa B}(\varepsilon_y^2+\varepsilon^2)\varphi_B+Cb_c^{\frac{7}{2}}\\
&\leq \int\big{(}\varepsilon^2_y\psi_B+\varepsilon^2\varphi_B-p\psi_B\mathcal{Q}_p^{p-1}\varepsilon^2\big{)}+Cb_c^{\frac{7}{2}}+C(\kappa B+1)\int_{y<\kappa B}(\varepsilon_y^2+\varepsilon^2)\varphi'_B,
\end{align*}
with some constant $C>0$ independent of $\kappa$ and $B$.\\
Next, a direct computation shows:
\begin{align*}
\kappa_0\|f\|_{H^1}^2&\geq \kappa_0\int_{y\leq\kappa B} (\varepsilon^2_y\Psi_B+\varepsilon^2\Psi_B)-C\int\varepsilon^2\frac{(\Psi_B)_y^2}{4\Psi_B}\\
&\geq \frac{1}{C}\int_{y\leq \kappa B}(\varepsilon^2_y+\varepsilon^2)\psi_B-Cb_c^{\frac{7}{2}}.
\end{align*}
Then, from the orthogonality condition \eqref{OC} we have:
$$|(f,\mathcal{Q}_p)|\lesssim \int_{|y|>\kappa B}|\varepsilon|e^{-|y|}\lesssim e^{-\frac{\kappa B}{2}}\|\varepsilon\|_{L^{\infty}}\lesssim b_c^{10}.$$
The same estimates hold for $(f,\Lambda \mathcal{Q}_p)$ and $(f,y\Lambda\mathcal{Q}_p)$. Injecting all the above estimates into \eqref{A1}, we have:
\begin{equation}\label{A2}
\begin{split}
&\quad\;B\int_{y\leq \kappa B}(\varepsilon^2_y+\varepsilon^2)\varphi'_B\\
&\leq  C\int(\varepsilon^2_y\psi_B+\varepsilon^2\varphi_B-p\psi_B\mathcal{Q}_p^{p-1}\varepsilon^2 )+C(\kappa B+1)\int_{y<\kappa B}\big{(}\varepsilon^2_y+\varepsilon^2\big{)}\varphi'_B+Cb_c^{\frac{7}{2}}\\
&\leq  C\int(\varepsilon^2_y\psi_B+\varepsilon^2\varphi_B-p\psi_B\mathcal{Q}_p^{p-1}\varepsilon^2 )+\frac{B}{2}\int_{y\leq\kappa B}\big{(}\varepsilon^2_y+\varepsilon^2\big{)}\varphi'_B+Cb_c^{\frac{7}{2}},
\end{split}
\end{equation}
provided that $\kappa$ is small enough (We can take $\kappa$ such that it is independent of $b$). Then \eqref{A2} implies \eqref{COF} immediately.

\nocite{*}
\bibliographystyle{amsplain}
\bibliography{ref}
\end{document}